\documentclass[12pt]{article}
\usepackage{times}
\usepackage{lmodern}
\usepackage{color}
\usepackage{amsfonts}
\usepackage{graphicx}
\usepackage[centertags]{amsmath}
\usepackage{amssymb}
\usepackage{amsthm}
\usepackage{newlfont}
\usepackage{mathrsfs}
\usepackage{hyperref}
\usepackage{cases}
\usepackage{amsmath}
\usepackage{bm}

\providecommand{\emailauthor}[2]{}
\providecommand{\corref}[1]{}

\theoremstyle{plain}
\newtheorem{thm}{Theorem}[section]

\newtheorem{lem}[thm]{Lemma}
\newtheorem{prop}[thm]{Proposition}

\theoremstyle{definition}
\newtheorem{defn}{Definition}[section]
\newtheorem{ass}{Assumption}[section]
\newtheorem{rmk}{Remark}[section]

\makeatletter\@addtoreset{equation}{section} \makeatother

\allowdisplaybreaks

\begin{document}

\title{The Backward Stochastic Partial Differential Integral
Equations: Solvability and Comparison Principle\thanks{This paper is supported by the National Key R\&D Program of China (No. 2022YFA1006101), the National Natural Science Foundation of China (Nos. 12271158 and 12371445), and the State Key Laboratory of Cryptography and Digital Economy Security, Shandong University (No. KFZD2505).}}

\date{}

\author{Qingxin Meng\thanks{Department of Mathematical Sciences, Huzhou Normal University, Zhejiang 313000, China. Email: mqx@huznu.edu.cn}\ \ and\ \
  Qi  Zhang\thanks{Corresponding author. School of Mathematical Sciences and Laboratory of Mathematics for Nonlinear Science, Fudan University, Shanghai 200433, China; State Key Laboratory of Cryptography and Digital Economy Security, Shandong University, Jinan 250100, China. Email: qzh@fudan.edu.cn.}
}

\maketitle

\begin{abstract}
The paper is concerned with the well-posedness of backward stochastic partial differential equations with jumps, also called backward stochastic partial differential integral equations. We start from the proof for the existence and uniqueness of solution to backward stochastic evolution equation with jump in the Gelfand triple framework. Then the well-posedness of both weak solution and strong solution to backward stochastic partial differential integral equation is obtained with the Gelfand triple replaced by specific Sobolev spaces. Finally, the comparison principle for backward stochastic partial differential integral equation is proved, which has potential applications in financial mathematics.
\end{abstract}

\textbf{Keywords}: backward stochastic partial differential integral equation, jump process, backward stochastic evolution equation, weak solution, strong solution, comparison principle.

\textbf{MSC2020}: 60H15, 60H20.

\section{Introduction}

The backward stochastic partial differential equation (BSPDE) is a backward stochastic equation which involves both stochastic noise and a partial differential operator. It was introduced by Bensoussan \cite{be1, be2} as the adjoint equation arising in the stochastic optimal control problem with partial observation, and has since been widely applied to financial mathematics, optimal control, stochastic filtering and physical modeling.

The well-posedness and regularity for such terminal value given stochastic equations is a challenging problem since the birth of BSPDEs.
%In 1992, the  estimates of the solutions to a class of BSPDEs in Sobolev space were obtained in Zhou \cite{zho} by a finite dimensional approximation (Galerkin method) and dual analysis of stochastic partial differential %equations, in which the high regularity requirements for coefficients was needed.
Zhou \cite{zh} in 1992 studies a class of linear BSPDEs under degenerate and non-degenerate conditions, respectively, in which the high regularity of coefficients is needed in the non-degenerate case and the equation form is special without the partial differential operator with respect to the solution in diffusion in the degenerate case. Du and Meng \cite{du-me} revisits linear non-degenerate BSPDEs and relaxes the high regularity of coefficients to a certain extent. After that many important progresses on non-degenerate BSPDE emerged in 2010s. Du and Tang \cite{du-ta} establishes the theory for the existence and uniqueness of strong solutions to linear and semilinear BSPDEs on bounded domains with $C^2$ boundaries; Du, Qiu and Tang \cite{du-qi-ta} investigates the $L^p$ theory and comparison principle for semilinear BSPDEs in the whole space; Qiu and Wei \cite{qi-we} studies the well-posedness and regularity of the reflected semilinear BSPDEs; Tang and Wei \cite{ta-we} obtains the solvability of the Cauchy problem for linear and semilinear BSPDEs in H\"{o}lder spaces, to name but a few. As for the degenerate BSPDEs, since the degeneracy brings much difficulty in the estimates of solutions, the results on the regularities of degenerate BSPDEs are relatively fewer than non-degenerate equations, and their regularities, Feynman-Kac formulas and  applications to mathematical finance as a stochastic Black-Scholes formula can be found in Tang \cite{ta}, Ma and Yong \cite{ma-yo1, ma-yo2}, Hu, Ma, and Yong \cite{hu-ma-yo}, Du, Tang and Zhang \cite{du-ta-zh}, Du and Zhang \cite{du-zh}, Horst, Qiu and Zhang \cite{ho-qi-zh}, Yang, Zhang and Zhang \cite{ya-zh-zh}, etc.

On the other hand, the existing results on BSPDEs driven by jump processes, i.e. backward stochastic partial differential integral equations (BSPDIEs), are very few. There are two well-known existing literatures. One is {\O}ksendal, Proske and Zhang \cite{ok-pr-zh} which studies a special type of non-degenerate semilinear BSPDEs with jumps without the partial differential operator with respect to the solution in diffusion. The other is Chen and Tang \cite{ch-ta} which investigates degenerate semilinear BSPDEs with jumps by the inverse mapping of stochastic flows with a high regularity requirement for coefficients.

Besides, there are also some studies on non-degenerate stochastic Hamilton-Jacobi-Bellman (HJB) equations (fully nonlinear BSPDEs), such as Peng \cite{pe}, Qiu \cite{qi} and Meng, Dong, Shi and Tang \cite{me-do-sh-ta}. Among them, \cite{me-do-sh-ta} is concerned with stochastic HJB equation driven by a jump process, but it only discusses weak solutions and does not explore higher regularity of solutions. In fact, the solvability of strong solution to stochastic HJB equation in a general form, even without jump process, is a long-lasting open problem. %It also struggles to effectively handle complex scenarios with non-smooth coefficients. Moreover, the
%$H^1$ space framework for weak solutions exhibits significant limitations in providing a mathematical %foundation for smooth control strategies.

We notice that most of the existing literatures on BSPDIEs suppose some special conditions, such as linear equations, specific L\'{e}vy process drivers, high regularity of coefficients. Moreover, most of well-posedness results focus on weak solutions, or smooth solutions for BSPDIEs in special forms. The results on the existence and uniqueness of strong solutions to BSPDIEs are less known. But the studies on BSPDIEs are theoretically and practically important. For instance, in the study of Pontryagin maximum principle, if the control problem is driven by a L\'{e}vy process, its corresponding adjoint equation is a BSPDIE. Needless to say, the models with jump perturbations in financial mathematics capture real-world market and risk dynamics better, and so BSPDIEs often appear in financial models involving random coefficients and jump processes. For example, in the European option pricing problem whose dynamics of underlying assets is path-dependent and discontinuous, the Black-Scholes equations are BSPDIEs.

Just under this background, our paper is concerned with the existence and uniqueness of strong solutions to BSPDIEs driven by jump processes with random coefficients. Compared to the jump-free case, the jump process brings a non-local integral operator and needs energy estimates and compactness analysis to overcome technical difficulties. For this purpose, we start from proving the well-posedness of solution to backward stochastic evolution equation (BSEE) with jump in the Gelfand triple framework. Then based on variational method, the well-posedness of both weak solution and strong solution in Sobolev spaces
%, i.e. in divergence form and non-divergence form,
is obtained with the Gelfand triple replaced by specific Sobolev spaces. %To be specific, we define the precise solution spaces, %establish a priori estimates by introducing energy inequalities, and prove the existence, uniqueness, and %stability of strong solutions by leveraging Lipschitz continuity conditions and fixed-point methods.
Unlike most existing literatures which require smooth enough assumptions for coefficients or special restrictions on jump terms, our work relaxes the requirements for coefficients and jump terms, and hence our results are applicable to more equations, such as those with degenerate diffusion, non-local jump and lower regularity of terminal value. % and non-homogeneous terms. %with the common Lipschitz and non-degenerate conditions,
%thus adapting to a wider range of application scenarios.
Moreover, the regularity improvements from weak solutions to strong solutions are studied in our paper. To be specific, based on localization techniques and parameter continuation methods, we upgrade the weak solutions to strong solutions by improving the regularities of solutions. To the best of our knowledge, this is the first discussion on regularity improvement of BSPDIEs.

The rest of this paper is organized as follows. In Section 2 we establish the existence and uniqueness of  solution to BSEE with jump in the Gelfand triple framework. Then we apply the results on BSEE to the specific BSPDIE, and get the well-posedness of both weak solution and strong solution to BSPDIE in Section 3. Finally, in Section 4 we prove the comparison principle for BSPDIE which has a potential application to financial mathematics.

\section{Backward stochastic evolution equation with jump in Hilbert spaces}

Let $(\Omega, \mathscr{F}, P)$ be a complete probability space. Suppose that $\{W_t, 0\leq t\leq T\}$ is a standard one-dimensional Brownian motion and $\{\eta_t, 0\leq t\leq T\}$ is a stationary one-dimensional Poisson point process defined on $(\Omega, \mathscr{F}, P)$, and $W$ and $\eta$ are mutually independent. Define $\mathscr{F}_t$, $0\leq t\leq T$, to be the $\sigma$-algebra generated by $\{W_s, 0\leq s\leq t\}$ and $\{\eta_s, 0\leq s\leq t\}$. Let $\mathbb{F}=\{\mathscr{F}_t, 0\leq t\leq T\}$ be the usual augmentation of filtration, and denote by $\mathscr{P}$ the $\mathbb{F}$-predictable $\sigma$-algebra on $\Omega\times [0,T]$.
The Poisson process $\eta$ induces jumps taking values in a fixed nonempty measurable subset $Z\subset \mathbb{R}$ equipped with a finite characteristic measure $\nu$. The associated counting measure $\mu$ is then defined on the product space $[0,T]\times Z$, and the corresponding compensated Poisson martingale measure is given by
\[ \tilde{\mu}(de,dt):= \mu(de,dt)-\nu(de)dt. \]

We introduce some notation used throughout this paper:
\begin{itemize}
  \item $\mathscr{B}(\Lambda)$: the Borel $\sigma$-algebra on a topological space $\Lambda$;
  \item $X$: a separable Hilbert space equipped with norm $\|\cdot\|_X$;
  \item $S_{\mathbb{F}}^2(0,T; X)$: the space of all $\mathbb{F}$-adapted and c\`adl\`ag processes $f: [0,T] \times \Omega \to X$ such that
  \[
  \|f\|_{S_{\mathbb{F}}^2(0,T; X)} := \left( \mathbb{E} \sup_{0 \leq t \leq T} \|f(t)\|_X^2 \right)^{1/2} < \infty;
  \]
  \item $M_{\mathbb{F}}^2(0,T; X)$: the space of all $\mathbb{F}$-adapted processes $f: [0,T] \times \Omega \to X$ such that
  \[
  \|f\|_{M_{\mathbb{F}}^2(0,T; X)} := \left( \mathbb{E} \int_0^T \|f(t)\|_X^2dt \right)^{1/2} < \infty;
  \]
  \item $M^{\nu,2}(Z; X)$: the space of all $\mathscr{B}(Z)$-measurable functions $f: Z \to X$ such that
  \[
  \|f\|_{M^{\nu,2}(Z; X)} := \left( \int_Z \|f(e)\|_X^2\,\nu(de) \right)^{1/2} < \infty;
  \]
  \item $M_{\mathscr{P}}^{\nu,2}([0,T] \times Z; X)$: the space of all $\mathscr{P} \otimes \mathscr{B}(Z)$-measurable processes $f: [0,T] \times Z \times \Omega \to X$ such that
  \[
  \|f\|_{M_{\mathscr{P}}^{\nu,2}([0,T] \times Z; X)} := \left( \mathbb{E} \int_0^T \int_Z \|f(t,e)\|_X^2\,\nu(de)dt \right)^{1/2} < \infty.
  \]
\end{itemize}

Then we introduce the Gelfand triple $(V,H,V^{*})$, where $V$ and $H$ are separable real Hilbert spaces such that $V$ is continuously and densely embedded into the pivot space $H$, and ${V}^{*}$ represents the dual space of $V$. Identifying $H$ with its dual $H^{*}$ via the Riesz representation theorem, we obtain the following embeddings:
\[
V\subset H=H^{*}\subset V^{*}.
\]

Throughout the paper, we adopt the following notations associated with the Gelfand triple:
\begin{itemize}
  \item $\|\cdot\|_{V}$, $\|\cdot\|_{H}$, and $\|\cdot\|_{V^{*}}$ denote the norms in the spaces $V$, $H$, and $V^{*}$, respectively;
  \item $(\cdot,\cdot)_{H}$ denotes the inner product in $H$;
  \item $\langle\cdot,\cdot\rangle$ denotes the duality pairing between $V^{*}$ and $V$;
  \item $\mathcal{L}(V,V^{*})$ denotes the space of bounded linear operators from $V$ to $V^{*}$.
\end{itemize}

The BSEE with jump we study in this paper has a form
\begin{equation} \label{eq:a2}
\begin{aligned}
u(t) &= \varphi + \int_t^T \big[ A(s)u(s) + f(s, u(s), q(s), r(s,\cdot)) + g(s, u(s), q(s), r(s,\cdot)) \big] \, ds \\
&\quad - \int_t^T q(s)\, dW_s - \int_t^T \int_Z r(s,e) \, \tilde \mu(de, ds), \quad t \in [0,T],
\end{aligned}
\end{equation}
where the coefficients are progressively measurable mappings
\[
\begin{aligned}
&A : \Omega \times [0,T] \to \mathscr{L}(V, V^*), \quad \varphi : \Omega \to H, \\
&f : \Omega \times [0,T] \times V \times H \times M^{\nu,2}(Z; H) \to V^*, \\
&g : \Omega \times [0,T] \times V \times H \times M^{\nu,2}(Z; H) \to H.
\end{aligned}
\]

\begin{defn}\label{def:1.1}
A triple process $(u,q,r)\in S_{\mathbb{F}}^2(0,T;H)\cap M_{\mathbb{F}}^2(0,T;V)\times M_\mathbb{F}^2(0,T;H)\times{M}_\mathscr{P}^{\nu,2}([0,T]\times Z;H)$ is called a solution of BSEE \eqref{eq:a2} if for any $\eta\in V$ and all $t\in[0,T]$, it holds that
\begin{eqnarray*}\label{eq:a3}
(u(t),\eta)_H &=& (\varphi,\eta)_H + \int_t^T \Big[ \langle A(s)u(s),\eta\rangle + \langle f(s,u(s),q(s),r(s,\cdot)),\eta\rangle \nonumber\\
&& + (g(s,u(s),q(s),r(s,\cdot)),\eta)_H \Big]ds - \int_t^T (q(s), \eta)_H dW_s \nonumber\\
&& - \int_t^T \int_Z (r(s,e),\eta)_H \tilde{\mu}(de,ds)\ \ \ \text{a.s.}
\end{eqnarray*}
\end{defn}

The It\^{o } formula for BSEE in the Gelfand triple setting can refer to the forward stochastic equation case given by Gy\"{o}ngy and Krylov \cite{gyo-kry}.
\begin{lem}\label{lem:c1}
Assume that $v,m,v^{*}$ are three processes defined on $\Omega\times[0,T]$ and valued in $V,H,V^{*}$, respectively. For any $\eta\in V$, $v(t), m(t), \langle\eta, v^{*}(t)\rangle$ are $\mathscr{F}_{t}$-measurable for a.e. $t\in[0,T]$, and $m$ is a strongly c\`{a}dl\`{a}g local martingale. If $\varphi\in L_{\mathscr{F}_{T}}^{2}(\Omega;H)$ and for any $\eta\in V$, $(\omega,t)\in\Omega\times[0,T]$, it holds that
\begin{eqnarray*}
    ( \eta,v(t))_H = ( \eta,\varphi)_H
    + \int_{t}^{T} \langle \eta,v^{*}(s) \rangle ds
    + ( \eta, m(T)-m(t) )_H,
\end{eqnarray*}
then there exists a full-measure set $\Omega'\subset \Omega$ and a process $h$ valued in $H$ such that\\
(i) $h(t)$ is $\mathscr{F}_{t}$-measurable for any $t\in[0,T]$ and strongly c\`{a}dl\`{a}g for any $\omega\in\Omega$;\\
(ii) $h(t)=v(t)$ in the space $H$ for a.s. $\omega\in\Omega$ and a.e. $t\in[0,T]$, and $h(T)=\varphi$ for any $\omega\in\Omega'$;\\
(iii) for any $\omega\in \Omega'$ and $t\in[0,T]$,
\begin{eqnarray*}
    \|h(t)\|_{H}^{2} = \|\varphi\|_{H}^{2} + 2\int_{t}^{T}\langle v^{*}(s), v(s)\rangle ds + 2 \int_{t}^{T}( h(s-), d m(s)) - [m ]_{T} + [m]_{t},
\end{eqnarray*}
where $[m]$ denotes the quadratic variation of the $H$-valued martingale.
\end{lem}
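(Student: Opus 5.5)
The plan is to reduce the backward statement to the forward Itô formula of Gy\"{o}ngy and Krylov \cite{gyo-kry} for processes with c\`{a}dl\`{a}g martingale parts. First rewrite the hypothesis in forward form. Taking $t=0$ and subtracting, we get for every $\eta\in V$
\begin{equation*}
(\eta,v(t))_H=(\eta,v(0))_H-\int_0^t\langle \eta,v^*(s)\rangle ds+(\eta,m(t)-m(0))_H .
\end{equation*}
So $v$ is, in the weak sense, a forward semimartingale with initial value $v(0)=\varphi+\int_0^T v^*(s)ds+m(T)-m(0)$ (an identity in $V^*$), drift $-v^*$ and martingale part $\tilde m:=m-m(0)$. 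The only subtlety is that $v(0)$ need not be $\mathscr{F}_0$-measurable. However, $\mathscr{F}_0$-measurability of the initial datum is not used in the pathwise (deterministic-in-$\omega$) part of the Gy\"{o}ngy--Krylov argument. Alternatively, one can start the forward equation at a small time $t_0>0$ where the identity holds with an $\mathscr{F}_{t_0}$-measurable $v(t_0)\in H$, and then let $t_0\downarrow0$ along a sequence.

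Next apply the forward result on $[t_0,T]$. It yields an $H$-valued, strongly c\`{a}dl\`{a}g, adapted process $h$ with $h=v$ $dP\otimes dt$-a.e. On a full-measure set $\Omega'$ it also gives
\begin{equation*}
\|h(t)\|_H^2=\|h(t_0)\|_H^2-2\int_{t_0}^t\langle v^*(s),v(s)\rangle ds+2\int_{t_0}^t(h(s-),dm(s))+[m]_t-[m]_{t_0}.
\end{equation*}
The standing integrability needed here is implicit in the statement: $\int_0^T\|v\|_V^2+\|v^*\|_{V^*}^2\,ds<\infty$ a.s. Since the hypothesis holds for every $t$, including $t=T$, and $h$ is c\`{a}dl\`{a}g in $H$, weak limits give $h(T)=\varphi$ on $\Omega'$. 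Here we use that $h(t)$ and $v(t)$ agree weakly for all $t$ where the identity holds, and that $V$ is dense in $H$.

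Finally, subtract the identity at $t$ from the one at $T$ and substitute $\|h(T)\|_H^2=\|\varphi\|_H^2$. This gives exactly (iii):
\begin{equation*}
\|h(t)\|_H^2=\|\varphi\|_H^2+2\int_t^T\langle v^*,v\rangle ds+2\int_t^T(h(s-),dm(s))-[m]_T+[m]_t.
\end{equation*}
To cover $t<t_0$, take $t_0=1/n$ and intersect the countably many full-measure sets. Consistency of the versions $h$ on overlaps follows because c\`{a}dl\`{a}g processes that agree a.e. in $t$ agree everywhere.

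The main obstacle is the existence of the c\`{a}dl\`{a}g $H$-valued version together with the energy identity, i.e.\ the content of \cite{gyo-kry}. If one reproves it, the standard route is as follows. Take a dyadic partition of $[0,T]$ and use the algebraic identity $\|a\|^2-\|b\|^2=2(a-b,a)-\|a-b\|^2$ on partition points where $v\in V$. This gives a discrete energy identity, from which one derives a uniform bound on $\sup_t\|v(t)\|_H$ over the partition points. With this bound, one obtains weak c\`{a}dl\`{a}g-ness and then strong c\`{a}dl\`{a}g-ness via continuity of the norm. The delicate point is controlling the quadratic-variation term for a jump martingale; it is handled through the convergence of discrete sums $\sum\|m(t_{i+1})-m(t_i)\|_H^2$ to $[m]$.
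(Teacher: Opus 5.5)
Your route, rewriting the backward identity in forward form and invoking the forward It\^{o} formula of Gy\"{o}ngy and Krylov, is exactly the paper's approach; the paper gives no argument beyond that reference. As written, however, your computation contains two sign errors that cancel. Subtracting the hypothesis at $0$ from the hypothesis at $t$ gives
\[
(\eta,v(t))_H=(\eta,v(0))_H-\int_0^t\langle \eta,v^*(s)\rangle ds-(\eta,m(t)-m(0))_H,
\]
so the forward martingale part is $-(m-m(0))$, not $m-m(0)$. This is as it must be: for BSEE \eqref{eq:a2} one has $m(t)=-\int_0^t q\,dW_s-\int_0^t\int_Z r\,\tilde\mu(de,ds)$. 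The forward energy identity on $[t_0,t]$ therefore carries $-2\int_{t_0}^t(h(s-),dm(s))$, with the bracket term unchanged. Subtracting it at $t$ from the one at $T$ yields precisely the $+2\int_t^T(h(s-),dm(s))$ of (iii). Starting from the forward identity you displayed, the same subtraction gives $-2\int_t^T(h(s-),dm(s))$. So your final display does not follow from the previous one, and it agrees with (iii) only through a second slip.

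A smaller point concerns the start time. The patching over $t_0=1/n$ covers only $(0,T]$, so right-continuity at $0$ and (iii) at $t=0$ are left open. Your alternative claim, that $\mathscr{F}_0$-measurability is never used inside the Gy\"{o}ngy--Krylov argument, is asserted rather than checked. Neither device is needed. Take $t_n\downarrow0$ with $v(t_n)$ $\mathscr{F}_{t_n}$-measurable; the forward identity and right-continuity of $m$ give $(\eta,v(t_n))_H\to(\eta,v(0))_H$ for every $\eta\in V$. Hence $v(0)\in V$ is $\mathscr{F}_{0+}=\mathscr{F}_0$-measurable under the usual augmentation, and the forward result applies directly on $[0,T]$ with initial value $v(0)$. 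You are right that the a.s.\ integrability condition $\int_0^T(\|v\|_V^2+\|v^*\|_{V^*}^2)ds<\infty$ must be added; the statement omits it.
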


To solve BSEE \eqref{eq:a2}, some assumptions are needed.
\begin{ass}\label{ass:1.1}
(A.1) \textbf{(Measurability)}:
The random mapping $A$ is measurable with respect to $\mathscr{P}$, and takes values in $\mathscr{L}(V,V^*)$; the random mappings $f$ and $g$ are measurable with respect to $\mathscr{P} \otimes \mathscr{B}(V \times H \times M^{\nu,2})$, and take values in $V^*$ and $H$, respectively; the random mapping $\varphi$ is measurable with respect to $\mathscr{F}_T$, and takes values in $H$.

(A.2) \textbf{(Integrability)}: The random mappings $(A, f, g, \varphi)$ satisfy
\[
A\in M_{\mathbb{F}}^2(0, T; \mathscr{L}(V, V^*)),
\quad f(\cdot, 0, 0, 0) \in M_{\mathbb{F}}^2(0, T; V^*),
\]
\[
g(\cdot, 0, 0, 0) \in M_{\mathbb{F}}^2(0, T; H),\quad \varphi \in L^2_{\mathscr{F}_T}(\Omega; H).
\]

(A.3) \textbf{(Boundedness)}: The random mapping $A$ is uniformly bounded by a constant $M$, i.e.,
\[
\sup_{(t, \omega) \in [0, T] \times \Omega} \| A(t, \omega) \|_{\mathscr{L}(V, V^*)} \leqslant M.
\]

(A.4) \textbf{(Coercivity)}: For any $\omega \in \Omega$, $t \in [0,T]$ and $u \in V$, there exist constants $\alpha > 0$ and $\lambda \in \mathbb{R}$ such that
\[
\langle A(t)u, u \rangle \leq -\alpha \|u\|_V^2 + \lambda \|u\|_H^2.
\]

(A.5) \textbf{(Lipschitz)}: For any $\omega \in \Omega$, $t \in [0,T]$, $u \in V$ and $(u_1, q_1, r_1), (u_2, q_2, r_2) \in V \times H \times M^{\nu,2}(Z; H)$, there exist constants $l> 0$ and $0 < \kappa < \sqrt{\alpha}$ such that
\[
\| g(t, u_1, q_1, r_1) - g(t, u_2, q_2, r_2) \|_H \leq l \left[ \| u_1 - u_2 \|_V + \| q_1 - q_2 \|_H + \| r_1 - r_2 \|_{M^{\nu,2}(Z; H)} \right],
\]
	\begin{equation*}
		\begin{aligned}	
|\langle f(t, u_1, q_1, r_1) - f(t, u_2, q_2, r_2), u \rangle| \leq &l \| u_1 - u_2 \|_H \| u \|_V + \kappa \| q_1 - q_2 \|_H \| u \|_V\\
&+ l\| q_1 - q_2 \|_H \| u \|_H + \kappa \| r_1 - r_2 \|_{M^{\nu,2}(Z; H)} \| u \|_V.
	\end{aligned}
	\end{equation*}
\end{ass}

The main result of this section is the solvability of BSEE \eqref{eq:a2}.
\begin{thm}\label{thm:1.2}
Assume that Assumption \ref{ass:1.1} holds. Then BSEE \eqref{eq:a2} has a unique solution.
Moreover, let $(A, f, g, \varphi)$ and $(A, \bar f, \bar g, \bar{\varphi})$ be two sets of coefficients satisfying Assumption \ref{ass:1.1}, and $(u, q, r)$ and $(\bar u, \bar q, \bar r)$ be the solutions of BSEE \eqref{eq:a2} with coefficients $(A, f, g, \varphi)$ and $(A, \bar f, \bar g, \bar \varphi)$, respectively, then we have the following estimate
\begin{eqnarray}\label{eq:130}
&& E \sup_{0 \leq t \leq T} \| u(t) - \bar{u}(t) \|_H^2 + E \int_0^T \| u(t) - \bar{u}(t) \|_V^2 \, dt \nonumber \\
&& + E \int_0^T \| q(t) - \bar{q}(t) \|_H^2 \, dt + E \int_0^T \int_Z \| r(t, e) - \bar{r}(t, e) \|_H^2 \nu(de) \, dt \nonumber \\
&\leq& C \left[ E \| \varphi - \bar{\varphi} \|_H^2 + E \int_0^T \| f(t, \bar{u}(t), \bar{q}(t), \bar{r}(t, \cdot)) - \bar{f}(t, \bar{u}(t), \bar{q}(t), \bar{r}(t, \cdot)) \|_{V^*}^2 \, dt \right. \nonumber \\
&& \left. + E \int_0^T \| g(t, \bar{u}(t), \bar{q}(t), \bar{r}(t, \cdot)) - \bar{g}(t, \bar{u}(t), \bar{q}(t), \bar{r}(t, \cdot)) \|_H^2 \, dt \right].
\end{eqnarray}
Here and in the rest of this paper $C$ is a generic constant depending only on given parameters, and we may indicate the parameters it depends on by a bracket. For example, the above $C= C(l, \lambda, \alpha, \kappa, M, T)$ is a constant depending on the parameters $l, \lambda, \alpha, \kappa, M, T$.

In particular, when $(A, \bar f, \bar g, \bar \varphi) = (A, 0, 0, 0)$, it follows from (\ref{eq:130}) that
\begin{eqnarray}\label{eq:131}
&& E \sup_{0 \leq t \leq T} \| u(t) \|_H^2 + E \int_0^T \| u(t) \|_V^2 \, dt + E \int_0^T \| q(t) \|_H^2 \, dt + E \int_0^T \int_Z \| r(t, e) \|_H^2 \nu(de) \, dt \nonumber \\
&\leq& C\left[ E \| \varphi \|_H^2 + E \int_0^T \| f(t, 0, 0, 0) \|_{V^*}^2 \, dt + E \int_0^T \| g(t, 0, 0, 0) \|_H^2 \, dt \right].
\end{eqnarray}
\end{thm}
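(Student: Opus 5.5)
We first prove the a priori estimate \eqref{eq:130}, which gives uniqueness, and then obtain existence by a continuation argument in the parameter multiplying the nonlinearities. \emph{Step 1 (a priori estimate).} Set $\delta u=u-\bar u$, $\delta q=q-\bar q$, $\delta r=r-\bar r$. Then $\delta u$ satisfies an equation of the form required by Lemma \ref{lem:c1}, with
$$v^*=A\delta u+f(u,q,r)-\bar f(\bar u,\bar q,\bar r)+g(u,q,r)-\bar g(\bar u,\bar q,\bar r)$$
and martingale $m(t)=-\int_0^t\delta q\,dW-\int_0^t\int_Z\delta r\,\tilde\mu$. Applying Lemma \ref{lem:c1} to $e^{\beta t}\|\delta u(t)\|_H^2$ (or using the product rule) and taking expectations, the quadratic variation terms $[m]_T-[m]_t$ produce $E\int_t^T e^{\beta s}(\|\delta q\|_H^2+\|\delta r\|_{M^{\nu,2}}^2)ds$ on the left. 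We then split the nonlinear terms as $f(u,q,r)-f(\bar u,\bar q,\bar r)$ plus $\delta f:=f(\bar u,\bar q,\bar r)-\bar f(\bar u,\bar q,\bar r)$, and similarly for $g$. Coercivity (A.4) gives $-2\alpha\|\delta u\|_V^2+2\lambda\|\delta u\|_H^2$. By (A.5), the $f$-difference is bounded by
$$2l\|\delta u\|_H\|\delta u\|_V+2\kappa(\|\delta q\|_H+\|\delta r\|)\|\delta u\|_V+2l\|\delta q\|_H\|\delta u\|_H.$$
The term $2\kappa\|\delta q\|_H\|\delta u\|_V$ is estimated by $\kappa^2\theta^{-1}\|\delta q\|^2+\theta\|\delta u\|_V^2$ with $\theta$ close to $\alpha$. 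Since $\kappa<\sqrt\alpha$ one has $\kappa^2/\alpha<1$, so the $q,r$ terms are absorbed into the quadratic-variation terms with room to spare. This is the key place where the hypothesis $\kappa<\sqrt{\alpha}$ is used, and it must be handled jointly for $q$ and $r$: one has $2\kappa(a+b)c\le \kappa^2(a^2+b^2)/\theta'+2\theta' c^2$, so the constants need care; if necessary we use $(a+b)^2\le 2(a^2+b^2)$ only after checking that the margin suffices, or else treat the pair $(q,r)$ as a single element of $H\times M^{\nu,2}$.

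\emph{Step 1, continued.} The $g$-difference is bounded by $2l(\|\delta u\|_V+\|\delta q\|+\|\delta r\|)\|\delta u\|_H$, and Young's inequality with a small weight on the $V$, $q$, $r$ parts absorbs it, leaving $C\|\delta u\|_H^2$. The source terms give $2\|\delta f\|_{V^*}\|\delta u\|_V\le \epsilon\|\delta u\|_V^2+C\|\delta f\|_{V^*}^2$ and $2\|\delta g\|_H\|\delta u\|_H\le\|\delta u\|_H^2+\|\delta g\|_H^2$. Choosing $\beta$ large (or applying Gronwall) controls $\sup_t E\|\delta u\|_H^2$, $E\int\|\delta u\|_V^2$, $E\int\|\delta q\|^2$ and $E\int\|\delta r\|^2$. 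The $E\sup$ is then obtained by returning to the pathwise identity and estimating the stochastic integral $\int(\delta u(s-),dm)$ with the Burkholder–Davis–Gundy inequality, via $E\sup|\cdot|\le \tfrac12E\sup\|\delta u\|_H^2+CE\int(\|\delta q\|^2+\|\delta r\|^2)$. This yields \eqref{eq:130}, and uniqueness follows by taking identical data.

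\emph{Step 2 (linear case).} When $f=f(t)$ and $g=g(t)$ do not depend on $(u,q,r)$, we solve by Galerkin approximation. Take an orthonormal basis of $H$ contained in $V$ and project onto $n$-dimensional subspaces; this gives finite-dimensional linear BSDEs with jumps, which are solvable by the martingale representation theorem for the filtration generated by $W$ and $\eta$. The estimate of Step 1 holds uniformly in $n$. We extract weak limits in $M^2(V)\times M^2(H)\times M^{\nu,2}_{\mathscr P}$, and since the stochastic integrals are linear and continuous they pass to the limit weakly, so the limit satisfies Definition \ref{def:1.1} in the weak sense. Lemma \ref{lem:c1} then supplies a càdlàg $H$-valued version lying in $S^2$.

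\emph{Step 3 (continuation in the nonlinearities).} For $\rho\in[0,1]$ consider the equation with $\rho(f+g)(u,q,r)+f_0+g_0$, where $f_0,g_0$ are arbitrary sources. Suppose it is solvable for $\rho_0$. For $\rho=\rho_0+\delta$, define the map $(U,Q,R)\mapsto(u,q,r)$ by solving the $\rho_0$-equation with additional source $\delta(f+g)(U,Q,R)$. Estimate \eqref{eq:130}, with its constant independent of $\rho_0$ (the Lipschitz constants of $\rho f$ are still $\le l$ and $\le\kappa$), shows this map is a contraction with factor $C\delta^2\cdot\mathrm{Lip}^2$ in the norm $\big(E\int(\|u\|_V^2+\|q\|_H^2+\|r\|^2)\big)^{1/2}$. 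Hence a fixed $\delta_0$ works at every step, and finitely many steps reach $\rho=1$. The main obstacle is Step 1, specifically absorbing the $\kappa$-terms, which involve $V$-norms of $u$ against $q$ and $r$, so that the resulting constants are uniform.
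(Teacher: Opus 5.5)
Your proposal is correct and follows essentially the same route as the paper. The ingredients match: the stability estimate \eqref{eq:130} via Lemma \ref{lem:c1}, coercivity and (A.5), uniqueness from that estimate, the linear case by Galerkin as in Lemma \ref{lem:1.3}, and continuation in $\rho$ with a step size independent of $\rho_0$. Your worry about the $\kappa$-terms is unfounded: splitting $2\kappa(\|\delta q\|_H+\|\delta r\|)\|\delta u\|_V$ term by term with any $\theta\in(\kappa^2,\alpha)$ already leaves a margin. Your explicit requirement that the $\rho_0$-equation be solvable for arbitrary sources is exactly what the paper's fixed-point map implicitly uses. Finally, \eqref{eq:131}, which you did not mention, follows at once by comparing with the zero data, whose solution is $(0,0,0)$.
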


Before presenting the complete proof of the main theorem, we first investigate the solvability of the following linear BSEE
\begin{equation}\label{eq:1.5}
u(t)=\varphi+\int_t^T [A(s)u(s)+f(s)]ds - \int_t^T q(s)dW_s - \int_t^T\int_Z r(s,e)\,\tilde{\mu}(de,ds),
\end{equation}
where the inhomogeneous term \( f \) is independent of the unknowns \( (u,q,r) \).
\begin{lem} \label{lem:1.3}
Assume that the operator \( A \) satisfies Assumption \ref{ass:1.1}. If $\varphi \in L^2(\Omega,\mathscr{F}_T; H)$ and $f \in M^2_{\mathscr{F}}(0, T; V^*)$, then BSEE \eqref{eq:1.5} admits a unique solution \( (u, q, r) \in M^2_{\mathscr{F}}(0, T; V) \times M^2_{\mathscr{F}}(0, T; H) \times M^2_{\mathscr{F}}(0, T; L^2_\nu(Z; H)) \).
\end{lem}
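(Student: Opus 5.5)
I will prove Lemma \ref{lem:1.3} by a Galerkin approximation combined with the martingale representation theorem for the filtration generated by $W$ and $\eta$, following the classical variational approach for forward SPDEs adapted to the backward setting. Uniqueness comes first and is straightforward. If $(u,q,r)$ and $(\bar u,\bar q,\bar r)$ both solve \eqref{eq:1.5}, their difference $(\delta u,\delta q,\delta r)$ solves the equation with $\varphi=0$ and $f=0$. I apply Lemma \ref{lem:c1} with $v=\delta u$, $v^*=A\delta u$ and
\[
m(t)=-\int_0^t\delta q\,dW-\int_0^t\int_Z\delta r\,\tilde\mu(de,ds).
\]
Taking expectations, using $E([m]_T-[m]_t)=E\int_t^T(\|\delta q\|_H^2+\|\delta r\|_{M^{\nu,2}}^2)ds$ and the coercivity (A.4), I obtain
\[
E\|\delta u(t)\|_H^2+2\alpha E\int_t^T\|\delta u\|_V^2+E\int_t^T\big(\|\delta q\|_H^2+\|\delta r\|^2_{M^{\nu,2}}\big)\le 2\lambda E\int_t^T\|\delta u\|_H^2 .
\]
Gronwall's inequality then gives $\delta u=0$, and hence $\delta q=0$ and $\delta r=0$. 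The stochastic integral term is a true martingale after a standard localization argument.

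\textbf{Existence.} Fix an orthonormal basis $\{e_i\}\subset V$ of $H$ and let $\Pi_n$ be the $H$-orthogonal projection onto $H_n=\mathrm{span}\{e_1,\dots,e_n\}$. I consider the finite-dimensional linear BSDE with jumps
\[
u_n(t)=\Pi_n\varphi+\int_t^T\big[\Pi_nA(s)u_n(s)+\Pi_nf(s)\big]ds-\int_t^Tq_n\,dW-\int_t^T\int_Z r_n\,\tilde\mu(de,ds),
\]
where $\Pi_n$ is extended to $V^*$ via $\Pi_n\xi=\sum_{i\le n}\langle\xi,e_i\rangle e_i$. The driver is linear with coefficient matrix $(\langle A e_j,e_i\rangle)_{i,j}$, which is bounded by $M\max_i\|e_i\|_V^2$. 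Because this driver is Lipschitz, the classical theory of BSDEs with jumps (Tang--Li, or Barles--Buckdahn--Pardoux) gives a unique solution $(u_n,q_n,r_n)$; that theory uses martingale representation for $(W,\tilde\mu)$. Applying Itô's formula in finite dimensions and using $\langle \Pi_nA u_n,u_n\rangle=\langle Au_n,u_n\rangle$ together with (A.4), I get the uniform estimate
\[
\sup_t E\|u_n(t)\|_H^2+E\int_0^T\big(\|u_n\|_V^2+\|q_n\|_H^2+\|r_n\|_{M^{\nu,2}}^2\big)dt\le C\Big(E\|\varphi\|_H^2+E\int_0^T\|f\|_{V^*}^2dt\Big).
\]
Here $\langle f,u_n\rangle\le\epsilon\|u_n\|_V^2+C_\epsilon\|f\|_{V^*}^2$, and the constant $C$ is independent of $n$ because it depends only on $\alpha,\lambda,T$.

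\textbf{Passage to the limit.} This is the step I expect to be the main obstacle. The uniform bound yields a subsequence with $u_n\rightharpoonup u$ weakly in $M^2(0,T;V)$, $q_n\rightharpoonup q$ in $M^2(0,T;H)$ and $r_n\rightharpoonup r$ in $M^{\nu,2}_{\mathscr P}$, and moreover $Au_n\rightharpoonup Au$ in $M^2(0,T;V^*)$ since $A$ is bounded. To pass to the limit in the equation I test against $\eta\,\zeta(t)\mathbf 1_F$ with $\eta=e_i$, a bounded $\zeta$ and $F\in\mathscr F$. The maps $u\mapsto\int_t^T\langle Au,e_i\rangle ds$ and $q\mapsto\int_t^T(q,e_i)dW$ are linear and continuous on these spaces, so they are weakly continuous, and the same holds for the $\tilde\mu$-integral. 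This gives the weak identity of Definition \ref{def:1.1} for $dt\otimes dP$-a.e.\ $t$ and every $\eta\in V$, using density of the span of $\{e_i\}$ in $V$. Predictability and adaptedness survive because the relevant subspaces are closed and convex, hence weakly closed.

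Finally, Lemma \ref{lem:c1} provides a càdlàg $H$-valued version of $u$ that satisfies the identity for every $t$, so that $u\in S^2_{\mathbb F}(0,T;H)$. The $\sup$-estimate follows from the Itô formula of Lemma \ref{lem:c1} combined with the Burkholder--Davis--Gundy inequality applied to $\int(h(s-),dm(s))$. This completes the proof of existence.
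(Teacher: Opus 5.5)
Your proposal is correct and follows essentially the same route as the paper, whose proof gives no details and simply invokes the standard Galerkin approximation with energy estimates and weak convergence from Lemma 5.4 of Meng, Dong, Shi and Tang; your sketch supplies that argument. One point to make explicit: build the $H$-orthonormal basis by Gram--Schmidt from a $V$-dense sequence, because an arbitrary $H$-orthonormal basis contained in $V$ need not have $V$-dense span, and you need that density to extend the limit identity to every $\eta\in V$.
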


\begin{proof}
The existence and uniqueness of solutions follow from a standard Galerkin approximation scheme combined with energy estimates and weak convergence techniques, as rigorously established in Lemma 5.4 of \cite{me-do-sh-ta} for BSEEs with jumps.
\end{proof}

We are ready to prove the main Theorem \ref{thm:1.2} in this section.
\begin{proof}[Proof of Theorem \ref{thm:1.2}]
The stability estimate \eqref{eq:130} follows from the standard energy inequality derived via It\^{o}'s formula, using the Lipschitz continuity of \( f \), \( g \) and the coercivity of \( A \). This argument parallels Theorem 5.2 in \cite{me-do-sh-ta}, where similar techniques are applied to a simplified case and minor adaptations suffice for our generalized setting. The specific case \eqref{eq:131} is obtained by substituting \( \bar{f} = \bar{g} = \bar{\varphi} = 0 \) and \( (\bar{u}, \bar{q}, \bar{r}) = (0, 0, 0) \) into \eqref{eq:130}.

To extend the solvability to the nonlinear case, fix \( f_0 \in {M}_{\mathbb{F}}^2(0, T; V^*) \) and consider the parameterized BSEE for \( \rho \in [0,1) \)
\begin{equation}\label{eq:1.7}
\begin{aligned}
u(t) &= \varphi + \int_t^T \Big[A(s)u(s) + \rho f(s, u(s), q(s), r(s,\cdot)) + \rho g(s, u(s), q(s), r(s,\cdot)) + f_0(s)\Big]ds \\
&\quad - \int_t^T q(s)dW_s - \int_t^T \int_Z r(s,e)\,\tilde{\mu}(de,ds), \quad t \in [0,T].
\end{aligned}
\end{equation}
Under Assumption \ref{ass:1.1}, it is clear that the coefficients $(A, \rho f + f_0, \rho g, \varphi)$ satisfy the conditions of Lemma \ref{lem:1.3} for any $\rho \in [0,1)$, due to the boundedness and Lipschitz continuity of $f,g$ and the fact that $\rho < 1$.

We first assume that for some fixed $\rho_0 \in [0,1)$, BSEE \eqref{eq:1.7} admits a unique solution $(u, q, r)$ in the space
\[
\mathcal{H} := S_{\mathbb{F}}^2(0,T;H) \cap M_{\mathbb{F}}^2(0,T;V) \times M_\mathbb{F}^2(0,T;H) \times M_\mathscr{P}^{\nu,2}([0,T]\times Z;H)
\]
%For notational convenience, we define the norm on $\mathcal{H}$ as:
under the norm
\[
\|(u,q,r)\|_{\mathcal{H}}^2 := E \sup_{0 \leq t \leq T} \| u(t) \|_H^2 + E \int_0^T \| u(t) \|_V^2 \, dt + E \int_0^T \| q(t) \|_H^2 \, dt + E \int_0^T \int_Z \| r(t, e) \|_H^2 \nu(de) \, dt.
\]
Then we aim to show that for any $\rho$ sufficiently close to $\rho_0$, BSEE \eqref{eq:1.7} also admits a unique solution in $\mathcal{H}$.

To see this, rewriting \eqref{eq:1.7} in terms of $\rho_0$ yields
\begin{equation*}\label{eq:1.8}
\begin{aligned}
u(t)= &\varphi + \int_t^T \Big[A(s)u(s) + \rho_0 f(s, u(s), q(s), r(s,\cdot)) + \rho_0 g(s, u(s), q(s), r(s,\cdot)) \\
&+ (\rho - \rho_0)f(s, u(s), q(s), r(s,\cdot)) + (\rho - \rho_0)g(s, u(s), q(s), r(s,\cdot)) + f_0(s)\Big]ds \\
&- \int_t^T q(s)dW_s - \int_t^T \int_Z r(s,e)\,\tilde{\mu}(de,ds).
\end{aligned}
\end{equation*}
For any given triplet $(\tilde u, \tilde q, \tilde r) \in \mathcal{H}$, define a mapping $\Phi:(\tilde u, \tilde q, \tilde r)\mapsto(u, q, r)$ by BSEE
\begin{equation*}\label{eq:1.9}
\begin{aligned}
u(t) &= \varphi + \int_t^T \Big[A(s)u(s) + \rho_0 f(s, u(s), q(s), r(s,\cdot)) + \rho_0 g(s, u(s), q(s), r(s,\cdot)) \\
&\qquad + (\rho - \rho_0)f(s, \tilde u(s), \tilde q(s), \tilde r(s,\cdot)) + (\rho - \rho_0)g(s, \tilde u(s), \tilde q(s), \tilde r(s,\cdot)) + f_0(s)\Big]ds \\
&\quad - \int_t^T q(s)dW_s - \int_t^T \int_Z r(s,e)\,\tilde{\mu}(de,ds).
\end{aligned}
\end{equation*}
Bearing in mind that for $\rho = \rho_0$ and any $(\tilde u, \tilde q, \tilde r) \in \mathcal{H}$, BSEE admits a unique solution $(u,q,r) \in \mathcal{H}$, we know that $\Phi$ is a well-defined mapping from $\mathcal{H}$ to itself.

In fact, for $\rho$ close to $\rho_0$, $\Phi$ is a contraction mapping. We take $(\tilde u_i, \tilde q_i, \tilde r_i) \in \mathcal{H}$, $i=1,2$, and denote $(u_i, q_i, r_i) = \Phi(\tilde u_i, \tilde q_i, \tilde r_i)$.
Using a priori estimate \eqref{eq:130}, together with the Lipschitz continuity of $f,g$, we have
\begin{align*}
\|(u_1-u_2,q_1-q_2,r_1-r_2)\|_{\mathcal{H}}^2 \leq C |\rho - \rho_0|^2 \left( \|(\tilde u_1, \tilde q_1, \tilde r_1) - (\tilde u_2, \tilde q_2, \tilde r_2)\|_{\mathcal{H}}^2 \right),
\end{align*}
where $C$ is a constant depending only on $(l, \lambda, \alpha, \kappa, M, T)$ and independent of $\rho$ and $\rho_0$.
Choose $\delta := \frac{1}{2\sqrt{C}}$. Then, for any $\rho$ satisfying $|\rho - \rho_0| < \delta$, the mapping $\Phi$ is a contraction in the complete metric space $\mathcal{H}$. % with contraction constant $\frac{1}{2}$.
By Banach's fixed-point theorem, BSEE \eqref{eq:1.7} admits a unique solution in this case.
Note that since the constant $C$ %depends only on the fixed parameters $(C, \lambda, \alpha, \kappa, M, T)$ through the Lipschitz and coercivity conditions, and
is independent of $\rho_0$, the step size $\delta$ stays unchanged throughout the continuation procedure.

Finally, since BSEE \eqref{eq:1.7} is solvable at $\rho = 0$ due to Lemma \ref{lem:1.3}, we construct a finite sequence $0 = \rho_0 < \rho_1 < \cdots < \rho_n = 1$ such that $|\rho_{i+1} - \rho_i| < \delta$ for all $i=0,1,\cdots,n-1$, and successively solve \eqref{eq:1.7} for each $\rho_i$ based on the contraction principle. This allows us to reach $\rho = 1$ in finite steps, and in this way the existence of solution to BSEE \eqref{eq:a2} follows.

The uniqueness of solution to BSEE \eqref{eq:a2} is an immediate result from a priori estimate \eqref{eq:130}. Indeed, if $(u_1, q_1, r_1)$ and $(u_2, q_2, r_2)$ are two solutions of \eqref{eq:a2}, then applying \eqref{eq:130} with $\bar{f} = f$, $\bar{g} = g$, $\bar{\varphi} = \varphi$ yields
\[
\|(u_1-u_2,q_1-q_2,r_1-r_2)\|_{\mathcal{H}}^2 \leq 0,
\]
which implies that the two solutions coincide with each other.
\end{proof}

\section{Backward Stochastic Partial Differential Integral Equation}

In this section, we study a type of BSPDIE which can be regarded as a concrete realization of BSEE \eqref{eq:a2} by specifying an appropriate form of the operator \( A \).
For notational simplicity, define
$$D_i=\partial_{x^i}\ \ \ \text{and}\ \ \ D_{ij}=\partial^2_{x^ix^j},\ \ \ i,j=1,2,\cdots,d,$$
and for
the differential operator associated with a multi-index \( \alpha = (\alpha_1, \dots, \alpha_d) \in \mathbb{N}^d \) with \( |\alpha| = \alpha_1 + \cdots + \alpha_d \), define
\[
D^\alpha = \left( \partial_{x^1} \right)^{\alpha_1} \left( \partial_{x^2} \right)^{\alpha_2} \cdots \left( \partial_{x^d} \right)^{\alpha_d}.
\]
We also denote by \( Du \) and \( D^2u \) the gradient vector and the Hessian matrix of a function \( u \) defined on \( \mathbb{R}^d \), respectively.

Throughout this paper, when a vector-valued or matrix-valued function is said to belong to a given function space (e.g., \( Du \in L^2(\mathbb{R}^d) \)), it is understood that all of its components lie in that space. We also adopt the Einstein summation convention to simplify notation in expressions involving indexed components.

%We study Sobolev solutions to BSPDIEs.
Denote by \( W^{m,p}(\mathbb{R}^d) \) the standard Sobolev space consisting of functions defined on $\mathbb{R}^d$ whose weak derivatives up to order \( m \in \mathbb{Z}^+ \) belong to \( L^p(\mathbb{R}^d) \), for \( 1 \leq p < \infty \). In particular, we write \( H^m := W^{m,2}(\mathbb{R}^d) \) for \( m \in \mathbb{N} \), and identify \( H^0 \) with \( L^2(\mathbb{R}^d) \). Each \( H^m \) space is a Hilbert space with its dual denoted by \( H^{-m} \).

We adopt the following simple notations for spaces:
\[
\begin{aligned}
& H^0 = L^2 = H^0(\mathbb{R}^d) = L^2(\mathbb{R}^d),\ \ \ \ \ \mathbb{H}^n = \mathbb{H}^n(\mathbb{R}^d) := M_{\mathbb{F}}^2(0,T; H^n), \\
& \mathbb{H}^{\nu, n} = \mathbb{H}^{\nu, n}(\mathbb{R}^d) := M_\mathscr{P}^{\nu,2}([0,T] \times Z; H^n),\ \ \ \ \ \mathbb{S}^2({H}^n) := S_{\mathbb{F}}^2(0,T;{H}^n),
\end{aligned}
\]
and the following simple notations for inner products and norms:
\[
\begin{aligned}
( \cdot,\cdot )_n := &( \cdot,\cdot)_{H^n},\ \ \ \ \ \| \cdot \|_n := \| \cdot \|_{H^n}, \ \ \ \ \ \| \cdot \|_{\nu,n} := \| \cdot \|_{M^{\nu,2}(Z;H^n)},\\
&|||\cdot |||_n:=\|\cdot\|_{\mathbb{H}^n},\ \ \ \ \ |||\cdot|||_{\nu,n} :=\|\cdot\|_{\mathbb{H}^{\nu, n}}.
\end{aligned}
\]
For $(t,x)\in[0,T]\times \mathbb R^d$, the BSPDIE we are concerned with has a divergence form
\begin{eqnarray}\label{eq:2.1}
u(t,x)&=&\varphi(x)+\int_t^T\bigg(
      D_{i}(a^{ij}(s,x)D_{j}u(s,x)) +b^i(s,x)D_{i}u(s,x)+c(s,x)u(s,x)\nonumber\\
      &&\ \ \ \ \ \ \ \ \ \ \ \ \ \ \ \ +\int_{Z}\Big[u(s,x+k(s,e,x))-u(s,x)-k^i(s,e,x)D_{i}u(s,x)\Big]\nu (de)\nonumber\\
      &&\ \ \ \ \ \ \ \ \ \ \ \ \ \ \ \  +D_{i}(\sigma^i(s,x)q(s,x)) +v(s,x)q(s,x)+f_0(s,x)\nonumber\\
      &&\ \ \ \ \ \ \ \ \ \ \ \ \ \ \ \  +\int_{Z}\Big[r(s,e,x+k(s,e,x))-r(s,e,x)\Big]\nu(de)\bigg)ds\nonumber\\
      &&-\int_t^Tq(s,x)dW_s-\int_t^T\int_{Z}r(s,e,x)\tilde \mu(de,ds),
\end{eqnarray}
or equivalently, a non-divergence form
\begin{equation} \label{eq:3.2}
\begin{aligned}
u(t,x) = &\varphi(x) + \int_t^T \bigg(
a^{ij}(s,x)D_{ij}u(s,x) + b^i(s,x)D_{i}u(s,x) + c(s,x)u(s,x) \\
&\ \ \ \ \ \ \ \ \ \ \ \ \ \ \ \ + \int_Z \left[ u(s,x+k(s,e,x)) - u(s,x) - k^i(s,e,x)D_{i}u(s,x) \right] \nu(de) \\
&\ \ \ \ \ \ \ \ \ \ \ \ \ \ \ \ + \sigma^i(s,x)D_{i}q(s,x) + v(s,x)q(s,x) + f_0(s,x) \\
&\ \ \ \ \ \ \ \ \ \ \ \ \ \ \ \ + \int_Z \left[ r(s,e,x+k(s,e,x)) - r(s,e,x) \right] \nu(de) \bigg) ds \\
& - \int_t^T q(s,x)dW_s - \int_t^T \int_Z r(s,e,x)\tilde{\mu}(de,ds).
\end{aligned}
\end{equation}
Here all coefficients $a=[a^{ij}]:\Omega\times[0,T]\times\mathbb{R}^d\to\mathbb{R}^d\times\mathbb{R}^d,\ b=(b^1,\cdots,b^d)^*:\Omega\times[0,T]\times\mathbb{R}^d\to\mathbb{R}^d,\ c:\Omega\times[0,T]\times\mathbb{R}^d\to\mathbb{R}^1,\ k=(k^1,\cdots,k^d)^*:\Omega\times[0,T]\times Z\times\mathbb{R}^d\to\mathbb{R}^d,\ \sigma=(\sigma^1,\cdots,\sigma^d)^*:\Omega\times[0,T]\times\mathbb{R}^d\to\mathbb{R}^d,\ v:\Omega\times[0,T]\times\mathbb{R}^d\to\mathbb{R}^1$, the non-homogeneous term $f_0:\Omega\times[0,T]\times\mathbb{R}^d\to\mathbb{R}^1$, and the terminal value $\varphi:\Omega\times\mathbb{R}^d\to\mathbb{R}^1$ could be random.

Then we give the definitions for weak solution and strong solution to the concerned BSPDIEs.
\begin{defn}\label{def:3.1}
A triple of processes \((u, q, r) \in \mathbb{S}^2(H^0) \cap \mathbb{H}^1 \times \mathbb{H}^0 \times \mathbb{H}^{\nu,0}\) is called a weak solution to BSPDIE \eqref{eq:2.1} if, for any \(t \in [0, T]\) and test function \(\eta \in C_0^\infty(\mathbb{R}^d)\),
\begin{eqnarray*}\label{eq:3.222}
&&\int_{\mathbb{R}^d} u(t, x) \eta(x) \, dx \nonumber\\
&=& \int_{\mathbb{R}^d} \varphi(x) \eta(x) \, dx + \int_t^T \int_{\mathbb{R}^d} \Bigg(
- a^{ij}(s, x) D_{j} u(s, x) D_{i} \eta(x) + b^i(s, x) D_{i} u(s, x) \eta(x)  \nonumber
\\&&+ c(s, x) u(s, x) \eta(x) \nonumber + \int_Z \Big[ u(s, x + k(s, e, x)) - u(s, x) - k^i(s, e, x) D_{i} u(s, x) \Big] \eta(x) \nu(de) \nonumber\\
&&- \sigma^i(s, x) q(s, x) D_{i} \eta(x) + v(s, x) q(s, x) \eta(x) + f_0(s, x) \eta(x) \nonumber\\
&&+ \int_Z \Big[ r(s, x + k(s, e, x)) - r(s, e, x) \Big] \eta(x) \nu(de)
\Bigg)dxds \nonumber\\
&&- \int_t^T \int_{\mathbb{R}^d} q(s, x) \eta(x)dxdW_s - \int_t^T \int_Z \int_{\mathbb{R}^d} r(s, e, x) \eta(x)dx\tilde{\mu}(de, ds).
\end{eqnarray*}
\end{defn}

\begin{defn}\label{def:3.2}
A triple of processes \((u, q, r) \in \mathbb{S}^2(H^1) \cap \mathbb{H}^2 \times \mathbb{H}^1 \times \mathbb{H}^{\nu,1}\) is called a strong solution to BSPDIE \eqref{eq:3.2} if, for any \(t \in [0,T]\) and a.e. \(x \in \mathbb{R}^d\),
\begin{eqnarray*}\label{eq:3.3}
u(t,x) &=& \varphi(x) + \int_t^T \Bigg(
a^{ij}(s,x) D_{ij} u(s,x) + b^i(s,x) D_{i} u(s,x) + c(s,x) u(s,x) \nonumber\\
&&+ \int_Z \Big[ u(s, x + k(s,e,x)) - u(s,x) - k^i(s,e,x) D_{i} u(s,x) \Big] \nu(de) \nonumber\\
&&+ \sigma^i(s,x) D_{i} q(s,x) + v(s,x) q(s,x) + f_0(s,x) \nonumber\\
&&+ \int_Z \Big[ r(s,e, x + k(s,e,x)) - r(s,e,x) \Big] \nu(de)
\Bigg) ds \nonumber\\
&&- \int_t^T q(s,x)dW_s - \int_t^T \int_Z r(s,e,x)\tilde{\mu}(de,ds).
\end{eqnarray*}
\end{defn}

\begin{ass}\label{ass:b1}
The functions \( a, b, c, \sigma, v \), and \( f_0 \) are measurable with respect to \( \mathscr{P} \times \mathscr{B}(\mathbb{R}^d) \), and take values in the spaces of real symmetric \( d \times d \) matrices, \( \mathbb{R}^d \), \( \mathbb{R} \), \( \mathbb{R}^{d} \), \( \mathbb{R} \), and \( \mathbb{R} \), respectively.
The function \( k \) is measurable with respect to \( \mathscr{P} \times \mathscr{B}(Z) \times \mathscr{B}(\mathbb{R}^d) \), and takes values in \( \mathbb{R}^d \).
The terminal condition \( \varphi \) is measurable with respect to \( \mathscr{F}_T \times \mathscr{B}(\mathbb{R}^d) \), and takes values in \( \mathbb{R} \).
\end{ass}

\begin{ass}\label{ass:b2}
\textbf{(Uniform Parabolicity)}
There exist constants \( \alpha > 0 \), \( \delta > 0 \), and \( 0 < \kappa < \sqrt{\alpha} \) such that for any \( (\omega, t, e, x) \in \Omega \times [0,T] \times Z \times \mathbb{R}^d \), the following (i)--(iii) hold:
\begin{itemize}
  \item[(i)] uniform ellipticity: for all \( \xi \in \mathbb{R}^d \),
        \[
        a^{ij}(t,x) \xi_i \xi_j \geq \alpha |\xi|^2;
        \]
  \item[(ii)] diffusion coefficient bound:
        \[
        |\sigma(t,x)|^2 = \sum_{i=1}^d |\sigma^i(t,x)|^2 \leq \kappa^2;
        \]
  \item[(iii)] non-degeneracy of jump transformation:
        \[
        \left|\det\left(I + \partial_x k(t,e,x)\right)\right| \geq \delta.
        \]
\end{itemize}
\end{ass}
\begin{ass}\label{mz4}
The coefficients \( {{a}}, b, c,{{\sigma}}, v, k \) are uniformly bounded by a constant \( L > 0 \).
\end{ass}

With Assumptions \ref{ass:b1}--\ref{mz4}, together with the integrable conditions on the generator $(\varphi,f_0)$, we first establish the existence and uniqueness of weak solution to the concerned BSPDIE.
\begin{thm}\label{thm:3.1}
Assume that Assumptions \ref{ass:b1}--\ref{mz4} hold. If
$f_0 \in \mathbb{H}^{-1}$ and $\varphi \in L^2_{\mathscr{F}_T}(\Omega; L^2)$,
then BSPDIE \eqref{eq:2.1} admits a unique weak solution $(u,q,r) \in \mathbb{S}^2(H^0) \cap \mathbb{H}^1 \times \mathbb{H}^0 \times \mathbb{H}^{\nu,0}$ satisfying
\[
\mathbb{E} \left[\sup_{0 \leq t \leq T} \|u(t)\|_0^2 \right] + |||u|||_1^2 + |||q|||_0^2 + |||r|||_{\nu,0}^2 \leq C \left( |||f_0|||_{-1}^2 + \mathbb{E}[\|\varphi\|_0^2] \right),
\]
where $C = C(L, \kappa, \alpha, \delta, T)$ is a positive constant.
\end{thm}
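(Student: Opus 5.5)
The plan is to realize BSPDIE \eqref{eq:2.1} as a special case of BSEE \eqref{eq:a2} in the Gelfand triple $(V,H,V^*)=(H^1,H^0,H^{-1})$, and then apply Theorem \ref{thm:1.2}.

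\textbf{Identification of the operators.} Take the operator part
\[
\langle A(t)u,\eta\rangle := -\int a^{ij}D_ju D_i\eta\,dx+\int \big(b^iD_iu+cu\big)\eta\,dx+\int\!\!\int_Z\big[u(x+k)-u(x)-k^iD_iu(x)\big]\eta(x)\,\nu(de)\,dx .
\]
The terms carrying $q$ and $r$ go into the generator:
\[
\langle f(t,u,q,r),\eta\rangle:=-\int \sigma^iqD_i\eta\,dx+\langle f_0,\eta\rangle,\qquad g(t,u,q,r):=vq+\int_Z\big[r(e,x+k)-r(e,x)\big]\nu(de).
\]

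\textbf{Checking Assumption \ref{ass:1.1}.} Measurability and integrability follow from Assumption \ref{ass:b1} and the hypotheses on $f_0,\varphi$. Boundedness of $A$ in $\mathscr L(H^1,H^{-1})$ comes from Assumption \ref{mz4}. For the jump term, $\|u(\cdot+k(\cdot))\|_0\le \delta^{-1/2}\|u\|_0$ by the change of variables $y=x+k(t,e,x)$ together with Assumption \ref{ass:b2}(iii). We also need that this map is injective, or more precisely that its multiplicity is bounded; I would assume this is implicit, e.g.\ because $\partial_x k$ is bounded so that $x\mapsto x+k$ is a local diffeomorphism. Since $\nu(Z)<\infty$ and $|k|\le L$, the whole jump term is bounded by $C\|u\|_1\|\eta\|_0$.

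The same estimate, combined with $|v|\le L$, gives the Lipschitz property of $g$ in $r$ and $q$ with constant $l$. Note that $g$ does not depend on $u$, so no $V$-norm dependence is needed. For $f$, $|\langle f(q_1)-f(q_2),\eta\rangle|\le \kappa\|q_1-q_2\|_0\|\eta\|_1$ by Assumption \ref{ass:b2}(ii), which is exactly the structure in (A.5) with the same $\kappa<\sqrt\alpha$.

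\textbf{Main obstacle: coercivity (A.4).} The principal part gives $-\alpha\|Du\|_0^2$. The drift and zeroth-order terms are bounded by $\varepsilon\|Du\|^2+C_\varepsilon\|u\|^2$. The nonlocal term is dangerous because it contains $-\int k^iD_iu\,u$, but this equals $\frac12\int (\mathrm{div}\,k)u^2$ only if $k$ is differentiable. Without that, we instead bound it by $L\,\nu(Z)\|Du\|_0\|u\|_0\le\varepsilon\|Du\|_0^2+C_\varepsilon\|u\|_0^2$, and bound $\int u(x+k)u(x)$ by $\delta^{-1/2}\|u\|_0^2$. Taking $\varepsilon$ small yields
\[
\langle Au,u\rangle\le -\tfrac{\alpha}{2}\|u\|_1^2+\lambda\|u\|_0^2 .
\]
Here a subtlety appears: the coercivity constant has become $\alpha/2$, and (A.5) requires $\kappa<\sqrt{\alpha/2}$. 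To keep the full margin I would absorb more carefully, taking $\varepsilon<\alpha-\kappa^2$ so that the coercivity constant $\alpha'=\alpha-\varepsilon$ still satisfies $\kappa<\sqrt{\alpha'}$. This is the delicate step.

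\textbf{Conclusion.} Theorem \ref{thm:1.2} then gives existence, uniqueness, and estimate \eqref{eq:131}, in which $g(\cdot,0,0,0)=0$ and $f(\cdot,0,0,0)=f_0$. This is the claimed bound with $C=C(L,\kappa,\alpha,\delta,T)$; the dependence on $\nu(Z)$ is absorbed as a fixed parameter. Finally, the weak-solution identity with $\eta\in C_0^\infty$ follows from Definition \ref{def:1.1}, by density of $C_0^\infty$ in $H^1$.
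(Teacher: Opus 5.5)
Your proposal is correct and is essentially the paper's proof. It uses the same Gelfand triple $(H^1,H^0,H^{-1})$, the same splitting into $A$, $f$, $g$ (putting $vq$ in $g$ rather than in $f$ makes no difference), the same three-part coercivity estimate, and then applies Theorem \ref{thm:1.2} directly. Two of your refinements go beyond the paper. Your choice of $\varepsilon$ with $\alpha-\varepsilon>\kappa^2$ is more careful than the paper's $\varepsilon=\alpha/4$, which only gives coercivity constant $\alpha/2$ and so leaves unchecked the requirement $\kappa<\sqrt{\alpha/2}$ in (A.5). The injectivity you worry about does follow from the hypotheses when $k$ is $C^1$ in $x$, by Hadamard's global inverse function theorem: $|\det(I+\partial_xk)|\ge\delta$ makes $x\mapsto x+k(t,e,x)$ a local diffeomorphism, and $|k|\le L$ makes it proper. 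Your stated reason, boundedness of $\partial_xk$, is not the right justification, and it is not assumed in Section 3 anyway.
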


\begin{proof} We utilize the Gelfand triple framework and take \((V, H, V^*)
= \big(H^1, H^0, H^{-1})\). For any $t\in[0,T]$ and \((u, q, r) \in H^1 \times H^0 \times M^{\nu,2}(Z; H^0)\), we define some operators:
\[\begin{aligned} &A(t)u:= D_{i}\big(a^{ij}(t)D_{j}u\big) + b^i(t)D_{i}u + c(t)u  + \int_Z \big[u(\cdot + k(t,e,\cdot)) - u - k^i(t,e,\cdot)D_{i}u\big] \nu(de), \\ &f(t,u,q,r):= D_{i}\big(\sigma^i(t)q\big) + v(t)q + f_0(t), \\ &g(t,u,q,r):= \int_Z \big[r(e,\cdot + k(t,e,\cdot)) - r(e,\cdot)\big] \nu(de). \end{aligned}\]
Actually, BSPDIE \eqref{eq:2.1} is a specific type of BSEE \eqref{eq:a2} with above \((A, f, g)\). Next we verify that all conditions in Assumption \ref{ass:1.1} for \((A, f, g)\) in BSPDIE \eqref{eq:2.1} are satisfied.

   \textbf{Step 1: Verification of Conditions (A.1)--(A.3)}

    The measurable condition (A.1), integrable condition (A.2), and uniform bounded condition (A.3) for the random mapping $A$ can be directly verified by
    the conditions in  Theorem \ref{thm:3.1}.

\textbf{Step 2: Coercivity of \(A\) (Condition A.4)} For any \(\eta \in C_0^\infty(\mathbb{R}^d)\), we divide the estimate of the bilinear form \(\langle A(t)\eta, \eta \rangle\) for $t \in [0,T]$ into three parts.

 \textbf{a. Principal term:} by the uniform ellipticity of \(a^{ij}\) and integration by parts, we have \[ \int_{\mathbb{R}^d} -a^{ij}(t,x) D_{i}\eta D_{j}\eta \, dx \leq -\alpha \|\nabla \eta\|_{L^2}^2 = -\alpha \|\eta\|_{H^1}^2 + \alpha \|\eta\|_{L^2}^2. \]

 \textbf{b. Lower-order terms:} by the boundedness of \(b^i, c\) and Young's inequality, for arbitrary $\varepsilon>0$, we have \[ \int_{\mathbb{R}^d} \big(b^i D_{i}\eta \cdot \eta + c \eta^2\big) dx \leq \varepsilon \|\eta\|_{H^1}^2 + C(\varepsilon, L) \|\eta\|_{L^2}^2. \]

 \textbf{c. Jump term:}
 by the absolute value inequality, %\(\vert a - b - c\vert\leq\vert a\vert+\vert b\vert+\vert c\vert\),
we have
\begin{align*}
&\int_{\mathbb{R}^{d}}\int_{Z}[ \eta(x + k(t,e,x)) - \eta(x) - k^{i}(t,e,x) D_{i}\eta(x) ] \eta(x) \nu (de) dx\\
\leq& \int_{\mathbb{R}^{d}}\int_{Z}\eta(x + k(t,e,x))\nu(de)\eta(x) dx + \int_{Z}\int_{\mathbb{R}^{d}}\vert\eta(x)\vert ^{2}dx\nu (de) \\
&+\int_{Z}\int_{\mathbb{R}^{d}}\vert k^{i}(t,e,x) D_{i}\eta(x) \eta(x)\vert dx \nu (de).
\end{align*}
For arbitrary $\varepsilon>0$, it follows from H\"{o}lder's inequality and Young's inequality that
\begin{align*}
&\int_{\mathbb{R}^{d}}\int_{Z}\eta(x + k(t,e,x))\nu(de)\eta(x) dx
+ \int_{Z}\int_{\mathbb{R}^{d}}|\eta(x)|^{2}dx\nu (de) \\
&+\int_{Z}\int_{\mathbb{R}^{d}}|k^{i}(t,e,x) D_{i}\eta(x) \eta(x)| dx \nu (de) \\
\leq &\frac{1}{2}\int_{\mathbb{R}^{d}}\left|\int_{Z}\eta(x + k(t,e,x)) \nu(de)\right|^{2}dx
+ \varepsilon\|\eta\|_1^2 + C(L,\varepsilon)\|\eta\|_0^2.
\end{align*}
Based on the property of the measure \(\nu\) on \(Z\), we have
\begin{align*}
&\frac{1}{2}\int_{\mathbb{R}^{d}}\vert \int_{Z}\eta(x + k(t,e,x)) \nu(de)\vert ^{2}dx+\varepsilon\|\eta\|_1^2+C(L,\varepsilon)\|\eta\|_0^2\\
\leq& \frac{1}{2}\nu(Z)\int_{Z}\int_{\mathbb{R}^{d}}\vert\eta(x + k(t,e,x))\vert ^{2}dx\nu(de)+\varepsilon\|\eta\|_1^2+C(L,\varepsilon)\|\eta\|_0^2.
\end{align*}
Set \(y = x + k(t,e,x)\), and we do the change of variables. Noticing the Jacobian determinant \(\left|\det\left(I + \partial_x k(t,e,x)\right)\right| \geq \delta>0\), we have
\begin{align*}
&\frac{1}{2}\nu(Z)\int_{Z}\int_{\mathbb{R}^{d}}\vert\eta(x + k(t,e,x))\vert ^{2}dx\nu(de)+\varepsilon\|\eta\|_1^2+C(L,\varepsilon)\|\eta\|_0^2\\
=&\frac{1}{2}\nu(Z)\int_{Z}\int_{\mathbb{R}^{d}}\vert\eta(y)\vert ^{2}\vert\det (I + \partial_{x}k(t,e,x))\vert^{-1}dy\nu(de)+\varepsilon\|\eta\|_1^2+C(L,\delta,\varepsilon)\|\eta\|_0^2\\
\leq&\frac{1}{2}\frac{\nu(Z) ^{2}}{\delta }\|\eta\|_0^2+\varepsilon\|\eta\|_1^2+C(L,\delta,\varepsilon)\|\eta\|_0^2,
\end{align*}
which implies
\begin{align*}
&\int_{\mathbb{R}^{d}}\int_{Z}\big[ \eta(x + k(t,e,x)) - \eta(x) - k^{i}(t,e,x) D_{i}\eta(x) \big] \eta(x) \nu(de) \, dx \notag \\
\leq\
&\frac{1}{2}\frac{\nu(Z)^2}{\delta} \|\eta\|_0^2 + \varepsilon \|\eta\|_1^2 + C(L,\delta,\varepsilon)\|\eta\|_0^2.
\label{eq:3.131}
\end{align*}

In conclusion, choosing $\varepsilon = {\alpha\over4}$ and combining estimates in above $\textbf{a,b,c}$, we have the desired coercivity
$$
\langle A(t)\eta, \eta \rangle \leq -\frac{\alpha}{2} \|\eta\|_1^2 + C \|\eta\|_0^2.
$$

 \textbf{Step 3: Lipschitz Continuity of \(f\) and \(g\) (Condition A.5)}

Take any \((u_1, q_1, r_1), (u_2, q_2, r_2) \in H^1 \times H^0 \times M^{\nu,2}(Z; H^0)\), and we verify the Lipschitz continuity of $f$ and $g$ in turn. For $f$, due to the bound $(\sigma^{i}(t,x))(\sigma^{i}(t,x))^{*} \leq \kappa^2 I$,
    for any \(u \in H^1\), it turns out that
\[
\begin{aligned}
&|\langle f(t,u_1,q_1,r_1) - f(t,u_2,q_2,r_2), u \rangle| \\
=& |\langle D_{i}(\sigma^i(t)(q_1 - q_2)) + v(t)(q_1 - q_2), u \rangle| \\
=& \left| -\int_{\mathbb{R}^d} \sigma^i(t)(q_1 - q_2)(x) \cdot D_{i} u(x) \, dx + \int_{\mathbb{R}^d} v(t)(q_1 - q_2)(x) \cdot u(x) \, dx \right| \\
\leq& \kappa \|q_1 - q_2\|_H \|u\|_V + C \|q_1 - q_2\|_H \|u\|_H.
\end{aligned}
\]
For \(g\), the Lipschitz continuity follows immediately from
   \begin{eqnarray*}\label{eq:3.15}
&&\Vert g( t,u_1,q_1,r_1) -g(t,u_2,q_2,r_2)
\Vert _0\nonumber\\
&=&\|\int_{Z}\bigg(\Big(r_1\big(t, e,\cdot+k(t,e,\cdot)
\big)-r_2\big(t, e,\cdot+k(t,e,\cdot)
\big)\Big) -\big(r_1(t, e,\cdot)-r_2(t, e,\cdot)\big) \bigg)\nu( de)\|_0\nonumber\\
&\leq&\|\int_{Z}\big(r_1(t, e,\cdot)-r_2(t, e,\cdot\big)|\det (I+\partial_{x}k(t,e,x)|^{-1}\nu( de)\|_0\nonumber\\
&&+\|\int_{Z}\big(r_1(t, e,\cdot)-r_2(t, e,\cdot)\big)\nu( de)\|_0\nonumber\\
&=&C(\delta,\nu(Z) ) \Vert r_1(t,\cdot,\cdot)-r_2(t,\cdot,\cdot)%
\Vert_ {\nu,0}.
\end{eqnarray*}

Now we have verified all conditions in Assumption \ref {ass:1.1}
for the operators $(A,f,g)$. By Theorem \ref{thm:1.2}, there exists a unique (weak) solution $(u, q, r) \in \mathbb{S}^2(L^2) \cap \mathbb{H}^1 \times \mathbb{H}^0 \times \mathbb{H}^{\nu,0}$ to BSPDIE \eqref{eq:2.1} satisfying the energy estimate
$$
\mathbb{E}\left[\sup_{0 \leq t \leq T} \|u(t)\|_0^2\right] + |||u|||_1^2 + |||q|||_0^2 + |||r|||_{\nu,0}^2 \leq C \left(|||f_0|||_{-1}^2 + \mathbb{E}[\|\varphi\|_0^2]\right),
$$
where $C = C(L, \kappa, \alpha, \delta, T)$. This completes the proof of Theorem \ref{thm:3.1}.
\end{proof}

To investigate the strong solution to BSPDIE \eqref{eq:3.2}, we need an additional condition.
\begin{ass}\label{ass:b3}
There exists a continuous and increasing function \( \gamma: \mathbb{R}^+ \to \mathbb{R}^+ \) with \( \gamma(0) = 0 \) such that for any \( \omega \in \Omega \), \( t \in [0,T] \) and \( x,y \in \mathbb{R}^d \),
\[
|a(t,x) - a(t,y)| + |\sigma(t,x) - \sigma(t,y)| \leq \gamma(|x - y|).
\]
\end{ass}
In the rest of this section, we prove the main theorem related to strong solution.
\begin{thm}\label{thm:b2}
Assume that Assumptions \ref{ass:b1}--\ref{ass:b3} hold. If $f_0 \in \mathbb{H}^0$ and $\varphi \in L^2_{\mathscr{F}_T}(\Omega; H^1)$, then
BSPDIE \eqref{eq:3.2} admits a unique strong solution $(u,q,r)\in \mathbb{S}^2(H^1) \cap \mathbb{H}^2, \times \mathbb{H}^1\times \mathbb{H}^{\nu,1}$ satisfying
\begin{equation}\label{eq:3.18}
E\left[ \sup_{0 \leq t \leq T} \| u(t) \|_1^2 \right] + ||| u |||_2^2 + ||| q |||_1^2 + ||| r |||_{\nu,1}^2 \leq C \left( ||| f_0 |||_0^2 + E\| \varphi \|_1^2 \right),
\end{equation}
where \( {{C = C(L,\kappa,\alpha,\delta,T)}} \) is a positive constant.
\end{thm}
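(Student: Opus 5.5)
The plan is to establish the a priori estimate \eqref{eq:3.18} first and then obtain existence by the method of continuity (parameter continuation), the same way Theorem \ref{thm:1.2} was derived from Lemma \ref{lem:1.3}. Uniqueness of strong solutions is immediate: a strong solution is in particular a weak solution, since integrating by parts in $a^{ij}D_{ij}u$ and $\sigma^iD_iq$ only shifts first-order terms into the coefficients. (If $a,\sigma$ are merely continuous this needs a mollification of $a,\sigma$ in the identity.) Uniqueness then follows from Theorem \ref{thm:3.1}.

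\textbf{Step 1: the estimate for constant $a,\sigma$.} I start with $a,\sigma$ independent of $x$, i.e.\ depending only on $(\omega,t)$. Take the Gelfand triple $(V,H,V^*)=(H^2,H^1,H^0)$, with the $H^1$-pairing used as the pivot. Apply the It\^o formula of Lemma \ref{lem:c1} to $\|u(t)\|_1^2$, or equivalently apply the $H^0$ energy argument to $D_ku$, which satisfies an equation of the same type. The principal terms give $-\alpha\|D^2u\|_0^2$ from $a^{ij}D_{ij}u$. Since $\sigma^iD_iq$ is paired against $-\Delta u$, the contribution $2(\sigma^iD_iq,-\Delta u)$ is absorbed as follows: by Young,
\[
2\kappa\|Dq\|_0\|D^2u\|_0\le \frac{\kappa^2}{\alpha}\|Dq\|_0^2\cdot\alpha+\dots ,
\]
and this is balanced against $-\|Dq\|_0^2$ coming from the quadratic variation. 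The condition $\kappa<\sqrt\alpha$ is exactly what makes this absorption work, as in (A.5). The lower-order terms $b,c,v$ and $f_0\in\mathbb H^0$ are handled by Young's inequality.

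The jump terms need more care, since $k$ is only bounded and measurable in $e$. For $\int_Z[u(x+k)-u-k^iD_iu]\nu(de)$, pair it with $-\Delta u$ in $L^2$. Using $\nu(Z)<\infty$ and the change of variables with Jacobian bounded below by $\delta$ (as in Step 2c of Theorem \ref{thm:3.1}), this is bounded by $C\|u\|_1\|u\|_2\le\varepsilon\|u\|_2^2+C\|u\|_1^2$. I am implicitly assuming $k$ is $C^1$ in $x$ with bounded derivative so that the change of variables is legitimate, and I will read Assumption \ref{mz4} this way. Similarly, the term $\int_Z[r(x+k)-r]\nu(de)$ is bounded by $C\|r\|_{\nu,0}\|u\|_2$. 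Since the weak estimate of Theorem \ref{thm:3.1} already controls $|||r|||_{\nu,0}$, this term is harmless, and the $\nu$-quadratic variation supplies $-\|Dr\|_{\nu,0}^2$. Gronwall's inequality followed by BDG then yields \eqref{eq:3.18} in this case. For existence with constant coefficients, I verify Assumption \ref{ass:1.1} on the triple $(H^2,H^1,H^0)$ and invoke Theorem \ref{thm:1.2}.

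\textbf{Step 2: localization for general $a,\sigma$.} This is the main obstacle. Use the modulus $\gamma$ from Assumption \ref{ass:b3} to choose $\epsilon$ so that $\gamma(2\epsilon)$ is small. Take a partition of unity $\{\zeta_m\}$ subordinate to balls $B_\epsilon(x_m)$ with bounded overlap and $\sum_m|D\zeta_m|^2+|D^2\zeta_m|^2\le C(\epsilon)$. Then $u\zeta_m$ solves an equation with frozen coefficients $a(t,x_m),\sigma(t,x_m)$, plus perturbation terms $(a-a(x_m))\zeta_mD^2u$ and $(\sigma-\sigma(x_m))\zeta_m Dq$, which are small in $L^2$ by $\gamma(\epsilon)$, and commutator terms involving only $Du$, $u$, $q$. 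A nonlocal commutator also appears, namely $\zeta_m(x)r(x+k)-(\zeta_m r)(x+k)$, which is bounded but not localized. Summing over $m$ with bounded overlap, and using the crude bound $C\|r\|_{\nu,0}$ for that nonlocal term, the Step 1 estimate gives
\[
\|u\|_{\mathcal H^2\text{-type}}^2\le C\gamma(2\epsilon)\big(|||u|||_2^2+|||q|||_1^2\big)+C(\epsilon)\big(\text{lower-order norms}\big).
\]
The lower-order norms are controlled by Theorem \ref{thm:3.1}, which closes \eqref{eq:3.18}. The delicate point is that $\kappa<\sqrt\alpha$ must survive the freezing. It does, because $\sigma(x_m)$ still has $|\sigma(x_m)|\le\kappa$.

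\textbf{Step 3: continuity.} Set $a_\theta=\theta a+(1-\theta)\alpha I$ and $\sigma_\theta=\theta\sigma$ for $\theta\in[0,1]$. These preserve the ellipticity constant, the bound $\kappa$, and the modulus $\gamma$, so \eqref{eq:3.18} holds uniformly in $\theta$. The case $\theta=0$ has constant coefficients and is solved by Step 1. Moving from $\theta_0$ to $\theta$ is a contraction on $\mathbb S^2(H^1)\cap\mathbb H^2\times\mathbb H^1\times\mathbb H^{\nu,1}$ with constant $C|\theta-\theta_0|$, exactly as in the proof of Theorem \ref{thm:1.2}. Finitely many steps reach $\theta=1$.
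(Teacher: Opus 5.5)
Your overall architecture (frozen-coefficient theory in $(H^2,H^1,H^0)$, localization via $\gamma$, then continuation) is the paper's. The base of your continuation, however, is not justified.

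\textbf{Main gap.} You claim that for $x$-independent $a,\sigma$ you can verify Assumption~\ref{ass:1.1} on $(H^2,H^1,H^0)$ with all lower-order terms present and invoke Theorem~\ref{thm:1.2}. This fails in general. Since $v$ is only bounded, $vq$ cannot go into the $H^1$-valued $g$. In the $V^*$-valued $f$ it gives
\[
|\langle v(q_1-q_2),\eta\rangle|=|(v(q_1-q_2),(I-\Delta)\eta)_0|\le L\|q_1-q_2\|_0\|\eta\|_2 .
\]
Without derivatives of $v$ you cannot move $\Delta$ off $\eta$, and (A.5) has no slot for $\|q_1-q_2\|_{H^0}\|\eta\|_{H^2}$ with a large constant. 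You would therefore need $\kappa+L<\sqrt\alpha$, which is not assumed. This is exactly the situation at $\theta=0$ in your Step~3, where $\sigma=0$ but $b,c,v,k$ are untouched. The $r$-jump term has the same defect unless you add boundedness of $\partial_xk$, which the paper's route does not need. Your a priori estimate closes only because you import $|||q|||_0$ and $|||r|||_{\nu,0}$ from the $L^2$ level, but Theorem~\ref{thm:1.2} works in a single triple and cannot use that information.

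The missing idea is Step~2 of Proposition~\ref{prop:c1}. For $x$-independent $a,\sigma$, \eqref{eq:3.2} is also of divergence form, so Theorem~\ref{thm:3.1} gives a weak solution. Freeze all its lower-order and jump terms into $\tilde F\in\mathbb H^0$. Solve $du=-(a^{ij}(t)D_{ij}u+\sigma^i(t)D_iq+\tilde F)\,dt+\cdots$ by Theorem~\ref{thm:1.2}, where (A.5) holds with constant $\kappa$. Then identify the two solutions by uniqueness of weak solutions.

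Alternatively, put $\theta$ in front of the lower-order and jump operators too, so that $\theta=0$ is the pure principal equation. Scale the operators (for the jumps, replace $\nu$ by $\theta\nu$ in the drift), not $k$ itself, since $\det(I+\theta\partial_xk)$ can vanish for intermediate $\theta$.

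\textbf{Second problem.} Once $a,\sigma$ depend on $x$ and are only uniformly continuous, a strong solution of \eqref{eq:3.2} is not a weak solution of \eqref{eq:2.1}; that would require $D_ia^{ij}$ and $D_i\sigma^i$. Mollifying does not help. It creates a drift of size $\gamma(\epsilon)/\epsilon$, with which the constant of Theorem~\ref{thm:3.1} blows up, and a remainder $(a-a_\epsilon)D^2u$ controlled only by $\|u\|_2$. So Theorem~\ref{thm:3.1} gives you neither the uniqueness you claim nor the lower-order norms $|||u|||_1,|||q|||_0,|||r|||_{\nu,0}$ you use to close Step~2, and hence not the $\theta$-uniform estimate of Step~3.

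Uniqueness follows anyway from the a priori estimate by linearity, as in the paper. The lower-order norms should come, as in Lemma~\ref{lem:d4}, from the $L^2$ It\^o identity for the strong solution itself, using
\[
2(u,a^{ij}D_{ij}u+\sigma^iD_iq)_0\le\varepsilon(\|u\|_2^2+\|q\|_1^2)+C_\varepsilon\|u\|_0^2 .
\]
Insert this into the localized $H^1$ estimate on $[t,T]$, choose $\varepsilon$ against that estimate's constant, and close by Gronwall in $E\|u(t)\|_1^2$.

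\textbf{What works.} With these repairs, your localization is a cleaner substitute for the paper's detour through Lemma~\ref{lem:d3}. You apply the frozen-coefficient estimate directly to $u\zeta_m$, treat the $\gamma(\epsilon)$-small perturbations as $L^2$ forcing, and sum over a bounded-overlap partition. Your path $a_\theta=\theta a+(1-\theta)\alpha I$, $\sigma_\theta=\theta\sigma$ is as good as the paper's freezing at $x=0$.
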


To start the proof of Theorem \ref{thm:b2},  we study a simple version of BSPDIE \eqref{eq:3.2} whose coefficients $a$ and $\sigma$ are independent of the variable $x$.
\begin{prop}\label{prop:c1}
Assume that Assumptions \ref{ass:b1}--\ref{mz4} hold and the coefficients \( a, \sigma \) are independent of \( x \).
If $f_0 \in \mathbb{H}^0$ and $\varphi \in L^2_{\mathscr{F}_T}(\Omega; H^1)$, then
BSPDIE \eqref{eq:3.2} admits a unique strong solution $(u,q,r)\in \mathbb{S}^2(H^1) \cap \mathbb{H}^2, \times \mathbb{H}^1\times \mathbb{H}^{\nu,1}$,
and the estimate \eqref{eq:3.18} holds.
\end{prop}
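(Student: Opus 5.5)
The plan is to apply Theorem \ref{thm:1.2} once more, now with the shifted Gelfand triple $(V,H,V^*)=(H^2,H^1,H^0)$, where the pairing $\langle\cdot,\cdot\rangle$ between $H^0$ and $H^2$ is the extension of the $H^1$ inner product, i.e. $\langle \xi,\eta\rangle=(\xi,\eta)_0-(\xi,\Delta\eta)_0$ for $\xi\in H^0$, $\eta\in H^2$. Since $a,\sigma$ do not depend on $x$, the non-divergence form \eqref{eq:3.2} coincides with the divergence form: $a^{ij}D_{ij}u=D_i(a^{ij}D_ju)$ and $\sigma^iD_iq=D_i(\sigma^iq)$. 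This is the key structural fact, because derivatives then commute with the principal part. I would therefore define
\[
A(t)u:=a^{ij}(t)D_{ij}u+b^iD_iu+cu+\int_Z\big[u(\cdot+k)-u-k^iD_iu\big]\nu(de),
\]
\[
f(t,u,q,r):=\sigma^i(t)D_iq+vq+f_0,\qquad g(t,u,q,r):=\int_Z\big[r(e,\cdot+k)-r(e,\cdot)\big]\nu(de),
\]
and then verify Assumption \ref{ass:1.1} in this triple. Once that is done, Theorem \ref{thm:1.2} gives existence, uniqueness and estimate \eqref{eq:131}, which is exactly \eqref{eq:3.18}, since $\|f_0\|_{V^*}=\|f_0\|_0$ and $\|\varphi\|_H=\|\varphi\|_1$. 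A solution of the BSEE in this triple satisfies the equation in $H^0$, hence a.e. in $x$, so it is a strong solution in the sense of Definition \ref{def:3.2}.

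Boundedness (A.3) is immediate: $A(t)$ maps $H^2$ to $H^0$ with norm controlled by $L$ and $\nu(Z)$, using the change of variables with Jacobian bound $\delta$ to control $u(\cdot+k)$ in $L^2$. For coercivity (A.4) I would compute $\langle A\eta,\eta\rangle=(A\eta,\eta)_0+\sum_l(D_lA\eta,D_l\eta)_0$ for smooth $\eta$. The principal part gives $-\alpha\|D\eta\|_0^2-\alpha\|D^2\eta\|_0^2$, using that $a$ is constant in $x$, so that $D_l$ commutes with $a^{ij}D_{ij}$. The terms $b^iD_i\eta$ and $c\eta$ are paired with $\eta-\Delta\eta$, so they need only $L^\infty$ bounds on $b,c$, giving $\varepsilon\|\eta\|_2^2+C\|\eta\|_1^2$ without differentiating $b,c$. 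The jump term $\int_Z[\eta(\cdot+k)-\eta-k^iD_i\eta]\nu(de)$ is likewise paired with $\eta-\Delta\eta$ and bounded in $L^2$ by $C(L,\delta,\nu(Z))\|\eta\|_1$. This gives $|\langle\cdot,\eta\rangle|\le C\|\eta\|_1\|\eta\|_2\le\varepsilon\|\eta\|_2^2+C\|\eta\|_1^2$. Choosing $\varepsilon$ small yields $\langle A\eta,\eta\rangle\le-\frac{\alpha}{2}\|\eta\|_2^2+\lambda\|\eta\|_1^2$, and density extends this to $H^2$.

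For (A.5), $g$ must be Lipschitz from $M^{\nu,2}(Z;H^1)$ to $H^1$. Its $L^2$ part is as in Theorem \ref{thm:3.1}, while the gradient $D_l[r(\cdot+k)]=(I+\partial_xk)^{T}(Dr)(\cdot+k)$ requires a bound on $\partial_xk$. I would read Assumption \ref{mz4} as including boundedness of $\partial_x k$, since it is implicitly needed already for the Jacobian condition. The term $f$ is paired as $\langle\sigma^iD_i\delta q,u\rangle=(\sigma^iD_i\delta q,u)_0-(\sigma^iD_i\delta q,\Delta u)_0$. Because $\sigma$ is constant in $x$, integrating by parts moves $D_i$ onto $u$, giving $|(\sigma^i\delta q,D_i\Delta u)|$-type terms. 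The cleaner route is to write $(\sigma^iD_i\delta q,\Delta u)_0=-\sum_l(\sigma^iD_l\delta q,D_{il}u)_0$, bounded by $\kappa\|\delta q\|_1\|u\|_2$, with the lower-order parts giving $C\|\delta q\|_1\|u\|_1$. The $vq$ term only needs $\|v\,\delta q\|_0\le L\|\delta q\|_0$, paired with $u$ in $H^0$ versus $H^2$, which is fine.

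The main obstacle is the exact constant in (A.4)/(A.5). The BSEE condition requires $\kappa<\sqrt{\alpha'}$, where $\alpha'$ is the coercivity constant actually obtained, and losing $\alpha$ to $\alpha/2$ breaks $\kappa<\sqrt\alpha$. To handle this, I would keep the principal part sharp, $-\alpha\|D\eta\|_0^2-\alpha\|D^2\eta\|_0^2=-\alpha\|\eta\|_2^2+\alpha\|\eta\|_0^2$, and absorb the lower-order terms with $\varepsilon$ arbitrarily small. This gives coercivity constant $\alpha-\varepsilon$, and since $\kappa<\sqrt\alpha$ strictly, a small enough $\varepsilon$ preserves $\kappa<\sqrt{\alpha-\varepsilon}$. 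The same care would also repair the analogous step in Theorem \ref{thm:3.1}.
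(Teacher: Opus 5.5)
Your route differs from the paper's: you apply Theorem~\ref{thm:1.2} once, to the full operator, in the triple $(H^2,H^1,H^0)$. As written, it fails at (A.5). You rightly note that the coefficient of $\|q_1-q_2\|_H\|u\|_V$ must stay below the square root of the coercivity constant, but you count only $\sigma$ in that slot. In this triple the pairing is $\langle\xi,u\rangle=(\xi,(I-\Delta)u)_0$, so $vq$ contributes $(v\,\delta q,u)_0-(v\,\delta q,\Delta u)_0$. Since $v$ is merely bounded, no derivative can be moved off $u$. The best bound is $L\|\delta q\|_0\|D^2u\|_0$, which adds a term of order $L$ to that same slot.

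This cannot be traded for $\varepsilon\|\delta q\|_1\|u\|_2+C_\varepsilon\|\delta q\|_1\|u\|_1$: take $v=L\cos(Nx_1)$, $q=\phi$, $u=N^{-2}\phi\cos(Nx_1)$ and let $N\to\infty$. You would therefore need $\kappa+cL<\sqrt{\alpha}$, which is not assumed. The terms $b^iD_iu+cu$ in $A$ are harmless only because the same function sits on both sides, so Young's inequality gives $\varepsilon\|u\|_2^2+C\|u\|_1^2$. For $q$ it would leave $C_\varepsilon\|q\|_0^2$, which the $H^1$-level abstract theorem has no way to absorb.

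The $r$-jump term has the same defect if you put it in $f$ (coefficient $C(\delta,\nu(Z))$). Putting it in $g$, as you do, requires $\partial_xk$ to be bounded. That is an extra hypothesis, since a lower bound on $|\det(I+\partial_xk)|$ does not provide it, so you would be proving a weaker statement.

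The paper avoids both problems by splitting the argument.
\begin{itemize}
\item Step 1 uses the shifted triple only for $a^{ij}D_{ij}u+\sigma^iD_iq+f_0$. There the $q$-slot coefficient is exactly $\kappa$ against coercivity constant $\alpha$.
\item Step 2 takes the weak solution from Theorem~\ref{thm:3.1} and freezes every lower-order term, including $vq$ and $\int_Z[r(\cdot+k)-r]\nu(de)$, into a forcing $\tilde F\in\mathbb{H}^0$. This needs only $L^\infty$ bounds and the Jacobian lower bound. It then solves with Step 1 and identifies the two solutions by uniqueness of weak solutions.
\end{itemize}
The control of $|||q|||_0$ and $|||r|||_{\nu,0}$ that your argument lacks comes from that $H^0$-level estimate.

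If you want to keep a single application of Theorem~\ref{thm:1.2}, you can repair your route with $\mu$-weighted equivalent norms ($\mu\ge1$):
\[
\|u\|_H^2=\mu\|u\|_0^2+\|Du\|_0^2,\qquad \|u\|_V^2=\mu^2\|u\|_0^2+\mu\|Du\|_0^2+\|D^2u\|_0^2,
\]
with pairing $(\xi,(\mu-\Delta)u)_0$. Then:
\begin{itemize}
\item the principal part still gives $-\alpha\|u\|_V^2+\alpha\mu\|u\|_H^2$;
\item the remaining part of $A$ costs $\varepsilon\|u\|_V^2+C(\mu,\varepsilon)\|u\|_H^2$;
\item the $\sigma$-term still gives $\kappa\|\delta q\|_H\|u\|_V$;
\item $vq$ and the $r$-jump term, now placed in $f$, add at most $2L\mu^{-1/2}$ and $C(\delta,\nu(Z))\mu^{-1/2}$ to the respective slots.
\end{itemize}
Choosing $\varepsilon$ small and then $\mu$ large restores (A.5) without any assumption on $\partial_xk$.
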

\begin{proof}
The proof is divided into two steps.

\textbf{Step 1.} In this step, we consider an even more simple version of BSPDIE \eqref{eq:3.2} with all lower-order coefficients vanish, i.e.,
\[
b = 0, \quad c = 0, \quad v = 0, \quad k = 0.
\]

In this case, BSPDIE \eqref{eq:3.2} reduces to
\begin{equation}\label{eq:3.7}
\begin{aligned}
u(t,x) = &\varphi(x) + \int_t^T \bigg(
a^{ij}(s)D_{ij} u(s,x) + \sigma^i(t) D_{i} q(s,x) + f_0(s,x) \bigg) ds \\
& - \int_t^T q(s,x)dW_s - \int_t^T \int_Z r(s,e,x)\tilde{\mu}(de,ds).
\end{aligned}
\end{equation}
We apply Theorem~\ref{thm:1.2} with the Gelfand triple $(V, H, V^*)
=(H^2, H^1, L^2)$.
%where the identification \( V^* = H^0 = L^2 \) ensures that all pairings are properly defined within %standard Sobolev spaces.

This setting is well suited for second-order equations. The operator \(A(t)\), $t\in[0,T]$, is a mapping from \(H^2\) to \(L^2\), and the forcing term \(f_0(t)\), $t\in[0,T]$, lies in \(L^2\), which allows us to define the variational formulation in the dual space \(V^*=L^2\). Moreover, the energy estimates can be carried out within the $H^1$ framework, where the main solution resides.

For any $t\in[0,T]$ and \((u, q, r) \in H^2 \times H^1 \times M^{\nu,2}(Z; H^1)\), we define related operators:
\[
A(t)u := a^{ij}(t) D_{ij} u, \quad f(t,u,q,r) := \sigma^i(t) D_{i} q + f_0(t), \quad g(t,u,q,r) := 0.
\]
Then for any $\eta\in H^2$, the dual pairings of operators are
\[
\langle A(t)u, \eta \rangle = -\left( a^{ij}(t) D_{i} u, D_{j} \eta \right)_1\ \ \ \text{and}\ \ \
\langle f(t,u,q,r), \eta \rangle = -\left( \sigma^i(t) q, D_{i} \eta \right)_1 + \left( f_0(t), (I - \Delta)\eta \right)_0.
\]
Here the use of the operator \(I - \Delta\) ensures that the \(L^2\)-function \(f_0(t)\) can be paired with the second-order test function $\eta$.

We verify that \(A\) satisfies the coercivity condition (A.4) in Assumption \ref{ass:1.1}. By integration by parts and the uniform ellipticity of \(a^{ij}\), we have
\begin{align*}
\langle A(t)\eta, \eta \rangle &= -\int a^{ij}(t) \, D_{i} \eta(x) \, D_{j} \eta(x) \, dx - \int a^{ij}(t) \, D_{i\ell} \eta(x) \, D_{j\ell} \eta(x) \, dx \\
&\leq -\alpha \| D \eta \|_0^2 - \alpha \sum_{\ell=1}^d \| D(D_{\ell} \eta) \|_0^2 = -\alpha \left( \| \eta \|_1^2 + \sum_{\ell=1}^d \| D(D_{\ell} \eta) \|_0^2 \right) = -\alpha \| \eta \|_2^2 + \alpha \| \eta \|_1^2.
\end{align*}

Let's turn to the Lipschitz continuity condition of non-homogeneous term \( f\).
For any \( q_1, q_2 \in H^1 \) and \( \eta \in H^2 \),
since \( \sigma^i \) is independent of the spatial variable \( x \), we have
\begin{align*}
\langle f(t, u, q_1, r) - f(t, u, q_2, r), \eta \rangle &= - \left( \sigma^i(t)(q_1 - q_2), D_{i} \eta \right)_1\\
&= - \int \sigma^i(t)(q_1 - q_2) D_{i} \eta \, dx - \int D[\sigma^i(t)(q_1 - q_2)] \cdot D (D_{i} \eta)dx\\
&= - \sigma^i(t) \left[ \int (q_1 - q_2) D_{i} \eta \, dx + \int D(q_1 - q_2) \cdot D (D_{i} \eta)dx \right]\\
%Next, applying the condition \( (\sigma^{i}(t,x))(\sigma^{i}(t,x))^{*} \leq \kappa^2 I \) and using the %Cauchy-Schwarz inequality, we obtain:
&\leq \kappa \left( \| q_1 - q_2 \|_0 \cdot \| D \eta\|_0 + \| D (q_1 - q_2) \|_0 \cdot \| D^2 \eta \|_0 \right).
\end{align*}
Notice
\[
\| q_1 - q_2 \|_0 \leq \| q_1 - q_2 \|_1 \quad \text{and} \quad \| D \eta \|_0 \leq \| \eta \|_2.
\]
We conclude
\[
|\langle f(t, u, q_1, r) - f(t, u, q_2, r), u \rangle| \leq 2\kappa \| q_1 - q_2 \|_1 \cdot \| \eta \|_2,
\]
which implies the Lipschitz continuity of \( f(t, u, q, r) \) with respect to \( q \).

By Theorem \ref{thm:1.2}, we know that BSPDIE \eqref{eq:3.7} admits a unique solution \((u, q, r) \in \mathbb{S}^2(H^1) \cap \mathbb{H}^2 \times \mathbb{H}^1 \times \mathbb{H}^{\nu,1}\) and a priori estimate \eqref{eq:3.18} holds true. In particular, for any test function \(\eta \in C_0^\infty(\mathbb{R}^d)\), the triple \((u,q,r)\) satisfies the variational identity
\[
\begin{aligned}
(u(t), \eta)_1 &= (\varphi, \eta)_1 + \int_t^T \Big[
- \big( a^{ij}(s) D_{i} u(s), D_{j} \eta \big)_1
- \big( \sigma^i(s) q(s), D_{i} \eta \big)_1
+ \big( f_0(s), (I - \Delta)\eta \big)_0 \Big] ds \\
&\quad - \int_t^T (q(s), \eta)_1 \, dW_s
- \int_t^T \int_Z (r(s,e), \eta)_1 \, \tilde{\mu}(de,ds).
\end{aligned}
\]
For the convenience of subsequent arguments, we take the test function in the form \(\eta = (I - \Delta)^{-1} \eta\) instead. Then the variational identity becomes
\[
\begin{aligned}
(u(t), (I - \Delta)^{-1} \eta)_1
=& (\varphi, (I - \Delta)^{-1} \eta)_1 + \int_t^T \Big[
- \big( a^{ij}(s) D_{i} u(s), (I - \Delta)^{-1} D_{j} \eta \big)_1 \\
&- \big( \sigma^i(s) q(s), (I - \Delta)^{-1} D_{i} \eta \big)_1
+ \big( f_0(s), \eta \big)_0 \Big] ds \\
&- \int_t^T (q(s), (I - \Delta)^{-1} \eta)_1 \, dW_s
- \int_t^T \int_Z (r(s,e), (I - \Delta)^{-1} \eta)_1 \, \tilde{\mu}(de,ds).
\end{aligned}
\]
By Green's formula, the above variational form can be equivalently rewritten in the standard \(L^2\) framework, i.e., for any \(t \in [0,T]\) and a.e. \(x \in \mathbb{R}^d\),
\[
\begin{aligned}
(u(t), \eta)_0 &= (\varphi, \eta)_0 + \int_t^T \Big[
- \big( a^{ij}(s) D_{ij} u(s), \eta \big)_0
- \big( \sigma^i(s) D_{i} q(s), \eta \big)_0
+ \big( f_0(s), \eta \big)_0 \Big] ds \\
&\quad - \int_t^T (q(s), \eta)_0 \, dW_s
- \int_t^T \int_Z (r(s,e), \eta)_0 \, \tilde{\mu}(de,ds).
\end{aligned}
\]
This confirms that the triple \((u, q, r)\) is a strong solution to BSPDIE \eqref{eq:3.7}.

Moreover, according to Definitions \ref{def:3.1} and \ref{def:3.2}, a strong solution is also a weak solution, which together with the uniqueness of weak solution in Theorem \ref{thm:1.2} leads to the uniqueness of strong solution to BSPDIE \eqref{eq:3.7}.

%Hence, the triple \((u, q, r)\) is the unique strong solution of equation \eqref{eq:3.7} in the %constructed functional space.

  \textbf{Step 2.} We remove the additional assumptions imposed in Step 1.

Since the coefficients $a$ and $\sigma$ are independent of the spatial variable $x$, BSPDIE \eqref{eq:3.2} can be rewritten in a divergence form
\begin{equation}\label{eq:3.222}
\begin{aligned}
u(t,x) =& \varphi(x) + \int_t^T \Bigg(
      a^{ij}(s)\,D_{ij}u(s,x) + b^i(s,x)\,D_{i}u(s,x) + c(s,x)\,u(s,x) \\
&\ \ \ \ \ \ \ \ \ \ \ \ \ \ \ + \sigma^i(s)\,D_{i}q(s,x) + v(s,x)\,q(s,x) + f_0(s,x) \\
&\ \ \ \ \ \ \ \ \ \ \ \ \ \ \ + \int_{Z}\left[u(s,x+k(s,e,x)) - u(s,x) - k^i(s,e,x)\,D_{i}u(s,x)\right] \nu(de) \\
&\ \ \ \ \ \ \ \ \ \ \ \ \ \ \ + \int_Z \left[ r(s,e,x+k(s,e,x)) - r(s,e,x) \right] \nu(de)
\Bigg) ds \\
& - \int_t^T q(s,x)dW_s - \int_t^T \int_Z r(s,e,x)\tilde{\mu}(de,ds).
\end{aligned}
\end{equation}
By Theorem \ref{thm:3.1}, BSPDIE \eqref{eq:3.222} admits a unique weak solution
$(u, q, r) \in \mathbb{S}^2(H^0) \cap \mathbb{H}^1 \times \mathbb{H}^0 \times \mathbb{H}^{\nu,0}.$

Define
\begin{equation*}
\label{eq:tilde-F}
\begin{aligned}
\tilde{F}(t,x) :=\ & b^i(t,x)\,D_{i} u(t,x) + c(t,x)\,u(t,x) + v(t,x)\,q(t,x) + f_0(t,x) \\
& + \int_Z \left[ u(t,x+k(t,e,x)) - u(t,x) - k^i(t,e,x)\,D_{i} u(t,x) \right] \nu(de) \\
& + \int_Z \left[ r(t,e,x+k(t,e,x)) - r(t,e,x) \right] \nu(de).
\end{aligned}
\end{equation*}
This $\tilde{F}$ is completely determined by the weak solution $(u,q,r)$, and obviously belongs to $\mathbb{H}^0$. Then BSPDIE \eqref{eq:3.222} can be regarded as an equation without all lower-order coefficients
\begin{equation}
\label{eq:tilde-eq}
\begin{aligned}
\tilde{u}(t,x) =&\varphi(x) + \int_t^T  \bigg[ a^{ij}(s)D_{ij} \tilde{u}(s,x) + \sigma^i(s)D_{i} \tilde{q}(s,x) + \tilde{F}(s,x) \bigg] ds \\
& - \int_t^T\tilde{q}(s,x)dW_s - \int_t^T\int_Z \tilde{r}(s,e,x)\tilde{\mu}(de,ds).
\end{aligned}
\end{equation}
By the result in Step 1, BSPDIE \eqref{eq:tilde-eq} admits a unique strong solution $(\tilde{u}, \tilde{q}, \tilde{r})$ in $\mathbb{S}^2(H^1) \cap \mathbb{H}^2 \times \mathbb{H}^1 \times \mathbb{H}^{\nu,1}$, and \eqref{eq:3.18} holds for $(\tilde{u}, \tilde{q}, \tilde{r})$ with $f_0$ replaced by $\tilde{F}$.

Moreover, $(\tilde{u}, \tilde{q}, \tilde{r})$ is also a weak solution to BSPDIE \eqref{eq:tilde-eq}. But we have known that $(u, q, r)$ is a weak solution to \eqref{eq:3.222}, and hence a weak solution to BSPDIE \eqref{eq:tilde-eq}. By the uniqueness of weak solutions to BSPDIE \eqref{eq:tilde-eq}, we conclude that $(\tilde{u}, \tilde{q}, \tilde{r}) = (u, q, r)$ a.s., and the triple $(u, q, r)$ is indeed the unique strong solution to BSPDIE \eqref{eq:3.222}, or equivalently BSPDIE \eqref{eq:3.2} with \( a, \sigma \) independent of \( x \). Consequently, \eqref{eq:3.18} holds for $(u, q, r)$ but with $f_0$ replaced by $\tilde{F}$. Noticing the uniform boundedness of \( b, c, v, k \) and Assumption \ref{ass:b2} (iii), we get from the definition of $\tilde{F}$ that $|||\tilde{F} |||_0^2\leq ||| f_0 |||_0^2$, which implies that \eqref{eq:3.18} holds.
\end{proof}

Next we prove a perturbation result.
\begin{lem}\label{lem:d3}
   Assume that Assumptions \ref{ass:b1}--\ref{ass:b2} hold, the coefficients $b,c,v,k = 0$, and there exists some constant $\epsilon > 0$ such that for any $(\omega,t,x) \in \Omega \times [0,T] \times \mathbb{R}^d$,
    \[
    |a(t,x) - a_0(t)| \leq \epsilon\ \ \ {\text and}\ \ \ |\sigma(t,x) - \sigma_0(t)| \leq \epsilon,
    \]
    where $a_0$ and $\sigma_0$ are independent of \( x \) and satisfy Assumptions \ref{ass:b1}--\ref{mz4}. If $f_0 \in \mathbb{H}^0$ and $\varphi \in L^2_{\mathscr{F}_T}(\Omega; H^1)$, then there exists a constant $\delta > 0$ depending on $\kappa, \alpha, T$ such that when $\epsilon \leq \delta$, BSPDIE \eqref{eq:3.2} admits a unique strong solution \((u, q, r) \in \mathbb{S}^2({H}^1) \cap \mathbb{H}^2 \times \mathbb{H}^1 \times \mathbb{H}^{\nu,1}\), and the estimate \eqref{eq:3.18} holds.
\end{lem}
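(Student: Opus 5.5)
We argue by a method of continuity (or, equivalently, a fixed-point argument) built on Proposition \ref{prop:c1}, treating the $x$-dependent parts of $a$ and $\sigma$ as small perturbations of $a_0,\sigma_0$. With $b=c=v=k=0$, BSPDIE \eqref{eq:3.2} becomes
\[
u(t)=\varphi+\int_t^T\Big[a_0^{ij}(s)D_{ij}u+\sigma_0^i(s)D_iq+\big(a^{ij}-a_0^{ij}\big)(s)D_{ij}u+\big(\sigma^i-\sigma_0^i\big)(s)D_iq+f_0(s)\Big]ds-\int_t^Tq\,dW_s-\int_t^T\!\!\int_Z r\,\tilde\mu(de,ds).
\]
Let $\mathcal{K}:=\mathbb{S}^2(H^1)\cap\mathbb{H}^2\times\mathbb{H}^1\times\mathbb{H}^{\nu,1}$, with the norm on the left-hand side of \eqref{eq:3.18}. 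For $(\tilde u,\tilde q,\tilde r)\in\mathcal{K}$, define $\Phi(\tilde u,\tilde q,\tilde r)=(u,q,r)$ as the strong solution, given by Proposition \ref{prop:c1} applied with constant-in-$x$ coefficients $(a_0,\sigma_0)$, of the equation with free term
\[
F:=f_0+\big(a^{ij}-a_0^{ij}\big)D_{ij}\tilde u+\big(\sigma^i-\sigma_0^i\big)D_i\tilde q .
\]
Since $|a-a_0|,|\sigma-\sigma_0|\le\epsilon$ pointwise, we have $F\in\mathbb{H}^0$ with $|||F|||_0\le|||f_0|||_0+C\epsilon(|||\tilde u|||_2+|||\tilde q|||_1)$. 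So $\Phi$ maps $\mathcal{K}$ into itself.

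For the contraction, the difference of two images solves the same constant-coefficient equation with zero terminal value and free term $(a-a_0)D_{ij}(\tilde u_1-\tilde u_2)+(\sigma-\sigma_0)D_i(\tilde q_1-\tilde q_2)$. Estimate \eqref{eq:3.18}, applied through Proposition \ref{prop:c1}, then gives
\[
\|\Phi(\tilde u_1,\tilde q_1,\tilde r_1)-\Phi(\tilde u_2,\tilde q_2,\tilde r_2)\|_{\mathcal K}^2\le C_0\,d\,\epsilon^2\big(|||\tilde u_1-\tilde u_2|||_2^2+|||\tilde q_1-\tilde q_2|||_1^2\big),
\]
where $C_0$ is the constant of \eqref{eq:3.18} for the frozen coefficients. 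Choosing $\delta$ with $C_0 d\,\delta^2\le 1/4$ makes $\Phi$ a contraction, and Banach's fixed point theorem yields a strong solution.

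For the a priori estimate, apply \eqref{eq:3.18} to the fixed point:
\[
\|(u,q,r)\|_{\mathcal K}^2\le C_0\big(2|||f_0|||_0^2+2C\epsilon^2\|(u,q,r)\|_{\mathcal K}^2+E\|\varphi\|_1^2\big).
\]
Absorbing the middle term for $\epsilon\le\delta$ gives \eqref{eq:3.18}. Uniqueness follows from the same absorption applied to the difference of two strong solutions, since $f_0=0$ and $\varphi=0$ for that difference. Alternatively, a strong solution is a weak solution, and weak solutions are unique by Theorem \ref{thm:3.1}.

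The main obstacle is ensuring that the constant $C_0$ in \eqref{eq:3.18} for the frozen problem depends only on $\kappa,\alpha,T$ (and $L$), and not on $\epsilon$ or the particular $a_0,\sigma_0$. Otherwise $\delta$ could not be chosen uniformly. To check this, I would trace Step 1 of Proposition \ref{prop:c1}: there the coercivity constant is $\alpha$ and the Lipschitz constant of the $q$-term is $\kappa$ (one needs $|\sigma_0|\le\kappa<\sqrt\alpha$, possibly with a slightly reduced margin). Hence Theorem \ref{thm:1.2} furnishes a constant depending only on these parameters. A secondary point is that the $\mathbb{S}^2(H^1)$ component is only controlled through \eqref{eq:3.18}, which is fine because the contraction estimate uses the full $\mathcal K$-norm on the left-hand side.
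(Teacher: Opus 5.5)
Your proposal is correct and follows the paper's proof essentially step for step. You freeze the leading coefficients at $a_0,\sigma_0$, move $(a-a_0)^{ij}D_{ij}\tilde u+(\sigma-\sigma_0)^iD_i\tilde q$ into the free term, define the solution map through Proposition \ref{prop:c1}, get a contraction for small $\epsilon$ from \eqref{eq:3.18}, and recover the estimate by absorption; your factor $\epsilon^2$ is the natural one, while the paper writes a cruder $\epsilon C$. Drop your alternative uniqueness argument via Theorem \ref{thm:3.1}: with $a,\sigma$ merely measurable in $x$, a strong solution of the non-divergence equation \eqref{eq:3.2} is not a weak solution of the divergence form \eqref{eq:2.1}. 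Your primary argument (absorption applied to the difference, equivalently uniqueness of the fixed point) already suffices.
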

\begin{proof}
    For given $(\bar{u}, \bar{q}, \bar{r}) \in \mathbb{S}^2(H^1) \cap \mathbb{H}^2 \times \mathbb{H}^1 \times \mathbb{H}^{\nu,1}$, consider BSPDIE
    \begin{equation}\label{eq:d2}
\begin{aligned}
u(t,x)=& \varphi(x) + \int_t^T \Big[ a_0^{ij}(s) D_{ij} u(s,x) + \sigma_0^i(s) D_{i} q(s,x)+ \big(a^{ij}(s,x) - a_0^{ij}(s)\big) D_{ij} \bar{u}(s,x) \\
& \ \ \ \ \ \ \ \ \ \ \ \ \ \ \ \ + \big(\sigma^i(s,x) - \sigma_0^i(s)\big) D_{i} \bar{q}(s,x) + f_0(s,x) \Big]ds \\
&- \int_t^T q(s,x)dW_s - \int_t^T \int_Z r(s,e,x)\tilde{\mu}(de,ds).
\end{aligned}
\end{equation}
By Proposition \ref{prop:c1}, the above BSPDIE admits a unique strong solution $(u,q,r)\in \mathbb{S}^2(H^1) \cap \mathbb{H}^2 \times \mathbb{H}^1 \times \mathbb{H}^{\nu,1}$. Construct a mapping $\Psi$ from $\mathbb{S}^2(H^1) \cap \mathbb{H}^2 \times \mathbb{H}^1 \times \mathbb{H}^{\nu,1}$ into itself by defining $\Psi(\bar{u}, \bar{q}, \bar{r}) := (u, q, r)$.
For $(\bar{u}_i, \bar{q}_i, \bar{r}_i) \in \mathbb{S}^2 (H^1)\cap \mathbb{H}^2 \times \mathbb{H}^1 \times \mathbb{H}^{\nu,1}$, set $(u_i, q_i, r_i)= \Psi(\bar{u}_i, \bar{q}_i, \bar{r}_i)$, $i = 1, 2$. Then applying the estimate \eqref{eq:3.18} to BSPDIE \eqref{eq:d2}, we have
    \[
    E\left[\sup_{0 \leq t \leq T} \| u_1(t) - u_2(t) \|_{H^1}^2 \right] + ||| u_1 - u_2 |||_2^2 + ||| q_1 - q_2 |||_1^2 + ||| r_1 - r_2 |||_{\nu,1}^2
    \]
    \[
    \leq \epsilon C \left( E\left[\sup_{0 \leq t \leq T} \| \bar{u}_1(t) - \bar{u}_2(t) \|_{H^1}^2 \right] + ||| \bar{u}_1 - \bar{u}_2 |||_2^2 + ||| \bar{q}_1 - \bar{q}_2 |||_1^2 + ||| \bar{r}_1 - \bar{r}_2 |||_{\nu,1}^2 \right),
    \]
    where $C = C(\kappa, \alpha, T)$ is a constant.
Take $\delta= \frac{1}{2C}$, and then $\Psi$ is a contraction mapping on the Banach space $\mathbb{S}^2(H^1) \cap \mathbb{H}^2 \times \mathbb{H}^1 \times \mathbb{H}^{\nu,1}$. By the fixed point theorem, it follows that BSPDIE \eqref{eq:3.2} admits a unique strong solution
$(u, q, r)\in \mathbb{S}^2(H^1) \cap \mathbb{H}^2 \times \mathbb{H}^1 \times \mathbb{H}^{\nu,1}$.

Then applying the estimate \eqref{eq:3.18} again to the BSPDIE \eqref{eq:d2} with $(\bar u,\bar q,\bar r)$ replaced by $(u,q,r)$, we have
\begin{align*}
&\mathbb{E}\left[\sup_{0 \leq t \leq T} \|u\|_{H^1}^2 + \int_0^T \left( \|u\|_{H^2}^2 + \|q\|_{H^1}^2 \right) dt + \int_0^T \int_Z \|r\|_{H^1}^2 \nu(de) dt \right] \\
\leq &\epsilon C\mathbb{E}\left[\sup_{0 \leq t \leq T} \|u\|_{H^1}^2 + \int_0^T \left( \|u\|_{H^2}^2 + \|q\|_{H^1}^2 \right) dt + \int_0^T \int_Z \|r\|_{H^1}^2 \nu(de) dt \right] \\
&+ C\left( \mathbb{E} \int_0^T \|f_0\|_{L^2}^2 dt + \mathbb{E} \|\varphi\|_{H^1}^2 \right).
\end{align*}
Noticing $\epsilon C (2C)^{-1}<1$,  we get the desired a priori estimate \eqref{eq:3.18} for the strong solution $(u, q, r)$.
\end{proof}

Then we prove the a priori estimate for the general form of BSPDIE \eqref{eq:3.2} under the assumption on the existence of strong solution.
\begin{lem}\label{lem:d4}
Assume that the conditions of Theorem \ref{thm:b2} hold. If \((u, q, r) \in \mathbb{S}^2(H^1) \cap \mathbb{H}^2 \times \mathbb{H}^1\times \mathbb{H}^{\nu,1}\) is a strong solution of BSPDIE \eqref{eq:3.2}, then there exists a constant \(C > 0\) depending  on \(\alpha\), \(K\), \(\kappa\), \(T\), and the function \(\gamma\) in Assumption \ref{ass:b3}, such that
\begin{equation}\label{eq:3.29}
E\left[\sup_{0 \leq t \leq T} \| u(t) \|_1^2 \right] + ||| u |||_2^2 + ||| q |||_1^2 + ||| r |||_{\nu,1}^2
\leq C \left( ||| f_0 |||_0^2 + E\|\varphi\|_1^2 \right).
\end{equation}

\end{lem}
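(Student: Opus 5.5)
The estimate \eqref{eq:3.29} will be proved by the classical localization (freezing of coefficients) argument, using Lemma \ref{lem:d3} as the local building block and Theorem \ref{thm:3.1} to absorb lower-order terms.

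\textbf{Step 1: choice of the localization scale.} Let $\delta_0=\delta(\kappa,\alpha,T)$ be the threshold of Lemma \ref{lem:d3}. By Assumption \ref{ass:b3}, choose $\rho>0$ with $\gamma(2\rho)\le\delta_0$. Fix a smooth cutoff $\zeta\in C_0^\infty(B_{2\rho}(0))$ with $\zeta\equiv1$ on $B_\rho(0)$ and $0\le\zeta\le1$. Cover $\mathbb{R}^d$ by the balls $B_\rho(z)$, $z\in\rho\mathbb{Z}^d/\sqrt d$, and set $\zeta_z(x)=\zeta(x-z)$. This family has bounded overlap, depending only on $d$, and $\sum_z\zeta_z^2\ge1$.

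On $\operatorname{supp}\zeta_z$ we have $|a(t,x)-a(t,z)|+|\sigma(t,x)-\sigma(t,z)|\le\delta_0$. We then modify the coefficients to
\[
a_z(t,x):=a\bigl(t,z+\pi(x-z)\bigr),
\]
where $\pi$ is the radial retraction onto $\bar B_{2\rho}(0)$, and define $\sigma_z$ in the same way. These modified coefficients agree with $a,\sigma$ on $\operatorname{supp}\zeta_z$, satisfy $|a_z-a(\cdot,z)|\le\delta_0$ everywhere, and keep ellipticity and the bounds of Assumptions \ref{ass:b2}--\ref{mz4}. We take $a_0=a(\cdot,z)$ and $\sigma_0=\sigma(\cdot,z)$.

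\textbf{Step 2: localized equations.} Set $u_z=\zeta_zu$, $q_z=\zeta_zq$, $r_z=\zeta_zr$. Multiplying \eqref{eq:3.2} by $\zeta_z$ shows that $(u_z,q_z,r_z)$ is a strong solution of the equation of Lemma \ref{lem:d3}, with coefficients $(a_z,\sigma_z)$, terminal value $\zeta_z\varphi$, and forcing term
\[
F_z=\zeta_z\bigl(b^iD_iu+cu+vq+f_0+J(u)+K(r)\bigr)-2a^{ij}D_i\zeta_zD_ju-a^{ij}D_{ij}\zeta_z\,u-\sigma^iD_i\zeta_z\,q .
\]
Here $J$ and $K$ denote the two jump integrals. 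This requires $(u_z,q_z,r_z)$ to lie in the right space, which holds because $\zeta_z$ is smooth and bounded.

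By the uniqueness part of Lemma \ref{lem:d3}, $(u_z,q_z,r_z)$ coincides with the solution given there, so estimate \eqref{eq:3.18} applies:
\[
E\sup_t\|u_z\|_1^2+|||u_z|||_2^2+|||q_z|||_1^2+|||r_z|||_{\nu,1}^2\le C\bigl(|||F_z|||_0^2+E\|\zeta_z\varphi\|_1^2\bigr).
\]

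\textbf{Step 3: summation.} Summing over $z$ and using the bounded overlap, $\sum_z\|\zeta_z w\|_k^2\sim\|w\|_k^2$ up to lower-order terms with constants depending on $\rho$, i.e.\ on $\gamma$. This gives
\[
E\sup_t\|u\|_1^2+|||u|||_2^2+|||q|||_1^2+|||r|||_{\nu,1}^2\le C\bigl(|||f_0|||_0^2+E\|\varphi\|_1^2+|||u|||_1^2+|||q|||_0^2+|||r|||_{\nu,0}^2\bigr).
\]
The jump terms are bounded in $L^2$ as in Theorem \ref{thm:3.1}, through the change of variables and Assumption \ref{ass:b2}(iii). Two points need care:
\begin{itemize}
\item The supremum does not commute with the sum. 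To handle this, I will estimate $E\sup_t\|u\|_1^2$ separately at the end, via It\^o's formula (Lemma \ref{lem:c1}) in the triple $(H^2,H^1,L^2)$ and BDG, once the other norms are controlled.
\item The term $\zeta_zJ(u)$ involves $u(x+k)$ evaluated outside the ball. Its $L^2$ norm is still controlled by $\|u\|_1$ globally, and summing over $z$ costs only the overlap constant.
\end{itemize}

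\textbf{Step 4: closing the estimate.} The lower-order terms on the right are exactly those controlled by the weak-solution estimate of Theorem \ref{thm:3.1}. A strong solution of \eqref{eq:3.2} is a weak solution of \eqref{eq:2.1}, after rewriting $a^{ij}D_{ij}u=D_i(a^{ij}D_ju)-D_ia^{ij}D_ju$ and similarly for $\sigma$. However, $a$ is only uniformly continuous, not differentiable, so this rewriting is not available directly.

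I therefore expect the main obstacle to be absorbing $|||q|||_0$ and $|||u|||_1$. I would try an interpolation argument first: $\|u\|_1^2\le\varepsilon\|u\|_2^2+C_\varepsilon\|u\|_0^2$, and similarly for $q$ with $\|q\|_0$. This reduces everything to an $L^2$-level bound on $u,q,r$, which I would obtain by It\^o's formula for $\|u\|_0^2$. In that computation the $D_{ij}u\cdot u$ term is handled locally via the same partition, where $a$ is nearly constant, or else by a Gronwall argument in $t$ with $\varepsilon$-absorption of $|||u|||_2$ and $|||q|||_1$. With this $L^2$ bound in hand, the lower-order terms are absorbed into the left side and \eqref{eq:3.29} follows.
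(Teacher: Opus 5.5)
\textbf{Gap in Step 4: the final absorption does not close as you describe it.}

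Your Steps 1--3 are essentially the paper's localization argument. You freeze $a,\sigma$ at $z$, apply Lemma~\ref{lem:d3} to $(\zeta_z u,\zeta_z q,\zeta_z r)$, and sum over a bounded-overlap cover. This correctly gives $X\le C_1\bigl(F+|||u|||_1^2+|||q|||_0^2+|||r|||_{\nu,0}^2\bigr)$, where $X$ is the left side of \eqref{eq:3.29} and $F:=|||f_0|||_0^2+E\|\varphi\|_1^2$.

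The problem is Step 4, where the lemma is actually decided. Without derivatives of $a$ and $\sigma$, the $L^2$ It\^o identity can only treat $(u,a^{ij}D_{ij}u)$ and $(u,\sigma^iD_iq)$ by Young's inequality. It therefore gives only
\[
E\|u(t)\|_0^2+E\int_t^T\bigl(\|q\|_0^2+\|r\|_{\nu,0}^2\bigr)ds\le F+\varepsilon X+C_\varepsilon\int_t^T E\|u(s)\|_0^2\,ds,\qquad C_\varepsilon\sim\varepsilon^{-1}.
\]
In your Step 3, $X$ is taken over all of $[0,T]$. Gronwall then gives $|||u|||_0^2\le Te^{C_\varepsilon T}(F+\varepsilon X)$. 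Feeding this back into Step 3 (after your interpolation) leaves $X$ on the right with a coefficient of order $\varepsilon C_\varepsilon T e^{C_\varepsilon T}$. That is not small for any choice of $\varepsilon$ unless $T$ is small, so the loop does not close.

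Your alternative of freezing $a$ locally inside the energy identity has the same defect. It leaves residuals $\gamma(2\rho)\|u\|_0\|u\|_2$ and $\gamma(2\rho)\|u\|_0\|q\|_1$. These must again be absorbed through $X$, after a Gronwall step whose constant depends on $\rho$.

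\textbf{The missing idea: localize in time as well.} Steps 2--3 and the It\^o identity hold on every interval $[t,T]$, with constants uniform in $t$. Let $X(t)$ be the left side of \eqref{eq:3.29} restricted to $[t,T]$. Then
\[
X(t)\le C_1\Bigl(F+\int_t^T E\|u(s)\|_1^2\,ds+E\int_t^T\bigl(\|q\|_0^2+\|r\|_{\nu,0}^2\bigr)ds\Bigr).
\]
Insert the It\^o bound on the same interval, so that $\varepsilon X(t)$ replaces $\varepsilon X$, and fix $\varepsilon$ with $C_1\varepsilon\le\frac12$. Since $E\|u(t)\|_1^2\le X(t)$, this yields $E\|u(t)\|_1^2\le C\bigl(F+\int_t^T E\|u(s)\|_1^2\,ds\bigr)$, and Gronwall closes. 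The constant $C_\varepsilon$ now enters only the Gronwall exponent, after $\varepsilon$ has already been fixed.

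This is exactly how the paper concludes: \eqref{mz1} followed by Gronwall on $E\|u(t)\|_1^2$. In particular, you do not need the interpolation $\|u\|_1^2\le\varepsilon\|u\|_2^2+C_\varepsilon\|u\|_0^2$, a separate $L^2$-level bound, or Theorem~\ref{thm:3.1}.
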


\begin{proof}
\textbf{Step 1: energy estimate via It\^{o}'s formula.}
 According to the infinite-dimensional It\^{o}'s formula with jumps, we have
\begin{align*}
\|u(t,\cdot)\|^2_0=&\|\phi\|^2 _0+ 2\int_t^T\int_{\mathbb{R}^d}u(s,x)\big[a^{ij}(s,x)D_{ij}u(s,x)
+b^i(s,x)D_{i}u(s,x)+c(s,x)u(s,x)\\
&\ \ \ \ \ \ \ \ \ \ \ \ \ \ \ \ \ \ \ \ \ \ \ \ \ \ \ \ \ \ +\int_Z\left(u(s,x + k(s,e,x))-u(s,x)-k^i(s,e,x)D_{i}u(s,x)\right)\nu(de)\\
&\ \ \ \ \ \ \ \ \ \ \ \ \ \ \ \ \ \ \ \ \ \ \ \ \ \ \ \ \ \ +\sigma^i(s,x)D_{i}q(s,x)+v(s,x)q(s,x)+f_0(s,x)\\
&\ \ \ \ \ \ \ \ \ \ \ \ \ \ \ \ \ \ \ \ \ \ \ \ \ \ \ \ \ \ +\int_Z\left(r(s,e,x + k(s,e,x))-r(s,e,x)\right)\nu(de)\big]dxds\\
&-\int_t^T\int_{\mathbb{R}^d}|q(s,x)|^2dxds- 2\int_t^T\int_{\mathbb{R}^d}u(s,x)q(s,x)dx dW_s\\
&-\int_t^T\int_Z\int_{\mathbb{R}^d}\left(- 2u(s,x)r(s,e,x)-\|r(s,e,x)\|^2\right)dx\tilde{\mu}(de,ds)\\
&-\int_t^T\int_Z\int_{\mathbb{R}^d}|r(s,e,x)|^2dx\nu(de)ds.
\end{align*}
Taking the expectation of both sides and applying Cauchy-Schwarz inequality, we further have
\begin{eqnarray}\label{eq:energy-ineq}
  \begin{split}
  &\|u(t,\cdot)\|^2_0+||| q |||_0^2 + ||| r |||_{\nu,0}^2\\
\leq& \|\phi\|_0^2 + 2\mathbb{E} \big[ \int_t^T \int_{\mathbb{R}^d} u(s,x) \bigg( a^{ij}(s,x)D_{ij}u(s,x) + b^i(s,x)D_{i} u(s,x)+ c(s,x) u(s,x) \\
&\ \ \ \ \ \ \ \ \ \ \ \ \ \ \ \ \ \ \ \ \ \ \ \ \ + \int_Z \left( u(s,x + k(s,e,x)) - u(s,x) - k^i(s,e,x)D_{i} u(s,x) \right) \nu(de)\\
&\ \ \ \ \ \ \ \ \ \ \ \ \ \ \ \ \ \ \ \ \ \ \ \ \ + \sigma^i(s,x)D_{i} q(s,x)+ v(s,x) q(s,x) + f_0(s,x) \\
&\ \ \ \ \ \ \ \ \ \ \ \ \ \ \ \ \ \ \ \ \ \ \ \ \ + \int_Z \left( r(s,e,x + k(s,e,x)) - r(s,e,x) \right) \nu(de) \bigg) dx\, ds \big]\\
\leq\ & \|\phi\|_0^2 + \varepsilon \left( ||| u |||_2^2 + ||| q |||_1^2 + ||| r |||_{\nu,0}^2 \right) + C(\varepsilon, K)\, ||| u |||_0^2 + ||| f_0 |||_0^2,
  \end{split}
\end{eqnarray}
where \(\varepsilon\) is a small positive number to be specified later. %, and \(C(\varepsilon,K)\) is a constant depending on \(\varepsilon\) and some parameters \(K\).

\noindent\textbf{Step 2: localization and regularization.}

Due to Assumption \ref{ass:b3}, we can choose a sufficiently small constant $\rho>0$ such that for any $(\omega,t,x,y)\in\Omega\times[0,T]\times\mathbb{R}^d\times\mathbb{R}^d$ with $|x-y|\leq4\rho$,
\[
|a(t,x)-a(t,y)|\leq\delta,\quad |\sigma(t,x)-\sigma(t,y)|\leq\delta,
\]
where $\delta$ is the constant in Lemma \ref{lem:d3} depending on $\kappa,\alpha,T$.

For any fixed point $z\in\mathbb{R}^d$, denote the open ball
\[
B_l(z):=\big\{x\in\mathbb{R}^d: |x-z|<l\big\}.
\]
Choose a non-negative cut-off function $\zeta\in C_0^\infty(\mathbb{R}^d)$ satisfying
\[
\operatorname{supp}(\zeta)\subset B_{2\rho}(0)\ \ \ \text{and}\ \ \ \zeta(x)=1\ \ \text{for}\ |x|\leq\rho.
\]
Define the localized cut-off functions
\[
\zeta^z(x):=\zeta(x-z)\ \ \ \text{and}\ \ \ \eta^z(x):=\zeta\left(\frac{x-z}{\rho}\right),
\]
and the localized solution
\[
u^z:=u\zeta^z,\quad q^z:=q\zeta^z,\quad r^z:=r\zeta^z.
\]
By It\^o's formula, it is not difficult to verify that $(u^z, q^z, r^z)$ satisfies a localized BSPDIE
\begin{align}\label{eq:local-BSPDIE}
\mathrm{d} u^z(t,x)
= -\Big[\tilde{a}^{ij}(t,x)D_{ij}u^z
+ \tilde{\sigma}^i(t,x)D_{i}q^z
+ \tilde{F}(t,x)\Big]\mathrm{d} t+ q^z(t,x)\mathrm{d}W_t
+ \int_Z r^z(t,e,x)\tilde{\mu}(\mathrm{d} e,\mathrm{d} t),\nonumber
\end{align}
where the coefficients ahead of highest-order terms are defined as
\[
\begin{aligned}
\tilde{a}^{ij}(t,x) &:= a^{ij}(t,x)\eta^z(x) + a^{ij}(t,z)\big(1-\eta^z(x)\big),\\
\tilde{\sigma}^i(t,x) &:= \sigma^i(t,x)\eta^z(x) + \sigma^i(t,z)\big(1-\eta^z(x)\big),
\end{aligned}
\]
and the inhomogeneous term $\tilde{F}$ is defined as
\begin{align*}
\tilde{F}(t,x)
=\, & f_0(t,x)\zeta^z(x) + \Big( b^i(t,x)\zeta^z(x)-2a^{ij}(t,x)D_{j}\zeta^z(x) \Big)D_{i}u(t,x)\\
&+ \Big( v^k(t,x)\zeta^z(x)-\sigma^{ik}(t,x)D_{i}\zeta^z(x) \Big)q^k(t,x)- \Big( c(t,x)\zeta^z(x)+a^{ij}(t,x)D_{ij}\zeta^z(x) \Big)u(t,x)\\
&+ \int_Z  \zeta^z(x) \Big[ r(t,e,x+k(t,e,x))-r(t,e,x) \Big]\nu(\mathrm{d} e)\\
&+ \int_Z\zeta^z(x) \Big[ u(t,x+k(t,e,x))-u(t,x)-k^i(t,e,x)D_{i}u(t,x) \Big]\nu(\mathrm{d} e).
\end{align*}
Then all conditions of Lemma \ref{lem:d3} are satisfied with $a_0(t)=a^{ij}(t,z)$ and $\sigma_0(t)=\sigma^i(t,z)$, due to the choice of sufficiently small $\rho$ and the fact that $\tilde{F}\in\mathbb{H}^0$.

By Lemma \ref{lem:d3}, the localized BSPDIE admits a unique strong solution
\[
(u^z,q^z,r^z)\in \big(\mathbb{S}^2(H^1)\cap\mathbb{H}^2\big)\times \mathbb{H}^1\times \mathbb{H}^{\nu,1},
\]
and it follows from the uniqueness of solutions that
\[
u^z=u,\quad q^z=q\quad\text{and}\quad r^z=r\quad \text{on}\ B_{2\rho}(z).
\]

Moreover, the a priori estimate in Lemma \ref{lem:d3} leads to
\begin{align*}\label{eq:local-est}
&\mathbb{E}\int_0^T \Big(
\|u(t,\cdot)\|_{2,B_\rho(z)}^2
+ \|q(t,\cdot)\|_{1,B_\rho(z)}^2
+ \|r(t,\cdot)\|_{\nu,1,B_\rho(z)}^2
\Big)\mathrm{d} t+ \mathbb{E}\sup_{t\leq T}\|u(t,\cdot)\|_{1,B_\rho(z)}^2\nonumber\\
\leq& C\Bigg(
\mathbb{E}\|\varphi\|_{2,B_{2\rho}(z)}^2
+ \mathbb{E}\int_0^T \|\tilde{F}(t,\cdot)\|_{0,B_{2\rho}(z)}^2\mathrm{d} t
\Bigg).
\end{align*}
Using a standard finite covering argument for $\mathbb{R}^d$ with balls $\{B_\rho(z)\}_{z\in\mathbb{R}^d}$ and integrating the above local estimates for all $z\in\mathbb{R}^d$, we derive the global a priori estimate
\[
\begin{aligned}
&\left|\left|\left|u\right|\right|\right|_2^2 + \left|\left|\left|q\right|\right|\right|_1^2 + \left|\left|\left|r\right|\right|\right|_{\nu,1}^2+ \mathbb{E}\Big[\sup_{0\leq t\leq T}\|u(t)\|_{1}^2\Big]\\
\leq& C\Big(
\left|\left|\left|f_0\right|\right|\right|_0^2 + \mathbb{E}\|\varphi\|_1^2
+ \left|\left|\left|u\right|\right|\right|_1^2 + \left|\left|\left|q\right|\right|\right|_0^2 + \left|\left|\left|r\right|\right|\right|_{\nu,0}^2
\Big).
\end{aligned}
\]
The above estimate, together with the preliminary energy inequality \eqref{eq:energy-ineq} in Step 1, implies that
\begin{align}\label{mz1}
\left|\left|\left|u\right|\right|\right|_2^2 + \left|\left|\left|q\right|\right|\right|_1^2 + \left|\left|\left|r\right|\right|\right|_{\nu,1}^2+ \mathbb{E}\Big[\sup_{0\leq t\leq T}\|u(t)\|_{1}^2\Big]\leq C\Big(
\left|\left|\left|f_0\right|\right|\right|_0^2 + \mathbb{E}\|\varphi\|_1^2
+ \left|\left|\left|u\right|\right|\right|_1^2\Big).
\end{align}
Note that for $t\in[0,T)$, we know $\mathbb{E}\|u(t)\|_{1}^2\leq C\Big(
\left|\left|\left|f_0\right|\right|\right|_0^2 + \mathbb{E}\|\varphi\|_1^2
+ \int_t^T\mathbb{E}\left|\left|u(s)\right|\right|_1^2ds\Big)$. An application of Gronwall's inequality leads to
$\left|\left|\left|u\right|\right|\right|_1^2\leq C\Big(
\left|\left|\left|f_0\right|\right|\right|_0^2 + \mathbb{E}\|\varphi\|_1^2\Big)$.
Consequently, \eqref{eq:3.29} follows immediately from \eqref{mz1}.
\end{proof}

%\section*{Proof of Theorem \ref{thm:b2}}
Now we are ready to prove Theorem \ref{thm:b2}.
\begin{proof}[Proof of Theorem \ref{thm:b2}]
The uniqueness of the strong solution to BSPDIE \eqref{eq:3.2} follows immediately from the a priori estimate \eqref{eq:3.29}. %, which implies that if there exist two solutions, their difference must vanish almost surely.

We turn to the proof of existence by the method of continuation. First we introduce some operators which interpolate between a simplified version of the original BSPDIE and the concerned equation. For this, define
%the linear second-order differential operators \(L_0\) and \(L_1\), the first-order stochastic operators %\(B_0\) and \(B_1\), and the nonlocal jump operator \(M\) as follows:
\begin{eqnarray}\label{mz2}
\left\{
\begin{aligned}
L_0 u=& a^{ij}(t,0)D_{ij} u + b^i(t,0) D_{i} u + c(t,0) u \\
&+ \int_Z \left[ u(t,x+k(t,e,0)) - u - k^i(t,e,0) D_{i} u \right] v(de), \\
L_1 u=& a^{ij}(t,x)D_{ij} u + b^i(t,x) D_{i} u + c(t,x) u\\
 &+ \int_Z \left[ u(t,x+k(t,e,0)) - u - k^i(t,e,0) D_{i} u \right] v(de), \\
B_0 q=& \sigma^i(t,0)D_{i} q + v(t,0) q, \\
B_1 q=& \sigma^i(t,x)D_{i} q + v(t,x) q, \\
M r=& \int_Z \left[ r(t,e,x+k(t,e,x)) - r(t,e,x) \right] \nu(de).
\end{aligned}
\right.
\end{eqnarray}
Then for a parameter \(\lambda \in [0,1]\), the interpolated operators are introduced as follows
\[
L_\lambda := (1-\lambda)L_0 + \lambda L_1\ \ \ \text{and}\ \ \ B_\lambda := (1-\lambda)B_0 + \lambda B_1.
\]
Consider a parameterized BSPDIE
\[
\begin{cases}
du = -\left( L_\lambda u + B_\lambda q + M r + f_0 \right)dt + qdW_t + \displaystyle\int_Z r\,\tilde{\mu}(de,dt), \\
u|_{t=T} = \varphi.
\end{cases}
\]
Since for any \(\lambda \in [0,1]\), the coefficients satisfy the conditions of Theorem \ref{thm:b2} with unified parameters,
the a priori estimate \eqref{eq:3.29} holds with the same constant \(C > 0\) independent of \(\lambda\).

Suppose that the above parameterized BSPDIE admits a unique strong solution for some \(\lambda_0 \in [0,1]\). For any \(\lambda\) close to \(\lambda_0\), we can rewrite the parameterized BSPDIE in a form like
\[
\begin{cases}
du = -\big[ L_{\lambda_0} u + B_{\lambda_0} q + M r + f_0 + (\lambda - \lambda_0)\left( (L_1 - L_0)u + (B_1 - B_0)q \right) \big] dt \\
\qquad\  + qdW_t + \displaystyle\int_Z r\,\tilde{\mu}(de,dt), \\
u|_{t=T} = \varphi.
\end{cases}
\]
For a given triple \((\bar{u}, \bar{q}, \bar{r}) \in \mathbb{S}^2(H^1) \cap \mathbb{H}^2 \times \mathbb{H}^1 \times \mathbb{H}^{\nu,1}\), define a mapping \(\Psi: (\bar{u}, \bar{q}, \bar{r}) \mapsto (u, q, r)\) by setting \((u, q, r)\) to be the strong solution to BSPDIE
\[
\begin{cases}
du = -\left[ L_{\lambda_0} u + B_{\lambda_0} q + M r + f_0 + (\lambda - \lambda_0)\left( (L_1 - L_0)\bar{u} + (B_1 - B_0)\bar{q} \right) \right]dt \\
\qquad\ + qdW_t + \displaystyle\int_Z r\,\tilde{\mu}(de,dt), \\
u|_{t=T} = \varphi.
\end{cases}
\]
By the a priori estimate \eqref{eq:3.29} again, together with the boundedness of operators in \eqref{mz2}, for any \((\bar{u}_1, \bar{q}_1, \bar{r}_1), (\bar{u}_2, \bar{q}_2, \bar{r}_2)\in\mathbb{S}^2(H^1) \cap \mathbb{H}^2 \times \mathbb{H}^1 \times \mathbb{H}^{\nu,1}\), we have
\[
\begin{aligned}
&||| \Psi(\bar{u}_1 - \bar{u}_2) |||_2^2 + ||| \Psi(\bar{q}_1 - \bar{q}_2) |||_1^2 + ||| \Psi(\bar{r}_1 - \bar{r}_2) |||_{\nu,1}^2 + \mathbb{E} \sup_{t \leq T} \|\Psi(\bar{u}_1 - \bar{u}_2)(t)\|_1^2 \\
\leq& C |\lambda - \lambda_0|^2 \left( ||| \bar{u}_1 - \bar{u}_2 |||_2^2 + ||| \bar{q}_1 - \bar{q}_2 |||_1^2 + ||| \bar{r}_1 - \bar{r}_2 |||_{\nu,1}^2 \right),
\end{aligned}
\]
where \(C\) is a positive constant depending on given parameters. Hence, the mapping \(\Psi\) becomes a contraction in the space $\mathbb{S}^2(H^1) \cap \mathbb{H}^2 \times \mathbb{H}^1 \times \mathbb{H}^{\nu,1}$ if we take sufficiently small \(|\lambda - \lambda_0|\). In particular, this mapping is contractive if \(|\lambda - \lambda_0| \leq \frac{1}{2C}\).

By the fixed-point theorem, the equation is solvable in a neighborhood of \(\lambda_0\). Since the case \(\lambda = 0\) corresponds to a linear BSPDIE with frozen coefficients, whose existence of strong solution is already guaranteed by Proposition \ref{prop:c1}, we can initiate the continuation from \(\lambda = 0\). By dividing the interval \([0,1]\) into finitely many subintervals with a size less than $\frac{1}{2C}$, we iteratively construct a sequence of strong solutions for finitely many \(\lambda \in [0,1]\) up to $\lambda=1$. Actually, BSPDIE \eqref{eq:3.2} appears as $\lambda=1$. In this way, the existence of strong solution to BSPDIE \eqref{eq:3.2} is proved.
\end{proof}

Furthermore, as the regularity of the coefficients increases, the regularity of the solution increases correspondingly.
\begin{thm}\label{thm:3.6}
Assume that Assumptions \ref{ass:b1}--\ref{ass:b3} hold. If for a nonnegative integer $n$ and any multi-index $\alpha$ such that $|\alpha| \leq n$, we have
  \begin{equation*}\label{con:b1}
\begin{aligned}
    \mathop{\mathrm{ess}\sup}\limits_{\Omega\times[0,T]\times \mathbb{R}^d} &\left( |D^{\alpha}a| + |D^{\alpha}b| + |D^{\alpha}c| + |D^{\alpha}\sigma| + |D^{\alpha}\nu| \right)+\mathop{\mathrm{ess\,sup}}\limits_{\Omega\times[0,T]\times Z \times \mathbb{R}^d} |D^\alpha k(x,e)| \leq L,\\
    &\ \ \ \ \ \ \ \ \ \ \ \ \ f_0\in  \mathbb{H}^n\ \ \text{and}\ \ \varphi \in L^2(\Omega, \mathscr{F}_T; H^{n+1}),
\end{aligned}
  \end{equation*}
then BSPDIE \eqref{eq:3.2} admits a unique strong solution $(u, q, r)\in \mathbb{S}^2(H^{n+1}) \cap \mathbb{H}^{n+2}\times \mathbb{H}^{n+1}\times\mathbb{H}^{\nu,n+1}$ satisfying
  \begin{equation}\label{eq:3.33}
    \begin{split}
    \mathbb{E}\left[\sup_{0 \leq t \leq T} \| u(t) \|_{n+1}^2 \right] + ||| u |||_{n+2}^2 + ||| q |||_{n+1}^2 + ||| r |||_{\nu, n+1}^2\leq C \left( ||| f_0 |||_{n}^2 + \mathbb{E} \| \varphi \|_{n+1}^2 \right),
    \end{split}
  \end{equation}
  where \( {{C = C(L,\kappa,\alpha,\delta,T)}} \) is a positive constant.
\end{thm}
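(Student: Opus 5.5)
We argue by induction on $n$; the case $n=0$ is exactly Theorem \ref{thm:b2}. Assume the statement holds for $n-1$ (with $n\ge1$), and let the coefficients satisfy the bounds up to order $n$, with $f_0\in\mathbb{H}^n$ and $\varphi\in L^2(\Omega,\mathscr{F}_T;H^{n+1})$. By the induction hypothesis there is a unique strong solution $(u,q,r)\in \mathbb{S}^2(H^{n})\cap\mathbb{H}^{n+1}\times\mathbb{H}^{n}\times\mathbb{H}^{\nu,n}$ satisfying \eqref{eq:3.33} with $n$ replaced by $n-1$. It remains to gain one more derivative. For that we differentiate the equation formally, so that for each $l=1,\dots,d$ the triple $(D_lu,D_lq,D_lr)$ should solve a BSPDIE of the same form \eqref{eq:3.2}, with the same principal coefficients $a,\sigma$, the same lower-order coefficients, and the same jump operators. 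The new terminal value is $D_l\varphi\in H^n$, and the new inhomogeneous term is
\begin{align*}
F_l := {}& D_lf_0+D_la^{ij}D_{ij}u+D_lb^iD_iu+D_lc\,u+D_l\sigma^iD_iq+D_lv\,q\\
&+\int_Z\Big[D_lk^m\big(D_mu(\cdot+k)-D_mu\big)-D_lk^iD_iu\Big]\nu(de)+\int_Z D_lk^m D_mr(\cdot+k)\,\nu(de),
\end{align*}
which comes from the chain rule $D_l[u(x+k(x))]=(D_lu)(x+k)+D_lk^m(D_mu)(x+k)$. Every term of $F_l$ involves at most $n+1$ derivatives of $u$ and at most $n$ derivatives of $q,r$, multiplied by derivatives of coefficients of order at most $n$.

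We then apply the induction hypothesis at level $n-1$ to this differentiated equation; its coefficients still satisfy the order-$(n-1)$ bounds. This requires $F_l\in\mathbb{H}^{n-1}$ with
\[
|||F_l|||_{n-1}^2\le C\big(|||f_0|||_n^2+|||u|||_{n+1}^2+|||q|||_n^2+|||r|||_{\nu,n}^2\big).
\]
For the jump terms we use the change of variables $y=x+k$ together with Assumption \ref{ass:b2}(iii) and the bounds on $D^\alpha k$, exactly as in the proof of Theorem \ref{thm:3.1}, to control $\|w(\cdot+k)\|_{m}$ by $C\|w\|_m$ for $m\le n$. The induction hypothesis then yields a strong solution $(u_l,q_l,r_l)$ in $\mathbb{S}^2(H^n)\cap\mathbb{H}^{n+1}\times\mathbb{H}^n\times\mathbb{H}^{\nu,n}$. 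Combining its estimate with the level-$(n-1)$ estimate for $(u,q,r)$ gives
\[
\mathbb{E}\Big[\sup_{0\le t\le T}\|u_l(t)\|_n^2\Big]+|||u_l|||_{n+1}^2+|||q_l|||_n^2+|||r_l|||_{\nu,n}^2\le C\big(|||f_0|||_n^2+\mathbb{E}\|\varphi\|_{n+1}^2\big).
\]

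The key step is the identification $(u_l,q_l,r_l)=(D_lu,D_lq,D_lr)$, and it is also the main obstacle, since $(u,q,r)$ is not yet known to be differentiable enough for the formal computation to be rigorous. We plan to handle it by duality and weak uniqueness. Test the equation for $(u,q,r)$ against $-D_l\eta$ with $\eta\in C_0^\infty$ and integrate by parts. This shows that $(D_lu,D_lq,D_lr)$, viewed in the spaces $\mathbb{H}^{n}\times\mathbb{H}^{n-1}\times\mathbb{H}^{\nu,n-1}$, which for $n\ge1$ are at least $\mathbb{H}^1\times\mathbb{H}^0\times\mathbb{H}^{\nu,0}$, is a weak solution in the sense of Definition \ref{def:3.1} of the differentiated equation written in divergence form. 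The non-divergence terms $a^{ij}D_{ij}$ and $\sigma^iD_i$ are rewritten by moving $D_ia^{ij}$ and $D_i\sigma^i$ into lower-order terms, which is legitimate because these derivatives are bounded when $n\ge1$. The jump terms need only the chain rule for $H^1$ functions composed with the bi-Lipschitz-type map $x\mapsto x+k$, and that map is controlled by (iii). Since $(u_l,q_l,r_l)$ is also a weak solution of the same equation, the uniqueness in Theorem \ref{thm:3.1} (with $f_0$ replaced by $F_l\in\mathbb{H}^{-1}$) gives the identification.

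Summing over $l$ and adding the lower-order estimate then yields \eqref{eq:3.33}. Uniqueness of the solution follows from Theorem \ref{thm:b2}.
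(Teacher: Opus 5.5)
Your proposal is correct. It shares the paper's overall strategy: induction on $n$ by differentiating the equation, with Theorem \ref{thm:b2} as the base case. The inductive step, however, is organized differently.

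The paper goes from $k$ to $k+1$ by applying $D^\alpha$ with $|\alpha|=k+1$ all at once. It then uses only the base result, Theorem \ref{thm:b2}, for an equation that keeps just the principal part $a^{ij}D_{ij}\bar u+\sigma^iD_i\bar q$. Every lower-order and nonlocal term is frozen into a forcing $\tilde f_0$, including those acting on $D^\alpha u$, $D^\alpha q$, $D^\alpha r$ themselves (such as $b^iD_iD^\alpha u$ or $(D^\alpha r)(\cdot+k)$). This forcing only needs to lie in $\mathbb{H}^0$, which the level-$k$ estimate provides.

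You instead differentiate once and keep the full operator, jump operators included, acting on $(D_lu,D_lq,D_lr)$. Only commutator terms go into $F_l$, and the theorem itself at level $n-1$ serves as the black box. The cost is that $F_l$ must be estimated in $\mathbb{H}^{n-1}$ rather than $\mathbb{H}^0$. This needs the same Fa\`a di Bruno and change-of-variables bounds for compositions with $x+k$ that the paper uses.

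What your route gains is a real justification of the step the paper dispatches with ``the interchange \ldots is permissible''. You identify the new strong solution with $(D_lu,D_lq,D_lr)$ through the weak uniqueness of Theorem \ref{thm:3.1}, after rewriting in divergence form. That requires only one integration by parts plus density, boundedness of $Da$ and $D\sigma$ (available because $n\ge1$), and the $H^1$ chain rule for $w(\cdot+k)$.

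One correction is needed. Differentiating the compensated jump term gives only the commutator $D_lk^m\big[(D_mu)(\cdot+k)-D_mu\big]$, but your $F_l$ subtracts $D_lk^iD_iu$ a second time. The extra term is of the same type, so $F_l\in\mathbb{H}^{n-1}$ and its bound are unaffected. With the wrong $F_l$, however, $(D_lu,D_lq,D_lr)$ would not solve the differentiated equation and the identification step would be false, so you must fix it.
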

\begin{proof}
We prove the theorem by induction for the regularity index $n$.

\textbf{Base Case ($n = 0$).} The conditions of Theorem \ref{thm:b2} are satisfied. BSPDIE \eqref{eq:3.2} admits a unique strong solution $(u, q, r) \in \mathbb{S}^2(H^1) \cap \mathbb{H}^2 \times \mathbb{H}^1 \times \mathbb{H}^{\nu,1}$ satisfying the estimate \eqref{eq:3.18} which is actually \eqref{eq:3.33} with $n=0$.

\textbf{Inductive Step.} Assume Theorem \ref{thm:3.6} is true for some $n = k \geq 0$, and hence BSPDIE \eqref{eq:3.2} admits a unique strong solution $(u, q, r) \in \mathbb{S}^2(H^{k+1}) \cap \mathbb{H}^{k+2} \times \mathbb{H}^{k+1} \times \mathbb{H}^{\nu,k+1}$ satisfying the estimate \eqref{eq:3.33}. We prove that it is still true for $n = k+1$ if the conditions of Theorem \ref{thm:b2} hold for  $|\alpha| \leq k+1$.

Take the derivative of spatial variable on both sides of BSPDIE \eqref{eq:3.2}. Since the coefficients have bounded derivatives up to order $k+1$, the interchange of the derivative operator $D^\alpha$ and the integration is permissible. Define new variables
\[
\bar{u} := D^\alpha u, \quad \bar{q} := D^\alpha q, \quad \bar{r} := D^\alpha r.
\]
Then $(\bar{u}, \bar{q}, \bar{r})$ satisfies BSPDIE
\begin{equation}\label{eq:b5}
\begin{aligned}
\bar{u}(t,x) =&D^\alpha \varphi(x) + \int_t^T  \bigg[ a^{ij}(s,x) D_{ij} \bar{u}(s,x) + \sigma^i(s,x) D_i \bar{q}(s,x) + \tilde{f}_0(s,x)  \bigg] ds \\
& - \int_t^T\bar{q}(s,x)dW_s - \int_t^T\int_Z \bar{r}(s,e,x)\tilde{\mu}(de,ds),
\end{aligned}
\end{equation}
where
\[
\begin{aligned}
\tilde{f}_0 ={}& D^\alpha f_0 + \sum_{\substack{|\beta|+|\gamma|=|\alpha| \\ |\beta|\geq 1}} C_{\alpha\beta} \left[(D^\beta a^{ij})(D^\gamma u_{x^i x^j}) + (D^\beta \sigma^i)(D^\gamma q_{x^i})\right] \\
&+ \sum_{|\beta|+|\gamma|=|\alpha|} C_{\alpha\beta} \left[(D^\beta b^i)(D^\gamma u_{x^i}) + (D^\beta c)(D^\gamma u) + (D^\beta \nu)(D^\gamma q)\right] \\
& - \int_Z D^\alpha u \, \nu(de) - \int_Z \sum_{|\beta|+|\gamma|=|\alpha|} C_{\alpha\beta} (D^\beta k^i(s,e,x))(D^\gamma u_{x^i}) \nu(de) \\
&+ \int_Z D^\alpha \left[ r(s,e, x + k(s,e,x)) - r(s, e, x) \right] \nu(de)+ \int_Z D^\alpha \left[u(s,x + k(s,e,x))\right] \nu(de).
\end{aligned}
\]
Note that  $\tilde{f}_0 \in \mathbb{H}^0$ and satisfies the estimate
\begin{align*}
||| \tilde{f}_0 |||_0^2 &\leq C(K, \kappa, T) \left( ||| D^\alpha f_0 |||_0^2 + ||| u |||_{k+1}^2 + ||| q |||_{k+1}^2 + ||| r |||_{\nu,k+1}^2 \right) \\
&\leq C(K, \kappa, T) \left( ||| f_0 |||_{k+1}^2 + \mathbb{E} \| \varphi \|_{k+1}^2 \right),
\end{align*}
where we used the regularity assumptions on $f_0$ and $\varphi$, and the induction hypothesis in the second inequality.

Now, applying Theorem \ref{thm:b2} to the BSPDIE \eqref{eq:b5}, we have
\[
(\bar{u}, \bar{q}, \bar{r}) \in \mathbb{S}^2(H^1) \cap \mathbb{H}^2, \times \mathbb{H}^1\times \mathbb{H}^{\nu,1}.
\]
In particular, for the multi-index $\alpha$ with $|\alpha| = k+1$, it follows that
\[
({u}, {q}, {r}) \in \mathbb{S}^2(H^{k+2}) \cap \mathbb{H}^{k+3}, \times \mathbb{H}^{k+2}\times \mathbb{H}^{\nu,k+2},
\]
i.e., the solution has the desired regularity for $n = k+1$. Moreover, the estimate \eqref{eq:3.33} follows from the corresponding estimate in Theorem \ref{thm:b2} with $f_0=\tilde{f}_0$.

By mathematical induction, Theorem \ref{thm:3.6} holds for all nonnegative integers.
\end{proof}

\begin{rmk}
When the coefficients are sufficiently smooth, it is natural to expect higher Sobolev regularity of the solution. In particular, for $n$ large enough such that $n > \frac{d}{2}$, the Sobolev embedding $H^{n+2}\subset C^{2,\gamma}$ (for some $\gamma\in(0,1)$) would imply that the strong solution is a classical solution.
%However, a rigorous proof for higher-order regularity of the nonlocal jump term $u(x+k(t,e,x))$ requires a %full Fa\`a di Bruno expansion and commutator estimates, and the method of \cite{ch-ta} relying on %stochastic flow techniques are not directly applicable under our mild regularity assumptions. When the %jump amplitude $k$ is independent of $x$, the difficulty disappears and standard differentiation of the %equation yields an induction argument. For the general $x$-dependent case, we leave the full higher-order %regularity analysis for future work.
\end{rmk}

\section{An Application: Comparison Principle for BSPDIE}

%\subsection{Problem Setting}

Comparison principle plays an important role in the theory and applications of partial differential equations and BSDEs. Since BSPDEs combine features of both PDEs and BSDEs, comparison principles are also fundamental for BSPDE. The comparison principle for strong solutions to BSPDEs  was first established in Ma and Yong \cite{ma-yo2}, laying an important foundation for subsequent applications. After that, Du and Meng \cite{du-me} revisited this issue under weaker conditions. Inspired by this existing literature, we aim to establish the comparison principle for strong solutions to BSPDIEs, and the key point is how to deal with the integro-differential structure arising from jump processes.

We consider the comparison principle for BSPDIE \eqref{eq:3.2} with the generator $(\varphi,f_0)$ replaced by $(\varphi_1,f_{01})$ and $(\varphi_2,f_{02})$, respectively. But an additional sign condition on the $0$-order term is needed.
\begin{ass}[Sign Condition]\label{ass:comp-sign}
For any $(\omega,t,x) \in \Omega \times [0,T] \times \mathbb{R}^d$, the zero-order coefficient $c(t,x) \leq 0$.
\end{ass}

\begin{ass}\label{ass:b4}
For any $\omega\in\Omega$ and $t\in[0,T]$, the coefficients $a$, $b$, $c$, $\sigma$, $v$ satisfy the following additional conditions:
\begin{itemize}
  \item $a^{ij}(\omega,t,\cdot) \in W^{2,\infty}(\mathbb{R}^d)$ with $\|D^2 a\|_{L^\infty}$ uniformly bounded;
  \item $b^i(\omega,t,\cdot) \in W^{1,\infty}(\mathbb{R}^d)$ with $\|Db\|_{L^\infty}$ uniformly bounded;
  \item $\sigma^i(\omega,t,\cdot) \in W^{1,\infty}(\mathbb{R}^d)$ with $\|D\sigma\|_{L^\infty}$ uniformly bounded;
  \item $c(\omega,t,\cdot), v(\omega,t,\cdot) \in L^\infty(\mathbb{R}^d)$ uniformly bounded.
\end{itemize}
\end{ass}

\begin{ass}[Measure-preserving jump transformation]\label{ass:measure-preserving}
For each $(\omega,t,e)\in\Omega\times[0,T]\times Z$, the mapping $\Psi_{t,e}(x):=x+k(t,e,x)$ is a $C^1$-diffeomorphism from $\mathbb{R}^d$ onto itself and preserves Lebesgue measure, i.e.,
\[
\big|\det\big(I+\partial_x k(t,e,x)\big)\big| = 1.
\]
and moreover, $\partial_x k$ is uniformly bounded.
\end{ass}

%\subsection{Main Result}

\begin{thm}[Comparison principle for BSPDIE]\label{thm:comp}
Assume that Assumptions \ref{ass:b1}, \ref{ass:b2} (i)(ii), \ref{mz4}, \ref{ass:b3} and \ref{ass:comp-sign}--\ref{ass:measure-preserving} hold, and let $(u_1,q_1,r_1)$ and $(u_2,q_2,r_2)$ be two strong solutions to BSPDIE \eqref{eq:3.2} corresponding to the data $(\varphi_1,f_{01})$ and $(\varphi_2,f_{02})$, respectively, with $f_{01},f_{02}\in\mathbb H^0$ and $\varphi_1,\varphi_2\in L^2_{\mathscr F_T}(\Omega;H^1)$. If
\[
\varphi_1(x)\ge\varphi_2(x)\quad\text{for \ }(\omega,x)\in\Omega\times\mathbb R^d,
\]
and
\[
f_{01}(t,x)\ge f_{02}(t,x)\quad\text{for \ }(\omega,t,x)\in\Omega\times[0,T]\times\mathbb R^d,
\]
then for any $t \in [0,T]$,
\[
u_1(t,x) \geq u_2(t,x) \quad \text{a.e.  a.s.}
\]
\end{thm}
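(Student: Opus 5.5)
Set $\bar u:=u_2-u_1$, $\bar q:=q_2-q_1$, $\bar r:=r_2-r_1$, $\bar\varphi:=\varphi_2-\varphi_1\le 0$ and $\bar f:=f_{02}-f_{01}\le0$. By linearity, $(\bar u,\bar q,\bar r)$ is a strong solution of BSPDIE \eqref{eq:3.2} with data $(\bar\varphi,\bar f)$. It therefore suffices to show that $\bar u^+(t):=\max(\bar u(t),0)$ vanishes, i.e. that $E\|\bar u^+(t)\|_0^2=0$ for every $t$. This is the standard Ma--Yong / Du--Meng route: apply an It\^o formula for $\|\bar u^+\|_0^2$ and close with Gronwall.

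\textbf{Step 1: It\^o formula for the positive part.} Take a smooth convex approximation $\psi_\epsilon$ of $(s^+)^2$ with $\psi_\epsilon(s)=0$ for $s\le0$, and apply the It\^o formula for $\int_{\mathbb R^d}\psi_\epsilon(\bar u(t,x))dx$. This is justified because $\bar u\in\mathbb S^2(H^1)\cap\mathbb H^2$, so the drift lies in $\mathbb H^0$ and Lemma \ref{lem:c1} applies pointwise via mollification. Letting $\epsilon\to0$ and taking expectations gives
\begin{align*}
E\|\bar u^+(t)\|_0^2 + E\int_t^T\!\!\int 1_{\{\bar u>0\}}|\bar q|^2dxds + E\int_t^T\!\!\int_Z\!\!\int \big[(\bar u+\bar r)^{+2}-\bar u^{+2}-2\bar u^+\bar r\big]dx\nu(de)ds \\
= E\|\bar\varphi^+\|_0^2+2E\int_t^T\!\!\int \bar u^+\big(\mathcal L\bar u+\mathcal B\bar q+\mathcal M\bar r+\bar f\big)dxds.
\end{align*}
Here $\mathcal L$, $\mathcal B$ and $\mathcal M$ denote the $u$-, $q$- and $r$-parts of the drift. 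We have $\bar\varphi^+=0$, and the term $\bar u^+\bar f\le0$ can be dropped.

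\textbf{Step 2: local terms.} Integrating by parts using $W^{2,\infty}$ of $a$ gives
\[
\int\bar u^+a^{ij}D_{ij}\bar u=-\int a^{ij}D_i\bar u^+D_j\bar u^+-\int D_ia^{ij}\,\bar u^+D_j\bar u^+ .
\]
The drift terms satisfy $\int \bar u^+b^iD_i\bar u=-\tfrac12\int D_ib^i|\bar u^+|^2$, and $c\le0$ makes the zero-order term nonpositive. For the $q$-term, write $\int \bar u^+\sigma^iD_i\bar q=-\int\bar q\,D_i(\sigma^i\bar u^+)$. Since $D\bar u^+$ lives on $\{\bar u>0\}$, Young's inequality together with $|\sigma|^2\le\kappa^2<\alpha$ lets the term $-2\int 1_{\{\bar u>0\}}\bar q\,\sigma^iD_i\bar u^+$ be absorbed by $2\alpha\|D\bar u^+\|^2+\|1_{\{\bar u>0\}}\bar q\|^2$. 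This is exactly the superparabolicity condition $\kappa<\sqrt\alpha$; one must check that $\kappa^2\le\alpha$ suffices for the quadratic form. The remaining pieces are bounded by $C\|\bar u^+\|_0^2$.

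\textbf{Step 3: jump terms (the main obstacle).} Two contributions must be controlled: the nonlocal operator term
\[
\int\!\!\int_Z\bar u^+\big[\bar u(x+k)-\bar u-k^iD_i\bar u\big],
\]
and the cross term $\int\!\!\int_Z\bar u^+[\bar r(x+k)-\bar r]$, against the good jump quadratic variation term on the left. For the first, use $\bar u^+(x)\bar u(x+k)\le\bar u^+(x)\bar u^+(x+k)\le\tfrac12(\bar u^{+2}(x)+\bar u^{+2}(x+k))$. Assumption \ref{ass:measure-preserving} then gives $\int\bar u^{+2}(x+k)dx=\|\bar u^+\|_0^2$, so this term is $\le\nu(Z)\|\bar u^+\|_0^2$. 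For the $k^iD_i\bar u$ part, integrate by parts; this uses the bounded $\partial_xk$. For the $r$-term, the change of variables gives $\int\bar u^+(x)\bar r(x+k)dx=\int\bar u^+(\Psi^{-1}y)\bar r(y)dy$. I would pair this with the left-hand jump term: on $\{\bar u>0\}$ that term is $\ge c\,|\bar r|^2 1_{\{\bar u+\bar r>0\}}$-type quantities. The delicate case is where $\bar u>0$ but $\bar u+\bar r\le0$, where one only has the bound $\bar r\le-\bar u^+$; there the sign of $\bar u^+\bar r$ is favorable. I expect to need a careful pointwise inequality of the form $\bar u^+\bar r(\cdot)\le \epsilon[\text{jump term}]+C_\epsilon\bar u^{+2}$, valid after transporting by $\Psi$. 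I am not fully sure it closes cleanly, and this is the step I would work out first.

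\textbf{Step 4: conclusion.} Collecting the estimates yields
\[
E\|\bar u^+(t)\|_0^2\le C\int_t^TE\|\bar u^+(s)\|_0^2ds .
\]
Gronwall's inequality then gives $\bar u^+\equiv0$, i.e. $u_1\ge u_2$ a.e., a.s., for every $t$.
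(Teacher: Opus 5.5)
\textbf{The gap is in Step 3, and it is structural.} Your overall plan is the paper's: convex regularization of $(s^+)^2$, It\^o's formula, integration by parts for the local terms, absorbing the $\sigma^iD_i\bar q$ cross term via $\kappa<\sqrt\alpha$, then Gronwall. Steps 1, 2 and 4 are fine.

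The problem starts with the bound $\bar u^+(x)\bar u(x+k)\le\bar u^+(x)\bar u^+(x+k)$. It throws away exactly the negative part of $\bar u(x+k)$ that is needed to cancel the cross term $2\bar u^+(x)\bar r(x+k)$.

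Take a point with $\bar u(x)=a>0$, $\bar r(x)=0$, $\bar u(x+k)=b<0$ and $\bar r(x+k)=-b$. Then:
\begin{itemize}
\item the good jump term $(\bar u+\bar r)^{+2}-\bar u^{+2}-2\bar u^+\bar r$ vanishes both at $x$ and at $x+k$;
\item the cross term equals $2a|b|$, which is not controlled by $Ca^2$ or by anything else on your left-hand side.
\end{itemize}
Since $\bar u^+$ near $x$ and $\bar r$ near $\Psi_{t,e}(x)$ are unrelated, no inequality of the form $\bar u^+\bar r(\cdot+k)\le\epsilon[\text{jump term}]+C_\epsilon\bar u^{+2}$ can hold, either pointwise or after integration. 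Your ``delicate case'' $\bar u>0\ge\bar u+\bar r$ looks at a single point, but the obstruction couples $x$ and $x+k$. In the untruncated expression, the $u$-term contributes $2ab=-2a|b|$ and cancels the cross term exactly.

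\textbf{The missing idea is to keep the two nonlocal terms together,} as the paper does in Lemma~\ref{lem:neg-est}. Set $V:=\bar u+\bar r$ and let $h$ be the (regularized) convex function. Use $h'(\bar u)D_i\bar u=D_ih(\bar u)$. Then the jump contributions in the drift, minus the compensator term on the left, combine into
\[
\int_Z\int_{\mathbb R^d}\Big[h'(\bar u(x))\big(V(x+k)-\bar u(x)\big)+h(\bar u(x))-h(V(x))-k^iD_ih(\bar u(x))\Big]dx\,\nu(de).
\]
Convexity gives $h(\bar u(x))+h'(\bar u(x))\big(V(x+k)-\bar u(x)\big)\le h(V(x+k))$. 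So this quantity is at most
\[
\int_Z\int_{\mathbb R^d}\big[h(V(x+k))-h(V(x))\big]dx\,\nu(de)+\int_Z\int_{\mathbb R^d}(D_ik^i)\,h(\bar u)\,dx\,\nu(de).
\]
The first integral is zero by Assumption~\ref{ass:measure-preserving}. The second is at most $C\|\bar u^+\|_0^2$ because $\partial_xk$ is bounded. With this in place of your Step 3, the rest of your argument closes as you describe.
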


%\subsection{Technical Preparation}

To prove Theorem \ref{thm:comp}, we need the estimate for the negative part of solutions.
\begin{lem}[Negative part estimate]\label{lem:neg-est}
Assume that Assumptions \ref{ass:b1}, \ref{ass:b2} (i)(ii), \ref{mz4}, \ref{ass:b3} and \ref{ass:comp-sign}--\ref{ass:measure-preserving} hold, and let $(u, q, r)$ be a strong solution to BSPDIE \eqref{eq:3.2}. If $f_0 \in \mathbb{H}^0$ and $\varphi\in L^2_{\mathscr{F}_T}(\Omega; H^1)$, then there exists a constant $C > 0$ such that for any $t \in [0,T]$,
\[
\mathbb{E} \int_{\mathbb{R}^d} [u(t,x)^-]^2 dx \leq C e^{C(T-t)} \left(\mathbb{E} \int_{\mathbb{R}^d} [\varphi(x)^-]^2 dx + \mathbb{E} \int_t^T \int_{\mathbb{R}^d} [f_0(s,x)^-]^2 dx ds \right),
\]
where $y^- = \max(-y, 0)$ for $y\in\mathbb{R}$.
\end{lem}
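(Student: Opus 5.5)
We will apply an It\^o--Tanaka type formula to $\Phi(u)=\int_{\mathbb R^d}[u(t,x)^-]^2dx$. Since $y\mapsto (y^-)^2$ is $C^1$ with Lipschitz derivative but not $C^2$, we first take a smooth convex approximation $\phi_\varepsilon\in C^2(\mathbb R)$ with $\phi_\varepsilon(y)=0$ for $y\ge0$, $\phi_\varepsilon''\ge0$ bounded, and $\phi_\varepsilon(y)\to (y^-)^2$, $\phi_\varepsilon'(y)\to -2y^-$, $\phi_\varepsilon''\to 2\mathbf 1_{\{y<0\}}$. The strong solution lies in $\mathbb S^2(H^1)\cap\mathbb H^2\times\mathbb H^1\times\mathbb H^{\nu,1}$, so the pointwise-in-$x$ semimartingale expansion of $\phi_\varepsilon(u(t,x))$ is justified. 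Integrating in $x$ and taking expectations gives an identity in three parts: the drift pairing $-\mathbb E\int_t^T\!\int\phi_\varepsilon'(u)\,[\cdots]\,dx\,ds$, the Brownian correction $-\frac12\mathbb E\int\!\int\phi_\varepsilon''(u)q^2$, and the jump correction $-\mathbb E\int\!\int\!\int_Z[\phi_\varepsilon(u+r)-\phi_\varepsilon(u)-\phi_\varepsilon'(u)r]\nu(de)$. The martingale terms vanish in expectation. After letting $\varepsilon\to0$, we obtain an inequality for $\mathbb E\|u(t)^-\|_0^2$ in which the $q$- and $r$-corrections enter with a good (negative) sign.

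We then estimate each drift term with $\phi'(u)=-2u^-$:
\begin{itemize}
\item For the second-order term $2\int u^-a^{ij}D_{ij}u$, we integrate by parts. Using $D_iu^-=-\mathbf 1_{\{u<0\}}D_iu$ it becomes $-2\int a^{ij}D_iu^-D_ju^- -2\int D_ja^{ij}\,u^-D_iu^-$. The first part gives $-2\alpha\|Du^-\|_0^2$ and the second is bounded via $D a\in L^\infty$ (Assumption \ref{ass:b4}).
\item For the term $2\int u^-\sigma^iD_iq$, we integrate by parts to get $-2\int\sigma^iq\mathbf 1_{\{u<0\}}D_iu^- \cdot(-1)$ plus a lower-order term with $D\sigma$. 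Young's inequality bounds it by $\int\mathbf 1_{\{u<0\}}q^2+\kappa^2\|Du^-\|_0^2+C\|u^-\|_0^2$. The $q^2$ piece is absorbed by the Brownian correction, and $\kappa^2<\alpha$ lets $2\alpha\|Du^-\|^2$ absorb the rest.
\item The terms with $b$, $v$ are handled similarly: $b$ is treated by integration by parts with $Db$ bounded, and $v$ via $\mathbf 1_{\{u<0\}}q$ and Young.
\item The term with $c$ gives $2\int c\,u\,u^-=-2\int c (u^-)^2\cdot(-1)$, which is $\le0$ by Assumption \ref{ass:comp-sign}. In any case it is bounded by $C\|u^-\|^2$.
\item For $f_0$, we use $2\int u^-f_0\le 2\int u^-f_0^-\le\|u^-\|^2+\|f_0^-\|^2$, since $u^-f_0\le u^-f_0^-$.
\end{itemize}

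The main obstacle is the nonlocal part: the jump operator on $u$, the term $Mr$, and the jump correction. We will combine them so that they rearrange into a nonpositive quantity plus $C\|u^-\|_0^2$. For the operator on $u$ we use convexity, $-2u^-(x)[u(x+k)-u(x)]\le (u^-(x+k))^2-(u^-(x))^2$, i.e.\ $\phi(u(y))-\phi(u(x))\ge\phi'(u(x))(u(y)-u(x))$. Integrating in $x$ and using measure preservation (Assumption \ref{ass:measure-preserving}), the contribution $\int[(u^-(x+k))^2-(u^-)^2]dx$ vanishes. The first-order compensator piece $2\int u^-k^iD_iu=-\int k^iD_i(u^-)^2$ is integrated by parts and bounded by $C\|\partial_xk\|_\infty\|u^-\|^2$. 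For $Mr$ combined with the jump correction, we first try the bound $2u^-(x)r(x+k)\le (u^-(x)+ \dots)$. Concretely, we use $-\phi'(u)r(x+k)\le \phi(u(x)+r(x+k))$-type convexity, change variables via $\Psi_{t,e}$, and compare with $\phi(u+r)-\phi(u)-\phi'(u)r\ge \mathbf 1_{\{u<0,\,u+r<0\}}r^2$. If an exact cancellation fails, we fall back on Young: $2u^-|r(x+k)|+2u^-|r|\le \theta\int\!\int_Z\mathbf 1\, r^2+C_\theta\nu(Z)\|u^-\|^2$. The restricted $r^2$ is then controlled by the jump correction, which is where the measure-preserving assumption is needed.

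Collecting the bounds gives
\[
\mathbb E\|u(t)^-\|_0^2\le \mathbb E\|\varphi^-\|_0^2+\mathbb E\int_t^T\|f_0^-\|_0^2ds+C\int_t^T\mathbb E\|u(s)^-\|_0^2ds,
\]
and Gronwall's inequality yields the claim.
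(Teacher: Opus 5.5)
There is a genuine gap, and it sits in the nonlocal part. Your local terms are essentially fine once you fix the bookkeeping. With $\phi'(u)=-2u^-$ the drift enters as $-2\int u^-[\cdots]$, so for instance the correct bound is $-u^-f_0\le u^-f_0^-$. In the $\sigma$ cross term you may spend only a fraction $\theta<1$ of the Brownian correction $\int\mathbf{1}_{\{u<0\}}q^2$, because the $D\sigma$ and $v$ terms need some of it; $\kappa^2<\alpha$ leaves room for this.

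\textbf{Why your nonlocal argument fails.} You first handle the $u$-jump operator on its own, via $\phi'(u(x))\big(u(x+k)-u(x)\big)\le\phi(u(x+k))-\phi(u(x))$. You then try to control $Mr$ with the jump correction
$$\phi(u+r)-\phi(u)-\phi'(u)r=((u+r)^-)^2-(u^-)^2+2u^-r.$$
Neither of your two options closes.
\begin{itemize}
\item The convexity route, after subtracting the correction, leaves $\phi(u(x)+r(x+k))-\phi(u(x)+r(x))$. Its $x$-integral does not vanish under measure preservation, because only $r$ is shifted.
\item The Young fallback fails on the local piece. The local piece $2u^-(x)r(x)$ of $Mr$ equals the \emph{whole} correction plus $(u^-)^2-((u+r)^-)^2$. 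On $\{u<0\le u+r\}$ with $r\gg u^-$, the correction is $2u^-r-(u^-)^2$, not $r^2$. So no fraction $\theta<1$ of it suffices, and nothing is left over.
\item The Young fallback also fails on the shifted piece. The term $-2u^-(x)r(x+k(x))$ pairs $u^-$ at $x$ with $r$ at $y=x+k(x)$. After changing variables the indicator is $\mathbf{1}_{\{u(\Psi_{t,e}^{-1}(y))<0\}}$. This is unrelated to the set $\{u(y)<0,\,u(y)+r(y)<0\}$ where the correction equals $r^2$.
\end{itemize}
Concretely, take $u(x)<0$, $u(y)=M>0$ and $r(y)=-M/2$. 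The shifted term then contributes $+Mu^-(x)$, while the correction at $y$ is $0$.

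\textbf{The missing idea: do not split $u$ and $r$.} Set $V:=u+r$. Then the entire nonlocal drift is $\phi'(u(x))\big[V(x+k)-V(x)-k^iD_iu(x)\big]$. Apply convexity \emph{once}, with base point $u(x)$ and evaluated at $V(x+k)$:
$$\phi'(u(x))\big(V(x+k)-u(x)\big)\le\phi(V(x+k))-\phi(u(x)).$$
Subtracting the correction, written as $\phi(V)-\phi(u)-\phi'(u)(V-u)$, gives
\[
\phi'(u)\big[V(x+k)-V(x)-k^iD_iu\big]-\big[\phi(u+r)-\phi(u)-\phi'(u)r\big]\le\phi\big(V(x+k)\big)-\phi\big(V(x)\big)-k^iD_i\phi(u).
\]
Three things then happen:
\begin{itemize}
\item the jump correction on the left-hand side is consumed exactly;
\item measure preservation kills $\int[\phi(V(x+k))-\phi(V(x))]\,dx$;
\item $-\int k^iD_i\phi(u)\,dx=\int D_ik^i\,\phi(u)\,dx\le C\int\phi(u)\,dx$.
\end{itemize}
In the example above this works because $\phi'(u(x))u(x+k)=-2Mu^-(x)$ absorbs $\phi'(u(x))r(x+k)=Mu^-(x)$. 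Your separate convexity step for the $u$-jump discards exactly this negative quantity.
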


\begin{proof}
All integrations by parts over $\mathbb{R}^d$ in this proof are first justified for smooth and compactly supported functions, for which the boundary terms vanish. The general case follows by a standard density argument in Sobolev spaces.

For $\varepsilon>0$, let $h_\varepsilon\in C^2(\mathbb R)$ be a standard convex regularization of $(x^-)^2$ such that $h_\varepsilon(x)=0$ for $x\ge0$, and $0\le h_\varepsilon(x)\le C(x^-)^2$, $h_\varepsilon'(x)\le0$ and $xh_\varepsilon'(x)\ge0$ for any $x\in\mathbb R$. We choose the approximation such that
\[
h_\varepsilon(x)\to (x^-)^2,\qquad
h_\varepsilon'(x)\to -2x^-,\qquad
0\le h_\varepsilon''(x)\le C,
\]
uniformly in $\varepsilon$, and there exists a constant $C_0>0$ independent of $\varepsilon$ such that for any $x\in\mathbb{R}$,
\[
|h_\varepsilon'(x)|^2\le C_0 h_\varepsilon(x),\ \ \ \text{and}\ \ \
|h_\varepsilon'(x)|^2\le C_0 h_\varepsilon(x)h_\varepsilon''(x).
\]
Such a family is obtained, for instance, by smoothing $(x^-)^2$ only on the interval $[-\varepsilon,0]$ and keeping it equal to $x^2$ on $(-\infty,-\varepsilon]$ and to $0$ on $[0,\infty)$.
We apply It\^o's formula for a.e.\ $x\in\mathbb R^d$ to the composition $h_\varepsilon(u(t,x))$. In view of the regularity of the strong solution $u\in\mathbb{S}^2(H^1)\cap\mathbb{H}^2$, this is justified by first mollifying $u$ in the spatial variable, applying the finite-dimensional It\^o formula with jumps %(see, e.g., \cite{gyo-kry})
to $h_\varepsilon(u_n(t,x))$, and then passing to the limit $u_n\to u$ in $L^2(\Omega\times[0,T]\times\mathbb{R}^d)$. % by the Sobolev convergence, the uniform quadratic/linear growth bounds on $h_\varepsilon,h_\varepsilon'$ and  the uniform bound on $h_\varepsilon''$.
Then it turns out that
\[
\begin{aligned}
d\,h_\varepsilon(u(t,x))
=&-h_\varepsilon'(u(t-,x))\Big[
a^{ij}(t,x)D_{ij}u(t,x)+b^i(t,x)D_i u(t,x)+c(t,x)u(t,x) \\
&\qquad\qquad\qquad\ \
+\int_Z\Big(u(t,x+k(t,e,x))-u(t,x)-k^i(t,e,x)D_i u(t,x)\Big)\nu(de) \\
&\qquad\qquad\qquad\ \
+\sigma^i(t,x)D_i q(t,x)+v(t,x)q(t,x)+f_0(t,x) \\
&\qquad\qquad\qquad\ \
+\int_Z\Big(r(t,e,x+k(t,e,x))-r(t,e,x)\Big)\nu(de)
\Big]dt \\
&+h_\varepsilon'(u(t-,x))\,q(t,x)dW_t
+\frac12 h_\varepsilon''(u(t-,x))|q(t,x)|^2dt \\
&+\int_Z\Big[
h_\varepsilon\big(u(t-,x)+r(t,e,x)\big)-h_\varepsilon(u(t-,x))
-h_\varepsilon'(u(t-,x))r(t,e,x)
\Big]\nu(de)dt \\
&+\int_Z\Big[
h_\varepsilon\big(u(t-,x)+r(t,e,x)\big)-h_\varepsilon(u(t-,x))
\Big]\tilde{\mu}(de,dt).
\end{aligned}
\]
Taking integration over $\Omega\times[t,T]\times\mathbb R^d$, we have
\begin{equation}\label{eq:ito-neg}
\begin{aligned}
&\mathbb E\int_{\mathbb R^d} h_\varepsilon(u)dx
+\frac12\mathbb E\int_t^T\int_{\mathbb R^d} h_\varepsilon''(u)|q|^2dxds \\
&+\mathbb E\int_t^T\int_{\mathbb R^d}\int_Z
\Big[h_\varepsilon(u+r)-h_\varepsilon(u)-h_\varepsilon'(u)r\Big]\nu(de)dxds \\
=&
\mathbb E\int_{\mathbb R^d} h_\varepsilon(\varphi)dx
+J_1+J_2+J_3+J_4+J_5,
\end{aligned}
\end{equation}
where
\[
\begin{aligned}
J_1&=
\mathbb E\int_t^T\int_{\mathbb R^d} h_\varepsilon'(u)\,a^{ij}D_{ij}udxds,\\
J_2&=
\mathbb E\int_t^T\int_{\mathbb R^d} h_\varepsilon'(u)\,b^iD_i udxds,\\
J_3&=
\mathbb E\int_t^T\int_{\mathbb R^d} h_\varepsilon'(u)\,c\,udxds,\\
J_4&=
\mathbb E\int_t^T\int_{\mathbb R^d} h_\varepsilon'(u)\,(\sigma^iD_i q+vq+f_0)dxds,\\
J_5&=
\mathbb E\int_t^T\int_{\mathbb R^d} \int_Z h_\varepsilon'(u)
\big[u(\cdot+k)-u-k^iD_i u+r(\cdot+k)-r\big]\nu(de)dxds.
\end{aligned}
\]
Next we estimate $J_1$--$J_5$ one by one.

For $J_1$, the integration by parts gives
\[
\begin{aligned}
J_1
&=
-\mathbb E\int_t^T\int_{\mathbb R^d}
h_\varepsilon''(u)a^{ij}D_i uD_j udxds
-\mathbb E\int_t^T\int_{\mathbb R^d}
h_\varepsilon'(u)(D_i a^{ij})D_j udxds.
\end{aligned}
\]
For the second term on the right hand side of above equality, using the integration by parts again, we obtain
\[
\begin{aligned}
-\mathbb E\int_t^T\int_{\mathbb R^d} h_\varepsilon'(u)(D_i a^{ij})D_j udxds
=&
-\mathbb E\int_t^T\int_{\mathbb R^d} D_j\big(h_\varepsilon(u)\big)(D_i a^{ij})dxds\\
=&
\mathbb E\int_t^T\int_{\mathbb R^d} h_\varepsilon(u)D_{ij} a^{ij}dxds.
\end{aligned}
\]
Therefore,
\[
J_1
=
-\mathbb E\int_t^T\int_{\mathbb R^d}
h_\varepsilon''(u)a^{ij}D_i uD_j udxds
+
\mathbb E\int_t^T\int_{\mathbb R^d}
h_\varepsilon(u)D_{ij}a^{ij}dxds.
\]
By the uniform ellipticity of $a^{ij}$ and the $W^{2,\infty}$-regularity in Assumption \ref{ass:b4}, we obtain
\[
J_1
\le
-\alpha \mathbb E\int_t^T\int_{\mathbb R^d}
h_\varepsilon''(u)|Du|^2dxds
+
\|D^2 a\|_{L^\infty}\,\mathbb E\int_t^T\int_{\mathbb R^d}
h_\varepsilon(u)dxds.
\]

For $J_2$, since $D_i(h_\varepsilon(u))=h_\varepsilon'(u)D_i u$, we have
\[
\begin{aligned}
J_2
&=
\mathbb E\int_t^T\int_{\mathbb R^d} b^i D_i(h_\varepsilon(u))dxds
=
-\mathbb E\int_t^T\int_{\mathbb R^d}
h_\varepsilon(u) D_i b^idxds.
\end{aligned}
\]
Hence, by the $W^{1,\infty}$-regularity of $b$ in Assumption \ref{ass:b4}, we have
\[
J_2
\le
C\mathbb E\int_t^T\int_{\mathbb R^d} h_\varepsilon(u)dxds.
\]

For $J_3$, since $x h_\varepsilon'(x)\ge 0$ for any $x\in\mathbb{R}$ and $c\le 0$, we have
\[
J_3 = \mathbb{E}\int_t^T\int_{\mathbb{R}^d} h_\varepsilon'(u)\,c\,udxds \le 0.
\]

For $J_4$, the integration by parts yields
\[
\mathbb E\int_{\mathbb R^d} h_\varepsilon'(u)\sigma^iD_i qdx
=
-\mathbb E\int_{\mathbb R^d} h_\varepsilon''(u)\sigma^i D_i u qdx
-
\mathbb E\int_{\mathbb R^d} h_\varepsilon'(u)D_i\sigma^i qdx.
\]
We now estimate each term related to $J_4$ one by one. Let $C_b:=\|D\sigma\|_{L^\infty}+\|v\|_{L^\infty}$.
For the terms involving $D_i\sigma^i$ and $v$, due to $|h_\varepsilon'(x)|^2\le C_0 h_\varepsilon(x)h_\varepsilon''(x)$ for any $x\in\mathbb{R}$ and Young's inequality, for any $\eta>0$,
\[
\begin{aligned}
\bigl|h_\varepsilon'(u)D_i\sigma^i q\bigr|
\le C_b\sqrt{C_0}\,\sqrt{h_\varepsilon(u)}\,\sqrt{h_\varepsilon''(u)}\,|q| \le \frac{\eta}{2}\,h_\varepsilon''(u)|q|^2+\frac{C_b^2 C_0}{2\eta}\,h_\varepsilon(u),
\end{aligned}
\]
and the same bound holds for $h_\varepsilon'(u)v q$. Hence for any $\eta>0$,
\[
\left|\mathbb E\int h_\varepsilon'(u)D_i\sigma^i qdx\right|
+
\left|\mathbb E\int h_\varepsilon'(u)v qdx\right|
\le
\eta\,\mathbb E\int h_\varepsilon''(u)|q|^2dx
+
\frac{C_b^2 C_0}{\eta}\,\mathbb E\int h_\varepsilon(u)dx .
\]
For the crossing term involving $Du$ and $q$, by the fact that $|\sigma|^2\le\kappa^2$ and Young's inequality again, %$2ab\le\alpha a^2+\frac{1}{\alpha}b^2$ with $a=\sqrt{h_\varepsilon''(u)}|Du|$, $b=\sqrt{h_\varepsilon''(u)}|\sigma||q|/\alpha$,
it yields that
\[
\begin{aligned}
-\mathbb E\int h_\varepsilon''(u)\sigma^i D_i u qdx
\le \mathbb E\int h_\varepsilon''(u)|\sigma||Du||q|dx \le \frac{\alpha}{2}\mathbb E\int h_\varepsilon''(u)|Du|^2dx
+\frac{\kappa^2}{2\alpha}\mathbb E\int h_\varepsilon''(u)|q|^2dx.
\end{aligned}
\]
Summing these estimates, we obtain for any $\eta>0$,
\[
\begin{aligned}
J_4
\le&
\frac{\alpha}{2}\mathbb E\int_t^T\int_{\mathbb R^d}
h_\varepsilon''(u)|Du|^2dxds
+
\left(\frac{\kappa^2}{2\alpha}+\eta\right)
\mathbb E\int_t^T\int_{\mathbb R^d}
h_\varepsilon''(u)|q|^2dxds \\
&
+ \frac{C_b^2 C_0}{\eta}\,\mathbb E\int_t^T\int_{\mathbb R^d} h_\varepsilon(u)dxds
+\mathbb E\int_t^T\int_{\mathbb R^d} h_\varepsilon'(u)f_0dxds .
\end{aligned}
\]
Now choose $\eta>0$ small enough so that $\frac{\kappa^2}{2\alpha}+\eta<\frac12$. This is possible since $\kappa<\sqrt{\alpha}/2$ implies $\frac{\kappa^2}{2\alpha}<\frac18$, and we may take e.g.\ $\eta=\frac18$. Then after estimating $J_5$ below, the terms involving $|q|^2$ and $|Du|^2$ on the right hand side of the $J_4$ estimate will be absorbed into the corresponding nonnegative terms on the left hand side of \eqref{eq:ito-neg}. 

Finally, we turn to the jump term $J_5$. For any $(\omega,s,e,x)\in \Omega \times [0,T] \times Z\times\mathbb{R}^d$, set $V(x)=V(\omega,s,e,x):=u(s,x)+r(s,e,x)$. Then
\begin{align*}
u(s,x+k(s,e,x))-u(s,x)+r(s,e,x+k(s,e,x))-r(s,e,x)
=V(x+k(s,e,x))-V(x).
\end{align*}
Define
\[
\mathcal Z_\varepsilon:=
\mathbb E\int_t^T\int_{\mathbb R^d}\int_Z
\Big[h_\varepsilon(u+r)-h_\varepsilon(u)-h_\varepsilon'(u)r\Big]\nu(de)dxds.
\]
A direct calculation gives
\[
\begin{aligned}
J_5-\mathcal Z_\varepsilon
=&
\mathbb E\int_t^T\int_{\mathbb R^d}\int_Z
\Big[
h_\varepsilon'(u)\big(V(x+k(s,e,x))-V(x)-k^i(s,e,x)D_i u\big)
\Big]\nu(de)dxds \\
&-\mathbb E\int_t^T\int_{\mathbb R^d}\int_Z
\Big[
h_\varepsilon(V(x))-h_\varepsilon(u)-h_\varepsilon'(u)(V(x)-u)
\Big]\nu(de)dxds.
\end{aligned}
\]
Since $h_\varepsilon$ is convex, for any $x$,
\[
h_\varepsilon(V(x+k(s,e,x))) \ge h_\varepsilon(u(x)) + h_\varepsilon'(u(x))\big(V(x+k(s,e,x))-u(x)\big).
\]
Consequently,
\[
\begin{aligned}
&\big[h_\varepsilon'(u)\big(V(x+k(s,e,x))-V(x)-k^i(s,e,x)D_i u\big)\big]
-\big[h_\varepsilon(V(x))-h_\varepsilon(u)-h_\varepsilon'(u)(V(x)-u)\big]\\
\le &h_\varepsilon(V(x+k(s,e,x)))-h_\varepsilon(V(x))-k^i(s,e,x)D_i h_\varepsilon(u).
\end{aligned}
\]
Integrating over $\mathbb{R}^d\times Z$, we obtain
\[
J_5-\mathcal Z_\varepsilon \le \mathbb E\int_t^T\int_{\mathbb R^d}\int_Z\Big[ h_\varepsilon(V(x+k(s,e,x)))-h_\varepsilon(V(x))-k^i(s,e,x) D_i\big(h_\varepsilon(u)\big)\Big]\nu(de)dxds.
\]
By Assumption \ref{ass:measure-preserving}, for any $(\omega,s,e)\in \Omega \times [0,T] \times Z$,
\[
\int_{\mathbb{R}^d} h_\varepsilon(V(x+k(s,e,x)))dx = \int_{\mathbb{R}^d} h_\varepsilon(V(x))dx.
\]
Hence
\[
\int_{\mathbb{R}^d}\big[h_\varepsilon(V(x+k(s,e,x)))-h_\varepsilon(V(x))\big]dx = 0.
\]
Thus
\[
J_5-\mathcal Z_\varepsilon \le -\mathbb{E}\int_t^T\int_Z\int_{\mathbb{R}^d} k^i(s,e,x)D_i h_\varepsilon(u(s,x))dx\,\nu(de)ds.
\]
By integration by parts,
\[
-\int_{\mathbb{R}^d} k^i(s,e,x)D_i h_\varepsilon(u(s,x))dx = \int_{\mathbb{R}^d} D_i k^i(s,e,x)\,h_\varepsilon(u(s,x))dx.
\]
Noticing from Assumptions \ref{ass:measure-preserving} that $\partial_xk$ is bounded, we obtain
\[
J_5 \le C\,\mathbb{E}\int_t^T\int_{\mathbb{R}^d} h_\varepsilon(u(s,x))dxds+\mathcal Z_\varepsilon.
\]
The term $\mathcal Z_\varepsilon$ also appears on the left hand side of \eqref{eq:ito-neg}, and hence it cancels in the estimate below.

Combining the estimates from $J_1$ to $J_5$, we have
\begin{equation}\label{mz3}
\begin{aligned}
&\mathbb E\int_{\mathbb R^d} h_\varepsilon(u)dx
+(\frac12-\frac{\kappa^2}{2\alpha}-\eta)\mathbb E\int_t^T\int_{\mathbb R^d} h_\varepsilon''(u)|q|^2dxds
+\frac{\alpha}{2}\mathbb E\int_t^T\int_{\mathbb R^d} h_\varepsilon''(u)|Du|^2dxds \\
\le&
\mathbb E\int_{\mathbb R^d} h_\varepsilon(\varphi(x))dx
+
C\mathbb E\int_t^T\int_{\mathbb R^d} h_\varepsilon(u)dxds
+
\mathbb E\int_t^T\int_{\mathbb R^d} h_\varepsilon'(u)f_0dxds.
\end{aligned}
\end{equation}
Moreover, since $h_\varepsilon'(x)\le 0$ for any $x\in\mathbb R$, we have
\[
h_\varepsilon'(u)f_0
=
h_\varepsilon'(u)f_0^+ - h_\varepsilon'(u)f_0^-
\le
-h_\varepsilon'(u)f_0^-.
\]
By Young's inequality, together with $|h_\varepsilon'(x)|^2\le C_0 h_\varepsilon(x)$,
\[
h_\varepsilon'(u)f_0\leq -h_\varepsilon'(u)f_0^-
\le
\frac12|h_\varepsilon'(u)|^2+\frac12|f_0^-|^2
\le
\frac{C_0}{2} h_\varepsilon(u)+\frac12|f_0^-|^2.
\]
Therefore, it yields from \eqref{mz3} that
\[
\mathbb E\int_{\mathbb R^d} h_\varepsilon(u(t,x))dx
\le
\mathbb E\int_{\mathbb R^d} h_\varepsilon(\varphi(x))dx
+
C\mathbb E\int_t^T
\int_{\mathbb R^d} h_\varepsilon(u(s,x))dxds
+
C\mathbb E\int_t^T\int_{\mathbb R^d}|f_0^-(s,x)|^2dxds.
\]
Taking $\varepsilon\rightarrow0$, by the dominated convergence theorem %$0\le h_\varepsilon(x)\le C(x^-)^2$,
we have
\[
\mathbb E\int_{\mathbb R^d} |u^-(t,x)|^2dx
\le
\mathbb E\int_{\mathbb R^d} |\varphi^-(x)|^2dx
+
C\int_t^T
\mathbb E\int_{\mathbb R^d}|u^-(s,x)|^2dxds
+
C\mathbb E\int_t^T\int_{\mathbb R^d}|f_0^-(s,x)|^2dxds,
\]
which leads to the negative part estimate in Lemma \ref{lem:neg-est} by an application of Gronwall's inequality.
\end{proof}

\begin{rmk}
The main difficulty in the comparison principle arises from the nonlocal jump term,
which cannot be handled by standard local estimates. The key idea is to exploit the
convexity of the test function and combine it with a change-of-variables argument
based on the diffeomorphism property of the jump mapping.
\end{rmk}

Now we are ready to prove the comparison principle.
\begin{proof}[Proof of Theorem \ref{thm:comp}]
Set
\[
\bar{u} = u_1 - u_2,\quad
\bar{q} = q_1 - q_2,\quad
\bar{r} = r_1 - r_2,\quad
\bar{\varphi} = \varphi_1 - \varphi_2,\quad
\bar{f} = f_{01} - f_{02}.
\]
By the linearity of BSPDIE \eqref{eq:3.2}, the triple $(\bar{u},\bar{q},\bar{r})$ satisfies the same equation with the generator $(\bar{\varphi},\bar{f})$. The conditions of Theorem \ref{thm:comp} indicate
\[
\bar{\varphi}(x) \ge 0\ \ \ \text{and} \ \ \ \bar{f}(t,x) \ge 0.
\]
and hence
\[
\bar{\varphi}^- = 0\ \ \ \text{and} \ \ \ \bar{f}^- = 0.
\]
By Lemma \ref{lem:neg-est} we have
\[
\mathbb{E} \int_{\mathbb{R}^d} [\bar{u}(t,x)^-]^2 dx
\le
C e^{C(T-t)}
\left(
\mathbb{E} \int_{\mathbb{R}^d} [\bar{\varphi}(x)^-]^2 dx
+
\mathbb{E} \int_t^T \int_{\mathbb{R}^d} [\bar{f}(s,x)^-]^2 dx ds
\right)
= 0.
\]
Consequently, for any $t\in[0,T]$,
\[
\mathbb{E} \int_{\mathbb{R}^d} [\bar{u}(t,x)^-]^2 dx = 0,
\]
which implies
\[
\bar{u}(t,x)^- = 0 \quad \text{a.e.  a.s. }
\]
Therefore, for any $t\in[0,T]$,
\[
u_1(t,x) - u_2(t,x)=\bar{u}(t,x) \ge 0 \quad \text{a.e.  a.s. }
\]
\end{proof}

\begin{rmk}
The comparison principle is a fundamental tool in the theory of partial differential equations and their stochastic counterparts. It plays a key role in stochastic control by ensuring the uniqueness and monotonicity of value functions. In mathematical finance, it also provides a theoretical basis for the consistency of pricing equations and the absence of arbitrage. %In addition, it serves as an essential framework for studying monotonicity and stability properties in models arising from population dynamics.
\end{rmk}

\end{document}